\newcommand{\cqfd}{\hfill \framebox{}}
\newcommand{\CDIPE}{\emph{Eves - extinction case}}
\newcommand{\BS}{\emph{Eves - persistent case}}
\newcommand{\Disc}{\textsf{\small{Discrete}}}
\newcommand{\Dust}{\textsf{\small{Infinite \footnotesize{with}\small\ dust}}}
\newcommand{\Inf}{\textsf{\small{Infinite \footnotesize{no}\small\ dust}}}
\newcommand{\Fin}{\textsf{\small{CDI}}}
\DeclareMathOperator{\Coag}{Coag}
\newcommand{\rmm}{\mathrm{m}}
\newcommand{\rme}{\mathrm{e}}
\newcommand{\tun}{\mathtt{1}}
\newcommand{\tB}{\mathtt{B}}
\newcommand{\tE}{\mathtt{E}}
\newcommand{\tO}{\mathtt{O}}
\newcommand{\tP}{\mathtt{P}}
\newcommand{\tp}{\mathtt{p}}
\newcommand{\tq}{\mathrm{q}}
\newcommand{\rF}{\mathrm{F}}
\newcommand{\rX}{\mathrm{X}}
\newcommand{\bP}{\mathbf{P}}
\newcommand{\bQ}{\mathbf{Q}}
\newcommand{\bbD}{\mathbb{D}}
\newcommand{\bbE}{\mathbb{E}}
\newcommand{\bbN}{\mathbb{N}}
\newcommand{\bbP}{\mathbb{P}}
\newcommand{\bbQ}{\mathbb{Q}}
\newcommand{\bbR}{\mathbb{R}}
\newcommand{\cA}{\mathcal{A}}
\newcommand{\cB}{\mathcal{B}}
\newcommand{\cH}{\mathcal{H}}
\newcommand{\cO}{\mathcal{O}}
\newcommand{\cP}{\mathcal{P}}
\newcommand{\cS}{\mathcal{S}}
\newcommand{\ccB}{\mathscr{B}}
\newcommand{\ccE}{\mathscr{E}}
\newcommand{\ccF}{\mathscr{F}}
\newcommand{\ccG}{\mathscr{G}}
\newcommand{\ccH}{\mathscr{H}}
\newcommand{\ccM}{\mathscr{M}}
\newcommand{\ccP}{\mathscr{P}}
\begin{document}



\section{Introduction}
The $\Lambda$-coalescent has been introduced by Pitman~\cite{Pitman99} and Sagitov~\cite{Sagitov99}. This is a Markov process $(\Pi_t,t\geq 0)$ with values in the set $\ccP_\infty$ of partitions of $\bbN:=\{1,2\ldots\}$ whose distribution is characterised by a finite measure $\Lambda$ on $[0,1]$. For any $n\geq 1$ the restriction $(\Pi_t^{[n]},t\geq 0)$ of the process to the set $\ccP_n$ of partitions of $[n]:=\{1,\ldots,n\}$ is Markov and can be described as follows. If $\Pi_t^{[n]}$ has $m$ blocks at a given time $t\geq 0$, then any $k$ of them merge at rate
\begin{equation}\label{EqLambdank}
\lambda_{m,k}:=\int_{[0,1]}x^{k}(1-x)^{m-k}x^{-2}\Lambda(dx)
\end{equation}
for every integer $k \in \{2,\ldots,m\}$. If we assume that $\Pi_0$ is the trivial partition $\tO_{[\infty]}\!:=\!\{\{1\},\{2\},\ldots\}$ then the process $(\Pi_t,t\geq 0)$ can be interpreted as the genealogy of an infinite population: each individual is represented by an integer so that the coalescence of a collection of blocks corresponds to groups of individuals finding their most recent common ancestor backward in time.\\
The $\Lambda$-coalescent is in duality (see Lemma 5 of Bertoin and Le Gall~\cite{BertoinLeGall-1}) with the so-called $\Lambda$-Fleming-Viot process which, on the contrary, describes the evolution forward in time of an infinite population. This Markov process has been introduced by Bertoin and Le Gall (see Theorem 3 in~\cite{BertoinLeGall-1}), and implicitly by Donnelly and Kurtz~\cite{DK99}. Let us provide a rigorous definition of the $\Lambda$-Fleming-Viot process. Let $n$ be an integer, let $f$ be a continuous function on $[0,1]^n$ and set
\begin{align*}
G_f(\mu) &= \int_{[0,1]^n} \mu(dx_1)\ldots\mu(dx_n)\, f(x_1,\ldots,x_n)\;,\;\;\;\;\forall \mu\in\ccM_1,\\
LG_f(\mu) &= \!\!\!\sum_{\substack{K\subset [n]\\ k:=\# K \geq 2}}\!\!\!\lambda_{n,k}\int_{[0,1]^n} \mu(dx_1)\ldots\mu(dx_n)\Big(f\big(R_K(x_1,\ldots,x_{n-k+1})\big) - f(x_1,\ldots,x_n)\Big),
\end{align*}
with $R_K(x_1,\ldots,x_{n-k+1})=(y_1,\ldots,y_n)$ where $y_i=x_{\min K}$ if $i\in K$, and the $y_i, i\notin K$, listed by increasing indices, are the values $x_1,\ldots,x_{\min K -1},x_{\min K +1},\ldots,x_{n-k+1}$.
\begin{definition}(Bertoin and Le Gall~\cite{BertoinLeGall-1})
The $\Lambda$-Fleming-Viot process is the unique $\ccM_1$-valued process $(\rho_t,t\geq 0)$ that starts from the Lebesgue measure and such that for every integer $n\geq 1$ and every continuous function $f$ on $[0,1]^n$, the process $G_f(\rho_t)-\int_0^t LG_f(\rho_s)ds$ is a martingale.
\end{definition}
\noindent In this population model, each individual possesses a genetic type, taken to be an element of $[0,1]$. The process $\rho$ describes how the frequencies of these types evolve in time. The "jumps" of this process occur at rate $\nu(dx):=x^{-2}\Lambda(dx)$ and can be interpreted as \textit{reproduction events} where a parent, uniformly chosen among the population, reproduces: a fraction $x$ of individuals dies out and is replaced by individuals with the same type as the parent. The duality with the $\Lambda$-coalescent can be thought of as follows: each reproduction event induces a coalescence event backward in time.

\smallskip

Pitman~\cite{Pitman99} proposed a construction of the $\Lambda$-coalescent from a Poisson point process, we also refer to Schweinsberg~\cite{Schweinsberg00b} for the construction of the $\Xi$-coalescent. Roughly speaking, this Poisson point process encodes the coalescence events that occur over time. Using this Poisson point process forward in time, it is possible to construct the $\Lambda$-Fleming-Viot process thanks to the so-called \textit{lookdown representation} of Donnelly and Kurtz~\cite{DK99}, we also refer to Birkner et al.~\cite{Birkner09} for a lookdown representation of the $\Xi$-Fleming-Viot process and to a recent work of Gufler~\cite{Gufler14} on a lookdown representation for tree-valued Fleming-Viot processes. We will call the underlying Poisson point process the \textit{lookdown graph} for reasons that will be made clear later on. Another - and seemingly different - approach to construct the $\Lambda$-Fleming-Viot process comes from the stochastic flow of bridges introduced by Bertoin and Le Gall~\cite{BertoinLeGall-1}. The main objective of the present paper is to unify these two constructions, this is achieved in three main steps. First we propose a new formulation of the lookdown representation that relies on the introduction of an object called the \textit{stochastic flow of partitions}. Second we investigate the asymptotic behaviour of the $\Lambda$-Fleming-Viot process, and introduce the notion of \textit{Eves} that generalise the primitive Eve of Bertoin and Le Gall~\cite{BertoinLeGall-1}. We stress that this study is new and interesting in its own right. Third, we use the Eves and the stochastic flow of partitions to define the lookdown representation pathwise from the stochastic flow of bridges.

\subsection{The lookdown representation via the flow of partitions}
In the lookdown representation of Donnelly and Kurtz~\cite{DK99}, the $\Lambda$-Fleming-Viot process is obtained as the process of empirical measures of a countable particle system $(\xi_t(i),t\geq 0),i\geq 1$. At any time $t\geq 0$, $\xi_t(i),i\geq 1$ should be seen as the \textit{types} of a countable collection of individuals alive at time $t$ within the population. This particle system is called the \textit{lookdown process}; it satisfies a consistency property that allows to construct the $n$ first particles and then to add the $(n+1)$-th particle without modifying what has already been defined. We give a brief description of the construction of the $n$ first particles. The \textit{initial types} $\xi_0(i),i\in [n]$ are i.i.d.~uniform$[0,1]$ r.v. To every subset $K\subset\{1,\ldots,n\}$ whose cardinality $k$ belongs to $\{2,\ldots,n\}$ is associated an exponential clock with parameter $\lambda_{n,k}$. When this clock rings, the types of the $n$ first particles are updated as follows. At each level in $K\backslash\{\min K\}$, an offspring of the particle currently at $\min K$ is inserted, while at the same time the levels of all the other particles that were previsouly above $\min K$ are pushed upwards (without changing their order) so that after the update each level is again occupied by exactly one particle (and in the $n$ particle system the particles beyond level $n$ are removed). In such a \textit{reproduction event}, types are inherited and the particle at $\min K$ is called the parent. The total transition rate of the $n$ first particles is equal to
\begin{equation}\label{EqLambdan}
\lambda_n := \sum_{k=2}^n \binom{n}{k} \lambda_{n,k}.
\end{equation}
The consistency of the particle system is a consequence of the fact that the $n$-th particle $(\xi_t(n),t\geq 0)$ does never give its value to any of the $(n-1)$ first particles $(\xi_t(i),t\geq 0),i\in[n-1]$ so that the infinite collection of particles can be defined consistently. This particle system admits a process of empirical measures which is a $\Lambda$-Fleming-Viot process. We refer to Subsection \ref{SubsectionLD} for precise definitions.
\begin{figure}[h!]
\begin{center}
\includegraphics[width=11cm]{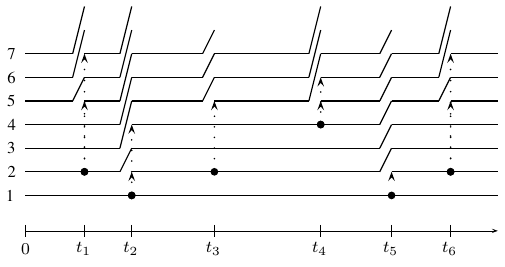}
\caption{A graphical representation of the lookdown graph. Each dot corresponds to a parent that reproduces: the levels carrying an ending arrow together with the level of the parent are now occupied by children of the parent. For example, at time $t_1$, level $2$ reproduces on levels $2$, $5$ and $7$ while former levels $5$, $6$ and $7$ are pushed up to the next available levels. The ancestor of the individual alive at time $t_3$ at level $7$ is given by $A_{t_3}(7)=4$. The corresponding flow of partitions, restricted to $[7]$, would be entirely defined by the partitions $\hat{\Pi}_{t_1-,t_1}=\{\{1\},\{2,5,7\},\{3\},\{4\},\{6\}\}$, $\hat{\Pi}_{t_2-,t_2}=\{\{1,2,4\},\{3\},\{5\},\{6\},\{7\}\}$ and so on.
}\label{Lookdown}
\end{center}
\end{figure}

There exists a graphical representation - called the \textit{lookdown graph} - of the point process of reproduction events; a formal construction (along the lines of~\cite{DK99}) of this object was provided by Pfaffelhuber and Wakolbinger~\cite{PW06} in the so-called Kingman case ($\Lambda=\delta_0(\cdot)$). There, each vertex in the graph corresponds to an individual in the population alive at a given time. We refer to Figure \ref{Lookdown} for an illustration, or to Figure 1 in Birkner et al.~\cite{Birkner09} (which includes the case of simultaneous mergers). With a slight abuse of notation, we will call lookdown graph the point process of reproduction events itself.\\
One contribution of the present paper is to show that the lookdown graph can be encoded by a collection of partitions of $\bbN:=\{1,2,\ldots\}$ that we call a stochastic flow of partitions and that gives an explicit description of the genealogy of the lookdown process. To state the definition of this object, we introduce some notation. The restriction of an element $\pi\in\ccP_\infty$ to the set $\ccP_n$ is denoted $\pi^{[n]}$. We endow $\ccP_\infty$ with the distance $d_{\mathscr{P}}$ defined as follows
\begin{equation}\label{EqDistance}
d_{\mathscr{P}}(\pi,\pi')=2^{-i} \text{ where }i=\sup\{n\geq 1: \pi^{[n]}={\pi'}^{[n]}\}.
\end{equation}
We order the blocks of a partition $\pi$ in the increasing order of their least element and we denote by $\pi(i)$ the $i$-th block according to this ordering, with the convention that $\pi(i)=\emptyset$ if $\pi$ has less than $i$ blocks. Then we define the asymptotic frequency of the $i$-th block
\[ |\pi(i)| := \lim\limits_{n\rightarrow\infty}\frac{1}{n}\#\{\pi(i)\cap[n]\} \]
when this limit exists. For any two partitions $\pi,\pi'$, we introduce the partition $\Coag(\pi,\pi')$ as follows. For every $i\geq 1$, the $i$-th block of $\Coag(\pi,\pi')$ is the union of the blocks $\pi(j)$ for $j\in\pi'(i)$.
\begin{definition}\label{DefStoFoP}
A stochastic flow of partitions is a collection $\hat{\Pi}=(\hat{\Pi}_{s,t},-\infty < s \leq t < \infty)$ of partitions of $\bbN:=\{1,2,\ldots\}$ such that:
\begin{itemize}
\item For every $r < s < t$, $\hat{\Pi}_{r,t} = \Coag(\hat{\Pi}_{s,t},\hat{\Pi}_{r,s})\ a.s.$ (cocycle property),
\item $\hat{\Pi}_{s,t}$ is an exchangeable random partition whose law only depends on $t-s$. Furthermore, for any $s_1 < s_2 < \ldots < s_n$ the partitions $\hat{\Pi}_{s_1,s_2},\hat{\Pi}_{s_2,s_3},\ldots,\hat{\Pi}_{s_{n-1},s_n}$ are independent,
\item $\hat{\Pi}_{0,0} = \tO_{[\infty]} :=\big\{\{1\},\{2\},\ldots\big\}$ and $\hat{\Pi}_{0,t} \rightarrow \tO_{[\infty]}$ in probability as $t \downarrow 0$ for the distance $d_{\mathscr{P}}$.
\end{itemize}
Furthermore if the process $(\hat{\Pi}_{-t,0},t\geq 0)$ is a $\Lambda$-coalescent then we say that $\hat{\Pi}$ is a $\Lambda$ flow of partitions.
\end{definition}
\noindent In Section \ref{SectionFoP} we show a correspondence between lookdown graphs and flows of partitions, see Proposition \ref{PropOneToOne}. This correspondence allows us to propose a new formulation of the lookdown representation that we now present. Let $\xi_0=(\xi_0(i),i\geq 1)$ be a sequence of i.i.d.~uniform$[0,1]$ r.v., referred to as the \textit{initial types}, and let $\hat{\Pi}$ be an independent $\Lambda$ flow of partitions. We define a collection of particles $(\xi_t(i),t\geq 0),i\geq 1$ by setting for every $t > 0$ and every $i,j\geq 1$
\[ \xi_t(i):=\xi_0(j) \;\;\;\text{ if }i\in\hat{\Pi}_{0,t}(j).\]
In other words, $j$ is the index of the block of $\hat{\Pi}_{0,t}$ that contains $i$, and more generally, the partition $\hat{\Pi}_{0,t}$ provides the genealogical relationships between the particles at times $0$ and $t$. Our correspondence between lookdown graphs and flows of partitions then ensures that $(\xi_t(i),t\geq 0),i\geq 1$ is a \textit{lookdown process} so that its process of empirical measures is a $\Lambda$-Fleming-Viot process. Actually we prove that its process of empirical measures coincides almost surely with the $\ccM_1$-valued process $(\ccE_{0,t}(\hat{\Pi},\xi_0),t\geq 0)$ defined by
\begin{equation}\label{EqEmpMeas}
\ccE_{0,t}(\hat{\Pi},\xi_0):= \sum_{i\geq 1}|\hat{\Pi}_{0,t}(i)|\delta_{\xi_0(i)} + \big(1-\sum_{i\geq 1}|\hat{\Pi}_{0,t}(i)|\big)\,\ell
\end{equation}
where $\ell$ stands for the Lebesgue measure on $[0,1]$, see Proposition \ref{Corollary}. Consequently $(\ccE_{0,t}(\hat{\Pi},\xi_0),t\geq 0)$ is a $\Lambda$-Fleming-Viot process. Notice that we require here the a.s.~simultaneous existence of the asymptotic frequencies of all the blocks at all times $t\geq 0$. This a.s.~simultaneous existence holds for a stochastic flow of partitions constructed from a Poisson point process, see Subsection \ref{SubsubsectionStoFoPPoisson}, but it does not necessarily hold for any stochastic flow of partitions: however we prove in Subsection \ref{SubsubsectionStoFoPRegul} that any stochastic flow of partitions admits a modification for which this a.s.~simultaneous existence holds.\\
Observe that the lookdown representation provides much information on the $\Lambda$-Fleming-Viot process. In particular, it shows that at any time $t\geq 0$ the probability measure $\ccE_{0,t}(\hat{\Pi},\xi_0)$ has an atomic component and an absolutely continuous component (with respect to the Lebesgue measure). When this absolutely continuous component is not zero, we say that the measure has \textit{dust}. This terminology comes from the theory of exchangeable random partitions of $\bbN$: if the union of the singleton blocks of an exchangeable random partition $\pi$ admits a positive asymptotic frequency, we say that $\pi$ has dust (see Section 2.1 in Bertoin~\cite{BertoinRandomFragmentation}).

\subsection{The Eves}
We shall assume that $\Lambda(\{1\})=0$ to avoid trivial reproduction events where the whole population is replaced at once by descendants of a parent. For convenience, we introduce the following notation
\begin{equation}\label{EqPsiLambda}
\Psi(u) = \Lambda(\{0\})u^2 + \int_{(0,1)}(e^{-xu}-1+xu)\nu(dx),\;\;\;\forall u\geq 0\;\;\;\;\text{ where }\nu(dx)=x^{-2}\Lambda(dx).
\end{equation}
Let $\rho=(\rho_t,t\geq 0)$ be a $\Lambda$-Fleming-Viot process assumed to be c\`adl\`ag for the weak convergence in $\ccM_1$ (this is a harmless assumption since the semigroup has the Feller property~\cite{BertoinLeGall-1}). For the moment we do not assume that $\rho$ has been obtained from the lookdown representation. The behaviour of the measure $\nu(dx)=x^{-2}\Lambda(dx)$ near $0$ provides much information on the behaviour of the $\Lambda$-Fleming-Viot process as our next result will show. To state this result, let us introduce some notation. When $\nu([0,1)) < \infty$, we say that we are in regime \Disc. When $\nu([0,1)) = \infty$ and $\int_{[0,1)}x\nu(dx) < \infty$, we say that we are in regime \Dust. The case where $\int_{[0,1)}x\nu(dx) = \infty$ and $\int^\infty \frac{du}{\Psi(u)}=\infty$ is called regime \Inf. Finally, when $\int^\infty \frac{du}{\Psi(u)}<\infty$, we say that we are in regime \Fin. This acronym comes from the theory of coalescent processes: a coalescent process starting with an infinity of blocks is said to \textit{Come Down from Infinity} if its number of blocks is finite at any positive time. These four regimes correspond to four distinct behaviours of the $\Lambda$-Fleming-Viot process:
\begin{proposition}\label{PropClassif} If $\Lambda$ belongs to regime
\begin{enumerate}
\item[\textup{1.}] \Disc, then for all $t>0$, almost surely $\rho_t$ has dust and finitely many atoms;
\item[\textup{2.}] \Dust, then for all $t>0$, almost surely $\rho_t$ has dust and infinitely many atoms;
\item[\textup{3.}] \Inf, then for all $t>0$, almost surely $\rho_t$ has no dust and infinitely many atoms;
\item[\textup{4.}] \Fin, then for all $t>0$, almost surely $\rho_t$ has no dust and finitely many atoms.
\end{enumerate}
\end{proposition}
\noindent We rely on the above classification to present our results on the asymptotic behaviour of the $\Lambda$-Fleming-Viot process. Let us recall that Bertoin and Le Gall (see Section 5.3 in~\cite{BertoinLeGall-1}) showed the existence of a r.v.~$\rme^1$ such that almost surely
\[ \rho_t(\{\rme^1\}) \underset{t\rightarrow\infty}{\longrightarrow} 1. \]
The r.v.~$\rme^1$ has a uniform distribution on $[0,1]$ and is called the primitive Eve of the population. It can be interpreted as an ancestor whose progeny is predominant among the population. In the present paper, we investigate the existence of an infinite sequence of Eves $\rme^i,i\geq 1$ that generalise the primitive Eve in the following sense: for every $i\geq 1$, the progeny of $\rme^i$ is overwhelming among the population that does not descend from $\rme^1,\ldots,\rme^{i-1}$. The precise definition is given below.
\begin{definition}\label{DefSequenceEves}In regimes \Disc, \Dust\ and \Inf, we say that $\rho$ admits an infinite sequence of Eves if there exists a collection $(\rme^{i})_{i\geq 1}$ of points in $[0,1]$ such that for every $i\geq 1$
\begin{equation*}
\frac{\rho_t(\{\rme^i\})}{\rho_t([0,1]\backslash\{\rme^1,\ldots,\rme^{i-1}\})} \underset{t\rightarrow \infty}{\longrightarrow} 1.
\end{equation*}
In regime \Fin, we say that $\rho$ admits an infinite sequence of Eves if one can order the atoms by strictly decreasing extinction times; the sequence is then denoted by $(\rme^{i})_{i\geq 1}$.
\end{definition}
\noindent In Proposition \ref{PropMeasurabilityEves} we show that the event where $\rho$ admits an infinite sequence of Eves is measurable in the sigma-field generated by $\rho$ and that, on this event, the Eves are measurable.\\
The following result establishes a connection between the Eves and the lookdown representation.
\begin{proposition}\label{PropEvesLD}
Let $\rho_t=\ccE_{0,t}(\hat{\Pi},\xi_0),t\geq 0$ where $\hat{\Pi}$ is a $\Lambda$ flow of partitions and $\xi_0$ is an independent sequence of i.i.d. uniform$[0,1]$ r.v. Then almost surely $\xi_0(1)$ coincides with the primitive Eve of Bertoin and Le Gall. Furthermore if the $\Lambda$-Fleming-Viot process admits almost surely an infinite sequence of Eves $(\rme^i,i\geq 1)$ then we have almost surely for every $i\geq 1$, $\rme^i=\xi_0(i)$.
\end{proposition}
\noindent Consequently the lookdown representation appears as the appropriate construction to investigate the existence of the Eves. We now present our main results on that question.
\begin{theorem}\label{ThCDI}
If $\Psi$ is a regularly varying function at $+\infty$ with index in $(1,2]$ then almost surely the $\Lambda$-Fleming-Viot process admits an infinite sequence of Eves.
\end{theorem}
\noindent This assumption on $\Psi$ implies that we are in regime \Fin. This is actually equivalent with the assumption that $y\mapsto\Lambda([0,y))$ is regularly varying at $0+$ with index in $[0,1)$ - we refer to Section 0.7 in the book of Bertoin~\cite{BertoinBookLevy96}. Let us observe that this assumption covers a large class of $\Lambda$-Fleming-Viot processes in this regime. One can cite in particular the measures $\Lambda$ associated with the Beta$(2-\alpha,\alpha)$-coalescent with $\alpha\in(1,2)$ (see for instance~\cite{BBS07,Article7}) and the measure $\Lambda(dx)=\delta_0(dx)$ which is associated with the celebrated Kingman coalescent~\cite{Kingman82}. Let us describe our strategy of proof. According to Definition \ref{DefSequenceEves} and Proposition \ref{PropEvesLD}, the existence of the infinite sequence of Eves in regime \Fin\ is equivalent with the non-simultaneous extinction of the initial types $\xi_0(2),\xi_0(3),\ldots$ in the lookdown representation. If we call $\tE$ the event where at least two initial types become extinct simultaneously then we have the following result:
\begin{proposition}\label{PropProbaE}
In regime \Fin, the event $\tE$ has probability $0$ or $1$.
\end{proposition}
\noindent Once this result is established, we obtain Theorem \ref{ThCDI} by showing that $\tE$ has probability zero when $\Psi$ is regularly varying. Concerning the $\Lambda$-Fleming-Viot processes in regime \Fin\ which do not fulfil this assumption, the existence of the Eves remains an open question.
\begin{conjecture}\label{ConjectureCDI}
In regime \Fin, without further condition on $\Lambda$, we have $\bbP(\tE)=0$ and thus, the $\Lambda$-Fleming-Viot process always admits an infinite sequence of Eves.
\end{conjecture}

\smallskip

We now turn to regime \Inf.
\begin{proposition}\label{PropBS}
When $\Lambda(dx)\!=\!dx$, the $\Lambda$-Fleming-Viot process admits almost surely an infinite sequence of Eves.
\end{proposition}
\noindent This result strongly relies on the connection between the Bolthausen-Sznitman coalescent~\cite{BertoinLeGall-0,BolthausenSznitman98} and the Neveu continuous state branching process, we refer to Subsection \ref{SubsectionBS}. The existence of the sequence of Eves for other measures $\Lambda$ in regime \Inf\ remains an open question.
\begin{conjecture}\label{ConjectureRegime3}
In regime \Inf, the $\Lambda$-Fleming-Viot process admits almost surely an infinite sequence of Eves without further condition on $\Lambda$.
\end{conjecture}

\smallskip

Finally we investigate the dust regimes.
\begin{theorem}\label{ThEvesDust}
Suppose that $\Lambda$ belongs to:\begin{itemize}
\item Regime \Disc\, or
\item Regime \Dust\ and fulfils the condition $\int_{[0,1)}x\log\frac{1}{x}\,\nu(dx) < \infty$,
\end{itemize}
then almost surely the $\Lambda$-Fleming-Viot process does not admit an infinite sequence of Eves.
\end{theorem}
\noindent Let us comment briefly on this result. We rely on the lookdown representation. In regime \Disc, we prove that eventually all the reproduction events choose a parent with type $\xi_0(1)$ so that only the frequency of the primitive Eve makes positive jumps. It is then easy to deduce that the second initial type $\xi_0(2)$ does not fix in the remaining population that does not descend from $\xi_0(1)$. In regime \Dust\ under the condition $\int_{[0,1)}x\log\frac{1}{x}\nu(dx) < \infty$, we prove the almost sure existence of at least one initial type, say $\xi_0(i)$ for a certain $i\geq 2$, whose frequency stays equal to $0$ forever. Consequently the $i$-th Eve does not exist. Notice that the $x\log\frac{1}{x}$ condition is fulfilled whenever $\Lambda$ is the law Beta$(2-\alpha,\alpha)$ with $\alpha\in(0,1)$ associated with the Beta$(2-\alpha,\alpha)$-coalescent, see~\cite{Article7}. Let us mention that in~\cite{DuquesneLabbe14}, it is shown that the measure-valued branching process with branching mechanism $\Psi$ (we refer to Dawson~\cite{Dawson93}, Etheridge~\cite{EtheridgeSuperprocesses} or Le Gall~\cite{LeGall99} for a definition of this object) has a residual dust component when $t$ tends to infinity if and only if $\int_{(0,1)}x\log\frac{1}{x}\, \nu(dx) < \infty$. However when this $x\log\frac{1}{x}$ condition is not fulfilled the frequencies in the population when $t$ goes to infinity are of comparable order and the measure-valued branching process does not admit an infinite sequence of Eves. As branching processes and $\Lambda$-Fleming-Viot processes present many similarities~\cite{BBS07,BertoinLeGall-3,Article7}, it is natural to expect the following behaviour:
\begin{conjecture}\label{ConjectureDust}
In regime \Dust\ when $\int_{[0,1)}x\log \frac{1}{x}\,\nu(dx)=\infty$, every initial type of the lookdown representation gets a positive frequency at a certain time. However there does not exist an infinite sequence of Eves.
\end{conjecture}

\subsection{The flow of bridges and the unification}
Let us present another approach to construct the $\Lambda$-Fleming-Viot process. An exchangeable bridge as defined by Kallenberg~\cite{Kallenberg73} is a non-decreasing, right-continuous random process $F:[0,1]\rightarrow[0,1]$ such that $F(0)=0$, $F(1)=1$, and $F$ has exchangeable increments. Bertoin and Le Gall observed that the distribution function of the $\Lambda$-Fleming-Viot process taken at a given time $t\geq 0$, say $F_t$, is an exchangeable bridge. Moreover for all $0 < s < t$, $F_t$ has the same distribution as $F_{t-s}'\circ F_{s}$ where $F_{t-s}'$ is an independent copy of $F_{t-s}$ and $\circ$ stands for the composition operator of real-valued functions. This observation motivates the following definition.
\begin{definition}(Bertoin and Le Gall~\cite{BertoinLeGall-1})\label{DefFoB}
A flow of bridges is a collection of bridges $\rF:=(\rF_{s,t},-\infty < s \leq t < \infty)$ such that :\begin{itemize}
\item For every $r < s < t$, $\rF_{r,t} = \rF_{s,t} \circ \rF_{r,s}\ a.s.$ (cocycle property),
\item The law of $\rF_{s,t}$ only depends on $t - s$. Furthermore, if $s_1 < s_2 < \ldots < s_n$\\
the bridges $\rF_{s_1,s_2},\rF_{s_2,s_3},\ldots,\rF_{s_{n-1},s_n}$ are independent,
\item $\rF_{0,0} = Id$ and $\rF_{0,t} \rightarrow Id$ in probability as $t \downarrow 0$ for the Skorohod topology.
\end{itemize}
Furthermore, if one denotes by $\rho_{s,t}$ the probability measure with distribution function $\rF_{s,t}$ and if $(\rho_{0,t},t\geq 0)$ is a $\Lambda$-Fleming-Viot process, then $\rF$ is called a $\Lambda$ flow of bridges.
\end{definition}
\noindent Observe that the stationarity of the increments of the flow of bridges ensures that the distribution of $(\rho_{s,s+t},t\geq 0)$ does not depend on $s\in\bbR$. Consequently a $\Lambda$ flow of bridges does not only construct one $\Lambda$-Fleming-Viot process, it actually couples an infinite collection of $\Lambda$-Fleming-Viot processes indexed by $s\in\bbR$.\\
We now present the main contribution of the present paper. Consider a measure $\Lambda$ such that the $\Lambda$-Fleming-Viot process admits almost surely an infinite sequence of Eves. Notice that we do not restrict ourselves to the particular cases that we have identified in Proposition \ref{PropBS} and Theorem \ref{ThCDI} and we only assume that Definition \ref{DefSequenceEves} is satisfied. Let $\rF$ be a $\Lambda$ flow of bridges. For every $s\in\bbR$, consider a c\`adl\`ag modification of the $\Lambda$-Fleming-Viot process that we still denote by $(\rho_{s,s+t},t\geq 0)$ for simplicity. We let $\rme_s:=(\rme_s^i,i\geq 1)$ be the sequence of Eves of the $\Lambda$-Fleming-Viot process $(\rho_{s,s+t},t\geq 0)$. As the cocycle property of the flow of bridges expresses a consistency of the collection of $\Lambda$-Fleming-Viot processes, it is natural to look at the relationships between the Eves taken at different times. We introduce the random partitions $\hat{\Pi}_{s,t}$ of $\bbN$ by setting for every $i\geq 1$
\[ \hat{\Pi}_{s,t}(i) := \big\{j\geq 1: \rF^{-1}_{s,t}(\rme_t^j) =\rme^i_s\big\}. \]
where $\rF^{-1}_{s,t}$ is the c\`adl\`ag inverse of $\rF_{s,t}$. This formula means that we put in a same block all the Eves at time $t$ that descend from a same Eve at time $s$.
\begin{theorem}\label{ThFoP}
Suppose that the $\Lambda$-Fleming-Viot process admits almost surely an infinite sequence of Eves. The collection of partitions $(\hat{\Pi}_{s,t},-\infty < s \leq t < \infty)$ is a $\Lambda$ flow of partitions.
\end{theorem}
\noindent In Corollary \ref{CorIndep}, we will also show that the Eves $\rme_s$ at time $s$ are independent from $(\hat{\Pi}_{s+r,s+t},0\leq r \leq t)$.

\smallskip

From the $\Lambda$ flow of partitions $(\hat{\Pi}_{s,t},-\infty < s \leq t < \infty)$ we can define a collection of measure-valued processes using our new formulation of the lookdown representation. Before we proceed to this final part of the introduction, we need to clarify some regularity issues. In Subsection \ref{SubsubsectionStoFoPRegul}, we show the existence of a modification of the flow of partitions such that a.s.~all the partitions of the modified flow admit asymptotic frequencies and the cocycle property holds simultaneously for all triplet of times $r<s<t$. From now on, we will implicitly deal with this modified flow of partitions and we will keep the notation $\hat{\Pi}$ for simplicity. Let us mention that this regularisation does not seem possible for a stochastic flow of bridges. Indeed, a key argument in our proof relies on the continuity of the coagulation operator whereas this property does not hold with the composition operator for bridges.\\
For any given $s\in\bbR$, we introduce the $\ccM_1$-valued process $(\ccE_{s,s+t}(\hat{\Pi},\rme_s),t\geq 0)$ by setting
\[ \ccE_{s,s+t}(\hat{\Pi},\rme_s):= \sum_{i\geq 1}|\hat{\Pi}_{s,s+t}(i)|\delta_{\rme_s^i} + (1-\sum_{i\geq 1}|\hat{\Pi}_{s,s+t}(i)|)\,\ell,\;\;\;\;\forall t\geq 0.\]
Recall that $\ell$ denotes the Lebesgue measure on $[0,1]$. This leads us to the statement of our main result.
\begin{theorem}\label{ThLDFoB}
Suppose that the $\Lambda$-Fleming-Viot process admits almost surely an infinite sequence of Eves. The $\Lambda$ flow of bridges is uniquely decomposed into two random objects: the flow of partitions and the Eves process. More precisely, for all $s \in \mathbb{R}$ almost surely
\begin{equation*}
(\mathscr{E}_{s,s+t}(\hat{\Pi},\rme_s),t\geq 0) = (\rho_{s,s+t},t\geq 0).
\end{equation*}
Furthermore, suppose that we are given a $\Lambda$ flow of partitions $\hat{\Pi}'$ and, for each $s \in \mathbb{R}$, a sequence $\chi_s:=(\chi_{s}(i),i\geq 1)$ of r.v.~taking almost surely distinct values in $[0,1]$. If for every $s\in\bbR$ almost surely $(\mathscr{E}_{s,s+t}(\hat{\Pi}',\chi_{s}),t\geq 0) = (\rho_{s,s+t},t \geq 0)$ then
\begin{enumerate}[(i)]
\item For all $s \in \mathbb{R}$ and all $i\geq 1$, almost surely $\chi_{s}(i) = \rme^{i}_s$,
\item Almost surely $\hat{\Pi}' = \hat{\Pi}$.
\end{enumerate}
\end{theorem}
\noindent This theorem shows that the $\Lambda$ flow of partitions $\hat{\Pi}$ and the process of Eves $(\rme_s,s\in\bbR)$ are sufficient to recover the whole collection of $\Lambda$-Fleming-Viot processes encoded by the flow of bridges. Furthermore these two ingredients are unique. Notice that our construction actually defines a \textit{flow} indexed by $s\in\bbR$ of lookdown processes.
\paragraph{Organisation of the paper.}In Section \ref{SectionFoP} we present our new formulation of the lookdown representation based on flows of partitions. In Section \ref{SectionEves} we study general properties of the Eves and we deal with the measurability issues. In Section \ref{SectionExistence} we prove Theorem \ref{ThCDI}, Proposition \ref{PropBS} and Theorem \ref{ThEvesDust}. In Section \ref{SectionUnification} we prove Theorems \ref{ThFoP} and \ref{ThLDFoB}. Some technical proofs are postponed to Section \ref{SectionAppendix}.

\section{Flows of partitions}\label{SectionFoP}
We start this section with the original definition of the lookdown representation. Then we introduce our formalism based on partitions of integers and show a one-to-one correspondence between lookdown graphs and flows of partitions. Let us mention that stochastic flows of partitions have been independently introduced by Foucart~\cite{Foucart12} to define generalised Fleming-Viot processes with immigration.

\subsection{The lookdown representation}\label{SubsectionLD}
In the lookdown representation, the population is composed of a countable collection of individuals: at any time each individual is located at a so-called \textit{level}, taken to be an element of $\bbN$. This induces at any time an ordering of the population. At time $0$ the individual located at level $i\geq 1$ is called the $i$-th ancestor, his type is denoted by $\xi_0(i) \in [0,1]$. As time passes, \textit{reproduction events} occur in which an individual, called the parent, gives birth to a collection of children and dies out. To give a precise definition we need to introduce some notation. We define $\cS_\infty$ as the subset of $\{0,1\}^\bbN$ whose elements have at least two coordinates equal to $1$. For every $n\geq 2$ we also introduce $\cS_n$ as the subset of $\{0,1\}^\bbN$ whose elements have at least two coordinates equal to $1$ among $[n]$. For a subset $\tp$ of $\bbR\times\cS_\infty$ and an integer $n\geq 2$, let $\tp_{|[s,t]\times\cS_n}$ be the intersection of $\tp$ with $[s,t]\times\cS_n$.
\begin{definition}
A deterministic lookdown graph is a countable subset $\tp$ of $\mathbb{R}\times{\cal S}_{\infty}$ such that for all $n \in \mathbb{N}$ and all $s \leq t$ the set $\tp_{|[s,t]\times{\cal S}_n}$ has a finite number of points. To every point $(t,v) \in \tp$ we associate the set $I_{t,v} := \{i \geq 1: v(i) = 1\}$.
\end{definition}
\noindent A deterministic lookdown graph should be seen as a collection of \textit{reproduction events} $(t,v) \in \mathbb{R}\times{\cal S}_{\infty}$, where $t$ denotes the reproduction time, $\min I_{t,v}$ is the level of the parent and $I_{t,v}$ is the set of levels that participate to the reproduction event.
\begin{definition}\label{DefDetLDProcess}
Let $\tp$ be a lookdown graph and $\xi_0=(\xi_0(i),i\geq 1)$ be a collection of values in $[0,1]$. The deterministic lookdown process constructed from $\tp$ and $\xi_0$ is the particle system $(\xi_t(i),t\geq 0),i\geq 1$ defined as follows:
\begin{itemize}
\item The initial types are given by $(\xi_0(i),i\geq 1)$,
\item At any reproduction event $(t,v) \in \tp$ with $t > 0$ we have
\begin{equation}\label{EquationLookdown}
\begin{cases}
\xi_{t}(i) = \xi_{t-}(\min(I_{t,v}))&\mbox{ \ \ for every }i \in I_{t,v},\\
\xi_{t}(i) = \xi_{t-}(i-(\#\{I_{t,v}\cap[i]\}-1)\vee 0)&\mbox{ \ \ for every }i \notin I_{t,v}.
\end{cases}
\end{equation}
\end{itemize}
\end{definition}
\noindent The transitions should be interpreted as follows. The parent of the reproduction event is located at level $\min(I_{t,v})$, this individual dies at time $t$. At any level $i\in I_{t,v}$ a new individual is born with the type $\xi_{t-}(\min(I_{t,v}))$ of the parent. All the individuals alive at time $t-$ except the parent are then redistributed, keeping the same order, on those levels that do not belong to $I_{t,v}$ (see Figure \ref{Lookdown}). When $v$ does not belong to $\cS_n$, then $\xi_t(i)=\xi_{t-}(i)$ for every $i\in[n]$. The finiteness of the set $\tp_{|[s,t]\times{\cal S}_n}$ for all $s<t$ thus ensures that the $n$ first particles $(\xi_t(i),t\geq 0),i\in[n]$ make only finitely many jumps on any compact interval of time so that they are well-defined processes. Observe the consistency of the particle system when $n$ varies: the $(n+1)$-th particle does not affect the $n$ first.\vspace{4pt}\\

We now explain how one randomises the previous objects so that the lookdown process admits almost surely a process of empirical measures that forms a $\Lambda$-Fleming-Viot process. Let $(\Omega,\ccF,\bbP)$ be a probability space and let ${\cal P}$ be a Poisson point process on $\mathbb{R}\times{\cal S}_{\infty}$ with intensity measure $dt\otimes(\mu_{K}+\mu_{\Lambda})$. The measures $\mu_{K}$ and $\mu_{\Lambda}$ are defined by
\begin{equation}\label{EqMeasureMu}
\mu_{\Lambda}(.) := \int_{(0,1)}\sigma_{x}(.)\,\nu(dx) \;\;,\;\;\mu_{K}(.) := \Lambda(0)\sum_{1\leq i < j}\delta_{s_{i,j}}(.)
\end{equation}
where for all $x\in(0,1)$, $\sigma_{x}(.)$ is the distribution on ${\cal S}_{\infty}$ of a sequence of i.i.d. Bernoulli random variables with parameter $x$, and for all $1\! \leq \!i \!<\! j$, $s_{i,j}$ is the element of ${\cal S}_{\infty}$ that has only two coordinates equal to $1$: $i$ and $j$. The measure $\mu_{\Lambda}$ corresponds to reproduction events involving a positive proportion of individuals while $\mu_K$ corresponds to reproduction events involving only two individuals at once.
\begin{definition}\label{DefLambdaLDGraph}
A lookdown graph associated with the measure $\Lambda$ - or $\Lambda$ lookdown graph in short - is a Poisson point process ${\cal P}$ on $\mathbb{R}\times{\cal S}_{\infty}$ with intensity measure $dt\otimes(\mu_{K}+\mu_{\Lambda})$.
\end{definition}
\noindent Using Formula (\ref{EqLambdan}) we easily get for any $n\geq 2$
\[(\mu_{K}+\mu_{\Lambda})\big(\cS_n\big) = \Lambda(0)\binom{n}{2} + \int_{(0,1)}\sum_{k=2}^{n}\binom{n}{k} x^k(1-x)^{n-k}\,\nu(dx)=\lambda_n.\]
Basic properties of Poisson point processes ensure that $\cP_{|[s,t]\times\cS_n}$ is a Poisson point process on $[s,t]\times\cS_n$ whose intensity has a total mass equal to $(t-s)\lambda_n < \infty$. Consequently for $\bbP$-almost all $\omega$, $\cP(\omega)$ is a deterministic lookdown graph. Let $\xi_0=(\xi_0(i),i\geq 1)$ be an independent sequence of i.i.d. uniform$[0,1]$ r.v. 
\begin{definition}\label{DefLDProcess}
Applying Definition \ref{DefDetLDProcess} to $\cP(\omega)$ and $\xi_0(\omega)$ for $\bbP$-almost all $\omega\in\Omega$, we get a particle system $(\xi_t(i),t\geq 0),i\geq 1$ that we call the lookdown process associated to $\cP$ and $\xi_0$.
\end{definition}
\noindent We let $\ccM_1$ be the set of probability measures on $[0,1]$ endowed with the topology of the weak convergence.
\begin{theorem}\label{ThDK}(Donnelly-Kurtz~\cite{DK99}, Birkner et al.~\cite{Birkner09}) $\bbP$-almost surely the particle system $(\xi_t(i),t\geq 0),i\geq 1$ admits a process of empirical measures
\[ t\mapsto \Xi_t:= \lim\limits_{n\rightarrow\infty}\frac{1}{n}\sum_{i=1}^n\delta_{\xi_t(i)}. \]
This $\ccM_1$-valued process is a c\`adl\`ag $\Lambda$-Fleming-Viot process.
\end{theorem}
\noindent For convenience we give an outline of the proof.
\begin{proof}
Let us introduce for every $n\geq 1$, the process of empirical measures of the $n$ first particles:
\[ t\mapsto\Xi_t^{[n]}:= \frac{1}{n}\sum_{i=1}^n\delta_{\xi_t(i)}.\]
This process takes values in the space $\bbD([0,\infty),\ccM_1)$ of c\`adl\`ag $\ccM_1$-valued functions. Let $f_k,k\geq 1$ be a dense sequence of continuous functions on $[0,1]$. Lemma 3.4 and 3.5 of Donnelly and Kurtz~\cite{DK99} show that for every $k\geq 1$, $\bbP$-a.s.~the sequence of processes $(\int_{[0,1]}f_k(x)\Xi_t^{[n]}(dx),t\geq 0),n\geq 1$ is a Cauchy sequence in $\bbD([0,\infty),\bbR)$ \textit{endowed} with the distance:
\[ d_u(X,Y) = \int_{[0,\infty)}\!\!\!e^{-t}\sup_{s\leq t}1\wedge|X_s-Y_s|\,dt .\]
Subsequently, using the distance
\[ d(m,m') := \sum_{k\geq 1}2^{-k}\bigg(\Big|\int_{[0,1]}f_k(x)m(dx)-\int_{[0,1]}f_k(x)m'(dx)\Big|\wedge 1\bigg) \]
that metrises the topology of the weak convergence on $\ccM_1$, it is a simple matter to check that almost surely $(\Xi_t^{[n]},t\geq 0),n\geq 1$ is a Cauchy sequence in $\bbD([0,\infty),\ccM_1)$ endowed with the distance:
\[ \tilde{d}_u(M,M') = \int_{[0,\infty)}\!\!\!e^{-t}\sup_{s\leq t} d(M_s,M'_s)\,dt .\]
The sequence converges $\bbP$-a.s.~since the latter is a complete space. The identification of the distribution of the limiting process can be carried out by comparing (modulo a simple calculation) the expression of the generator obtained by Donnelly and Kurtz in Section 4~\cite{DK99} with the expression of the generator of the $\Lambda$-Fleming-Viot process obtained by Bertoin and Le Gall in Theorem 3~\cite{BertoinLeGall-1}.
\end{proof}

\subsection{Deterministic flows of partitions}
We start with the definition of the deterministic flow of partitions. Then we prove a one-to-one correspondence between the set of deterministic flows of partitions and the set of deterministic lookdown graphs. Finally we show that the deterministic flow of partitions formalises the implicit genealogy encoded by a deterministic lookdown graph. A key r\^ole will be played by the coagulation operator. Recall that for any $\pi,\pi' \in \ccP_\infty$, $\Coag(\pi,\pi')$ is the element of $\ccP_\infty$ whose $i$-th block is equal to the union of the $\pi(j)$'s for $j\in\pi'(i)$. We extend this notation to the case where $\pi,\pi'\in\ccP_n$ by taking implicitly $\Coag(\pi,\pi')$ as an element of $\ccP_n$ and using the same definition for its blocks.
\begin{definition}\label{DefDetFoP}
A deterministic flow of partitions is a collection $\hat{\pi}=(\hat{\pi}_{s,t},-\infty < s \leq t < \infty)$ of partitions of $\bbN$ that satisfies:\begin{itemize}
\item For all $r < s < t$, we have $\hat{\pi}_{r,t} = \Coag(\hat{\pi}_{s,t},\hat{\pi}_{r,s})$ (cocycle property).
\item For all $s\in\bbR$, $\hat{\pi}_{s,s}=\tO_{[\infty]}$ and the limit $\lim\limits_{r\uparrow s}\hat{\pi}_{r,s}=:\hat{\pi}_{s-,s}$ exists and is either a partition with a unique non-singleton block or $\tO_{[\infty]}$.
\item For all $t\in\bbR$, $\lim\limits_{\substack{r<s<t\\r,s \,\uparrow\, t}}\hat{\pi}_{r,s}$ exists and equals $\tO_{[\infty]}$ (left regularity).
\item For all $s\in\bbR$, $\lim\limits_{t\downarrow s}\hat{\pi}_{s,t}$ exists and equals $\tO_{[\infty]}$ (right regularity).
\end{itemize}
\end{definition}
\begin{remark}
The left regularity is not a consequence of the three other properties. Denote by $\tun_{[\infty]}$ the partition with a unique block containing all the integers. Take $\hat{\pi}_{s,t}=\tun_{[\infty]}$ if $(s,t]\cap\{1-2^{-n},n\geq 0\} \ne \emptyset$ and $\hat{\pi}_{s,t}=\tO_{[\infty]}$ otherwise. This collection of partitions satisfies all the properties except the left regularity: indeed, $\hat{\pi}_{r,s}$ ``oscillates'' between $\tO_{[\infty]}$ and $\tun_{[\infty]}$ as $r,s \uparrow 1$.
\end{remark}
\noindent It is natural to call $\{(s,\hat{\pi}_{s-,s}): \hat{\pi}_{s-,s}\ne \tO_{[\infty]}\}$ the collection of the jumps of $\hat{\pi}$. For convenience, we write this collection of jumps in a slightly different manner. To every partition $\hat{\pi}_{s-,s}$ that differs from $\tO_{[\infty]}$ we associate the element $v_s$ of $\cS_{\infty}$ whose $i$-th coordinate is $1$ if and only if $i$ belongs to the non-singleton block of $\hat{\pi}_{s-,s}$. We then set $J(\hat{\pi}) := \{(s,v_s): \hat{\pi}_{s-,s}\ne \tO_{[\infty]}\}$ for the collection of jumps of $\hat{\pi}$. Similarly for every $n\geq 2$, we set $J_n(\hat{\pi}) := \{(s,v_s): \hat{\pi}_{s-,s}^{[n]}\ne \tO_{[n]}\}$. Notice that $J(\hat{\pi})$ and $J_n(\hat{\pi})$ are subsets of $\bbR\times\cS_\infty$ and $\bbR\times\cS_n$ respectively. 
\begin{lemma}\label{LemmaJpi}
For every $n\geq 2$, the intersection of $J_n(\hat{\pi})$ with any set of the form $[s,t]\times\cS_n$ has finitely many points.
\end{lemma}
\begin{proof}
Suppose that the intersection of $J_n(\hat{\pi})$ with the set $[s,t]\times\cS_n$ has infinitely many points. Necessarily these points accumulate near a certain point in $[s,t]$. For simplicity we assume that they accumulate on the right of $r \in [s,t)$ and we let $(r_1,v_1),(r_2,v_2),\ldots$ be a sequence of these points such that $r_i\downarrow r$ as $i\rightarrow\infty$. By Definition \ref{DefDetFoP}, as $u\uparrow r_i$ the partition $\hat{\pi}_{u,r_i}$ converges to $\hat{\pi}_{r_i-,r_i}$. Since $v_{r_i}\in\cS_n$, $\hat{\pi}_{r_i-,r_i}\ne \tO_{[n]}$ so that there exists $u\in(r_{i+1},r_i)$ such that $\hat{\pi}_{u,r_i}^{[n]}\ne \tO_{[n]}$. Therefore the cocycle property yields $\hat{\pi}_{r_{i+1},r_i}^{[n]}=\Coag(\hat{\pi}_{u,r_i}^{[n]},\hat{\pi}_{r_{i+1},u}^{[n]})$ so that $\hat{\pi}_{r_{i+1},r_i}^{[n]}\ne \tO_{[n]}$. Using the cocycle property once again we get that $\hat{\pi}_{r,r_i}^{[n]}\ne \tO_{[n]}$. Taking the limit as $i\rightarrow\infty$ we deduce that the right regularity at $r$ is not satisfied. If we had assumed that the points accumulate on the left of a given point $r$ then the left regularity at $r$ would have failed.
\end{proof}
We now make the connection between deterministic flows of partitions and deterministic lookdown graphs.
\begin{proposition}\label{PropOneToOne}
The map $J$ that associates to a deterministic flow of partitions $\hat{\pi}$ the collection of its jumps $J(\hat{\pi})$ is a bijection between the set of deterministic flows of partitions and the set of deterministic lookdown graphs.
\end{proposition}
\begin{proof}
Let $\hat{\pi}$ be a deterministic flow of partitions. One can write
\[ J(\hat{\pi}) = \underset{n\geq 2}{\bigcup}J_n(\hat{\pi}).\]
Hence the set on the left is a countable union of countable sets thanks to Lemma \ref{LemmaJpi}, so that it is itself countable. Moreover Lemma \ref{LemmaJpi} shows that the intersection of $J(\hat{\pi})$ with any set of the form $[s,t]\times\cS_n$ has only finitely many points. Consequently $J(\hat{\pi})$ is a deterministic lookdown graph. Let us show that $J$ is injective. Suppose that $J_n(\hat{\pi})\cap\big((s,t]\times\cS_n\big)$ has $k$ points, say $(r_1,v_1),\ldots,(r_k,v_k)$ in the increasing order of the time coordinates. It is easy to check that $\hat{\pi}^{[n]}_{r_{i-1},r_i}$ is the element of $\ccP_n$ with a unique non-singleton block given by $\{j\in[n]: v_i(j)=1\}$ while $\hat{\pi}^{[n]}_{r_{i-1},r_i-}=\tO_{[n]}$. The cocyle property then ensures that the partitions $\hat{\pi}^{[n]}$ are completely determined by the knowledge of $J_n(\pi)$. The injectivity follows.

Let us now turn to surjectivity. Let $\tp$ be a deterministic lookdown graph. For every $n\geq 1$, we construct the restriction of $\hat{\pi}$ to $\ccP_n$ as follows. Fix $s \leq t$. If $s=t$, set $\hat{\pi}^{[n]}_{s,t} := \tO_{[n]}$. If $s < t$, let $(t_1,v_1),\ldots,(t_k,v_k)$ be the finite collection of points in $\tp_{|(s,t]\times\cS_n}$ ranked by increasing time coordinates. We define the map $\phi_n:\cS_n\rightarrow\ccP_n$ as follows. For every $v\in\cS_n$, $\phi_n(v)$ is the element of $\ccP_n$ with a unique non-singleton block equal to $\{i\in[n]: v(i)=1\}$. We then set
\[ \hat{\pi}_{s,t}^{[n]} := \Coag\Big(\phi_n(v_k),\Coag\big(\phi_n(v_{k-1}),\ldots,\Coag(\phi_n(v_2),\phi_n(v_1))\ldots\big)\Big).\]
Our construction is consistent when $n$ varies so that there exists a unique partition $\hat{\pi}_{s,t}$ whose restriction to $[n]$ is given by $\hat{\pi}_{s,t}^{[n]}$, for all $s \leq t$. Let us now check that $\hat{\pi}$ is indeed a deterministic flow of partitions. The cocycle property is a direct consequence of our definition with the coagulation operator. The finiteness of $\tp_{|(s,t]\times\cS_n}$ for every $s\leq t$ and every $n\geq 1$ ensures the existence of $\hat{\pi}_{s-,s}$ for all $s\in\bbR$ along with the left and right regularity. The fact that $\hat{\pi}_{s-,s}$ is either $\tO_{[\infty]}$ or a partition with a unique non-singleton block is a consequence of the fact that we have only coagulated partitions with a unique non-singleton block. From our construction it is immediate that $J(\hat{\pi})=\tp$. The map $J$ is therefore surjective.
\end{proof}
Let us now explain why the concept of deterministic flow of partitions is relevant in our context. Let $\tp$ be a deterministic lookdown graph. For any level $j\geq 1$ taken at a given time $t\geq 0$, one can define its \textit{ancestral line} by tracing its genealogy backward in time until its ancestor at time $0$. This ancestral line starts at $j$ and stays constant between two reproduction events. At a reproduction event, say $(s,v)$ with $s\in(0,t]$, either the ancestral line belongs to $I_{s,v}$ and then it jumps to $\min I_{s,v}$, or the ancestral line does not belong to $I_{s,v}$. In the latter case, the ancestral line jumps from its current position, say $i$, to $i-(\#\{I_{s,v}\cap[i]\}-1)\vee 0$. The value of the ancestral line at $0$ is then the ancestor of level $j$ at time $t$, we denote it by $A_t(j)$. We refer to Figure \ref{Lookdown} for an illustration of these ancestral lines.\\
Denote by $J^{-1}$ the inverse map of $J$ and set $\hat{\pi}:=J^{-1}(\tp)$. The following lemma shows that $\hat{\pi}$ is the genealogical structure implicitly encoded by $\tp$.
\begin{lemma}\label{LemmaGenealogyPi}
For all $i\geq 1$ and all $t\geq 0$, $\hat{\pi}_{0,t}(i) = \{j\geq 1: A_t(j)=i\}$.
\end{lemma}
\noindent In other words, $\hat{\pi}_{0,t}(i)$ is the progeny at time $t$ of the $i$-th ancestor. This allows to state an alternative definition of the lookdown process. Consider a sequence of initial types $\xi_0(i),i\geq 1$. Then the lookdown process of Definition \ref{DefDetLDProcess} satisfies and is characterised by
\[ \forall i,j\geq 1,\;\; j\in\hat{\pi}_{0,t}(i) \Rightarrow \xi_t(j) = \xi_0(i).\]
\begin{proof}
Fix $n\geq 1$. Let $(s_1,v_1),(s_2,v_2),\ldots$ be the elements of $\tp_{|(0,\infty)\times\cS_n}$ ranked by increasing time coordinates. From the definition of the map $J$, it is immediate to check that $t\mapsto\hat{\pi}_{0,t}^{[n]}$ only evolves at times $s_k,k\geq 1$. Set $s_0=0$. We prove by induction on $k$ that for every $i\geq 1$, $\hat{\pi}_{0,s_k}^{[n]}(i) = \{j\in [n]: A_{s_k}(j)=i\}$. At rank $0$ this is trivial. Suppose that it holds at a certain rank $k-1\geq 0$. At rank $k$, we set $I_{s_k,v_k}^{[n]}:=\{j\in[n]: v_k(j) = 1\}$ and for every $j\in[n]$
\[ b(j) := \min I_{s_k,v_k}^{[n]} \;\;\text{ if } j\in I_{s_k,v_k}^{[n]}\;\;\;\;\;,\;\;\;\;\;b(j):=j-(\#\{I_{s_k,v_k}^{[n]}\cap[j]\}-1)\vee 0 \;\;\text{ otherwise}.\]
Then the definition of the ancestral line immediately yields that $A_{s_k}(j) = A_{s_{k-1}}(b(j))$. Let $\pi$ be the partition of $[n]$ whose blocks are given by $\{j\in[n]: A_{s_k}(j)=i\}, i\in[n]$. From the formula above we check that $j$ and $j'$ are in a same block of $\pi$ if and only if $A_{s_{k-1}}(b(j))=A_{s_{k-1}}(b(j'))$ which is equivalent to saying (thanks to the induction hypothesis) that $b(j)$ and $b(j')$ are in a same block of $\hat{\pi}_{0,s_{k-1}}^{[n]}$. Since $\hat{\pi}_{s_{k-1},s_k}^{[n]}=\hat{\pi}_{s_k-,s_k}^{[n]}$, it is elementary to check that $b(j)$ is the index of the block of $\hat{\pi}_{s_{k-1},s_k}^{[n]}$ containing $j$. Since $\hat{\pi}_{0,s_k}^{[n]}=\Coag(\hat{\pi}_{s_{k-1},s_k}^{[n]},\hat{\pi}_{0,s_{k-1}}^{[n]})$ we deduce that $\hat{\pi}_{0,s_k}^{[n]} = \pi$ and the induction is complete. We have proved the asserted equality for the restrictions to $[n]$ for any given $n\geq 1$. The lemma follows.
\end{proof}

\subsection{Stochastic flows of partitions}\label{SubsectionStoFoP}
We now consider the stochastic flow of partitions of Definition \ref{DefStoFoP}. We first propose a Poissonian construction of this object and show that almost surely its trajectories are deterministic flows of partitions. Consequently almost all the trajectories encode a deterministic lookdown graph and can be used to apply the lookdown construction. Second we consider a stochastic flow of partitions not necessarily constructed from a Poisson point process. Nothing ensures that almost all its trajectories are deterministic flows of partitions. However we show the existence of a modification of this stochastic flow of partitions such that almost all its trajectories are a deterministic flow of partitions. The lookdown representation can therefore be applied to the modification. We will need this result for the unification.

\subsubsection{Poissonian construction}\label{SubsubsectionStoFoPPoisson}
Let $(\Omega,\ccF,\bbP)$ be a probability space. Fix a finite measure $\Lambda$ on $[0,1)$ and consider a $\Lambda$ lookdown graph ${\cal P}$ as introduced in Definition \ref{DefLambdaLDGraph}. For $\mathbb{P}$-a.a.~$\omega \in \Omega$, ${\cal P}(\omega)$ is a deterministic lookdown graph so that we can define $\hat{\Pi}(\omega):=J^{-1}(\cP(\omega))$.
\begin{proposition}
The collection of partitions $\hat{\Pi}$ is a $\Lambda$ flow of partitions.
\end{proposition}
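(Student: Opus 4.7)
The plan is to push each axiom of Definition \ref{DefStoFoP} through the pathwise construction of Proposition \ref{FoPCorrespondence}: since $\hat{\Pi}^\cP(\omega)$ is obtained from the atoms of $\cP(\omega)$ by iterated coagulation in time order, every axiom will appear as the pathwise image of a structural property of the Poisson point process $\cP$. The cocycle identity $\hat{\Pi}^\cP_{r,t}=\Coag(\hat{\Pi}^\cP_{s,t},\hat{\Pi}^\cP_{r,s})$ holds simultaneously for all $r<s<t$ on the full-probability event where the lookdown graph is locally finite, as a direct consequence of the associativity of $\Coag$ (Equation \eqref{EqCoagAssociativity}) applied to the iterated-coagulation representation. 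Independence and stationarity of increments come from the time-homogeneity of $\cP$ and from the independence of its restrictions to disjoint time intervals. The initial condition $\hat{\Pi}^\cP_{s,s}=\tO_{[\infty]}$ is built into the construction, and right-continuity in probability at $0$ reduces to the fact that, for each $n$, the probability that $\cP$ has an atom in $(0,\epsilon]$ whose $v$ carries at least two $1$'s in $[n]$ tends to $0$ with $\epsilon$, a routine intensity computation bounded by $\epsilon\bigl(\int_{(0,1)}(1-(1-u)^n-nu(1-u)^{n-1})\,\nu(du)+\binom{n}{2}\Lambda(\{0\})\bigr)$.

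For exchangeability of each $\hat{\Pi}^\cP_{s,t}$, I would invoke the invariance of the spatial intensity $\mu_K+\mu_\Lambda$ under any finite permutation $\sigma$ of $\bbN$ acting coordinatewise on $\cS_\infty$: the measure $\sigma_u$ is the law of an i.i.d.\ Bernoulli$(u)$ sequence, hence exchangeable, and $\mu_K$ is a symmetric sum over unordered pairs. Since the action of $\sigma$ on $\cS_\infty$ commutes with $\mathfrak{f}_\infty$ and is covariant with $\Coag$ in its first argument, the exchangeability of the Poisson input transfers pathwise to each $\hat{\Pi}^\cP_{s,t}$.

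The main work lies in checking that $(\hat{\Pi}^\cP_{-t,0},t\geq 0)$ is a $\Lambda$ coalescent. By the cocycle and the independence and stationarity of increments just established, it is a Markov process on $\mathscr{P}_\infty$ starting from $\tO_{[\infty]}$; restricted to $[n]$ it is a continuous-time Markov chain on the finite set $\mathscr{P}_n$. Fixing a current state with $m$ blocks and a prescribed merger of a specific $p$-subset of them, I would let $j_1<\ldots<j_m$ denote the (random) positions, inside the infinite partition $\hat{\Pi}^\cP_{-t,0}$, of the blocks that meet $[n]$: these are the coordinates at which the sequence $v$ of an atom $(-s,v)$ must equal $1$ in order to realise the prescribed merger. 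Conditional on $u$, the coordinates of $v$ under $\mu_\Lambda$ are i.i.d.\ Bernoulli$(u)$, so the $\mu_\Lambda$-contribution is $\int u^p(1-u)^{m-p}\,\nu(du)$; adding the $\mu_K$-contribution $\Lambda(\{0\})$ when $p=2$ recovers exactly the rate $\lambda_{m,p}$, and compatibility in $n$ then identifies the backward process with the $\Lambda$ coalescent. The only genuine obstacle in this computation is to justify that the remaining coordinates $v^\ell$ for $\ell\notin\{j_1,\ldots,j_m\}$ can be left free without perturbing the transition observed on $[n]$: such coordinates can only cause additional merging of blocks of $\hat{\Pi}^\cP_{-t,0}$ that are disjoint from $[n]$, hence invisible through the restriction to $\mathscr{P}_n$, so the Bernoulli probability $u^p(1-u)^{m-p}$ depends only on $m$ and $p$ and not on the random positions $j_1,\ldots,j_m$. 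Once this indifference to the positions is secured, the remainder is bookkeeping.
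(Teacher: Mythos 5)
Your proof is correct and follows essentially the same route as the paper, which verifies the cocycle property trajectory-wise by construction, notes that independence, stationarity and continuity at $0$ follow from the Poisson structure, and identifies $(\hat{\Pi}^{\cal P}_{-t,0},t\geq 0)$ as a $\Lambda$ coalescent via the standard Poissonian construction of coalescents (delegated in the paper to a citation of Bertoin's book). The only difference is that you spell out the details the paper leaves to that reference, in particular the rate computation $\int u^{p}(1-u)^{m-p}\,\nu(du)$ and the observation that the coordinates of $v$ off the relevant block positions do not affect the restriction to $[n]$.
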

\begin{proof}
The cocycle and continuity properties are satisfied for $\bbP$-a.a.~trajectories by construction. The independence of the increments comes from the independence properties of Poisson point processes. Finally the Poissonian construction of coalescent processes (see Section 4.2.3 of the book of Bertoin~\cite{BertoinRandomFragmentation}) ensures that $(\hat{\Pi}_{-t,0},t\geq 0)$ is a $\Lambda$-coalescent.
\end{proof}
\noindent Let $\xi_0=(\xi_0(i))_{i\geq 1}$ be a sequence of i.i.d.~uniform$[0,1]$ r.v. As remarked after Lemma \ref{LemmaGenealogyPi}, one can construct the lookdown process associated with $\cP$ and $\xi_0$ as follows.
\begin{definition}[Second definition of the lookdown process.]\label{DefLDProcess2} The lookdown process associated with $\hat{\Pi}$ and $\xi_0$ is the unique collection of processes $(\xi_t(i),t\geq 0),i\geq 1$ such that $\bbP$-a.s.~for every integer $i,j \geq 1$ and all $t\geq 0$
\[ j\in\hat{\Pi}_{0,t}(i) \Rightarrow \xi_t(j)=\xi_0(i).\]
This process coincides $\bbP$-a.s.~with the lookdown process associated with $\cP$ and $\xi_0$ of Definition \ref{DefLDProcess}.
\end{definition}
\noindent The exchangeability of $\hat{\Pi}_{0,t}$ and $\xi_0$ ensures that $(\xi_t(i))_{i\geq 1}$ is itself exchangeable (see for instance the proof of Theorem 2.1 in~\cite{BertoinRandomFragmentation}). To provide a definition of the process of empirical measures of this lookdown process in terms of the flow of partitions, we need the following result on the regularity of $\hat{\Pi}$ (the proof is postponed to Subsection \ref{AppendixRegularityFoP}).
\begin{proposition}\label{PropRegularityFoP}
There exists an event of $\bbP$-probability one on which the following holds true:\begin{enumerate}[i)]
\item The trajectories of $\hat{\Pi}$ are deterministic flows of partitions.
\item For every $s \in\bbR, t\geq 0$ the partitions $\hat{\Pi}_{s,s+t},\hat{\Pi}_{s,s+t-},\hat{\Pi}_{s-,s+t}$ possess asymptotic frequencies and whenever $t > 0$ we have the following convergences for every integer $i\geq 1$
\begin{eqnarray*}
\lim\limits_{\epsilon\downarrow 0}|\hat{\Pi}_{s,s+t+\epsilon}(i)|=\lim\limits_{\epsilon\downarrow 0}|\hat{\Pi}_{s+\epsilon,s+t}(i)|&=&|\hat{\Pi}_{s,s+t}(i)|\\
\lim\limits_{\epsilon\downarrow 0}|\hat{\Pi}_{s,s+t-\epsilon}(i)|&=&|\hat{\Pi}_{s,s+t-}(i)|\\
\lim\limits_{\epsilon\downarrow 0}|\hat{\Pi}_{s-\epsilon,s+t}(i)|&=&|\hat{\Pi}_{s-,s+t}(i)|
\end{eqnarray*}
\item For every $s\in\bbR$, $t\mapsto\sum_{i\geq 1}|\hat{\Pi}_{s,s+t}(i)|$ is c\`adl\`ag on $(0,\infty)$.
\end{enumerate}
\end{proposition}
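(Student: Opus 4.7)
The plan is to split the argument according to the two parts, using the Poissonian construction of $\hat{\Pi}^{\cP}$. For (i), I would introduce the almost sure event $\Omega_1$ on which $\cP$ has only finitely many atoms in $[-T,T]\times\cS_n$ for every $T>0$ and $n\in\bbN$ (and moreover no two atoms fall at the same time, which is automatic for a Poisson measure with non-atomic temporal intensity). On $\Omega_1$, the trajectory $\hat{\Pi}^{\cP}(\omega)$ is defined through the bijection of Proposition~\ref{FoPCorrespondence}, so the cocycle property and the left and right regularities are immediate from that construction. Hence (i) holds on $\Omega_1$.

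For (ii), I would first observe that for each fixed $s\leq t$, $\hat{\Pi}_{s,t}$ is obtained by iterated coagulation of the atoms of $\cP$ lying in $(s,t]\times\cS_\infty$; since the intensity $\mu_K+\mu_\Lambda$ is invariant under permutations of the levels, $\hat{\Pi}_{s,t}$ is an exchangeable random partition. Kingman's representation theorem then yields asymptotic frequencies almost surely, and a countable union over $(q_1,q_2)\in\bbQ^2$ with $q_1\le q_2$ produces an event $\Omega_2\subset\Omega_1$ of probability one on which $\hat{\Pi}_{q_1,q_2}$ admits asymptotic frequencies for all rational pairs. To extend to arbitrary real $s<t$, I would apply Lemma~\ref{LemmaDetFlowAccumulate} to find, for each $n$, rationals $q_1,q_2$ with $s<q_1<q_2<t$ and $\hat{\Pi}_{s,q_1}^{[n]}=\hat{\Pi}_{q_2,t}^{[n]}=\tO_{[n]}$; the cocycle then allows transfer of the existence of asymptotic frequencies from $\hat{\Pi}_{q_1,q_2}^{[n]}$ to $\hat{\Pi}_{s,t}^{[n]}$ for each $n$, and hence to $\hat{\Pi}_{s,t}$ by letting $n\to\infty$. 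The left and right limits $\hat{\Pi}_{s-,t}$ and $\hat{\Pi}_{s,t-}$, well defined by (i) and Lemma~\ref{LemmaDetFlowAccumulate}, are treated analogously.

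For the four convergences of frequencies, the main tool is the identity
\begin{equation*}
|\Coag(\pi,\pi')(i)|=\sum_{j\in\pi'(i)}|\pi(j)|
\end{equation*}
valid for exchangeable partitions with asymptotic frequencies (see~\cite{BertoinRandomFragmentation}). For example, to prove $\lim_{\epsilon\downarrow 0}|\hat{\Pi}_{s,t+\epsilon}(i)|=|\hat{\Pi}_{s,t}(i)|$, write $\hat{\Pi}_{s,t+\epsilon}=\Coag(\hat{\Pi}_{t,t+\epsilon},\hat{\Pi}_{s,t})$: for any fixed $n$ and sufficiently small $\epsilon>0$, Lemma~\ref{LemmaDetFlowAccumulate} gives $\hat{\Pi}_{t,t+\epsilon}^{[n]}=\tO_{[n]}$, so the sum defining the $i$-th block frequency reduces to $|\hat{\Pi}_{s,t}(i)|$ plus a remainder indexed by blocks of $\hat{\Pi}_{t,t+\epsilon}(i)$ with least element $>n$; this remainder vanishes as $n\to\infty$ uniformly in small $\epsilon$, thanks to the control of the dust produced by the forward coalescent on short time intervals. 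The three other convergences are entirely analogous. I expect the main obstacle to lie in this last argument: controlling the remainder uniformly, and justifying that the coagulation formula applies at the left/right limits $\hat{\Pi}_{s-,\cdot}$ and $\hat{\Pi}_{\cdot,t-}$, which are not one-dimensional marginals of the flow and hence not manifestly exchangeable; one should approach them by a continuity argument from the exchangeable partitions $\hat{\Pi}_{s-\epsilon,\cdot}$ and $\hat{\Pi}_{\cdot,t-\epsilon}$ as $\epsilon\downarrow 0$.
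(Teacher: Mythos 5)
Part (i) and the existence of asymptotic frequencies at rational pairs are fine and essentially follow the paper. The gap is in the core of part (ii). First, your base case is too weak: Kingman's theorem applied at each rational pair gives existence of frequencies at countably many \emph{fixed} pairs, but the convergence statements in $t$ require knowing that, for each rational $p$, the process $t\mapsto|\hat{\Pi}_{p,t}(i)|$ is well defined and c\`adl\`ag \emph{simultaneously for all} $t$; the paper obtains this by adapting Lemmas 3.4 and 3.5 of Donnelly--Kurtz, and uses it repeatedly. Second, and more seriously, your decomposition for the limit $|\hat{\Pi}_{s,t+\epsilon}(i)|\to|\hat{\Pi}_{s,t}(i)|$ is backwards. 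With the paper's convention, $\hat{\Pi}_{s,t+\epsilon}=\Coag(\hat{\Pi}_{t,t+\epsilon},\hat{\Pi}_{s,t})$ yields
\begin{equation*}
|\hat{\Pi}_{s,t+\epsilon}(i)|=\sum_{j\in\hat{\Pi}_{s,t}(i)}|\hat{\Pi}_{t,t+\epsilon}(j)|,
\end{equation*}
so the frequencies being summed are those of the \emph{short-interval} partition, indexed by the long-interval block. Knowing $\hat{\Pi}_{t,t+\epsilon}^{[n]}=\tO_{[n]}$ gives no control on $|\hat{\Pi}_{t,t+\epsilon}(j)|$ for $j\leq n$: a block may meet $[n]$ in a singleton yet carry positive mass. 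Worse, each such frequency tends to $0$ as $\epsilon\downarrow 0$, so your ``main term'' vanishes and the ``remainder'' indexed by $j>n$ carries all the mass; the decomposition cannot close. The workable decomposition (the paper's) interposes a \emph{rational} $p\in(s,t)$ and writes $|\hat{\Pi}_{s,t}(i)|=\sum_{j\in\hat{\Pi}_{s,p}(i)}|\hat{\Pi}_{p,t}(j)|$, so that the short interval supplies the index set (reduced to $\{i\}$ inside $[n]$ when $p$ is close to $s$) and the long interval supplies the frequencies.

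Finally, the appeal to ``control of the dust on short time intervals'' hides the real, regime-dependent difficulty. The tail bound one needs is $1-\sum_{l\leq n}|\hat{\Pi}_{p,t}(l)|\to 0$ as $n\to\infty$, which holds precisely in the dust-free regimes; this is why the paper's proof is a three-case analysis. In regime \Fin\ the restricted partition $r\mapsto\hat{\Pi}^{[n]}_{t-r,t}$ is locally constant after coming down from infinity, so $\hat{\Pi}_{s,t}=\hat{\Pi}_{q,t}$ for some rational $q$; in regimes \Disc\ and \Dust\ each fixed block $\hat{\Pi}_{s,t}(i)$ is locally constant because only finitely many reproduction events hit a given level; and only in regime \Inf\ does one run the sandwich argument, using the absence of dust to kill the tail. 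A single unified argument of the kind you sketch fails in the regimes with dust, where the tail does not vanish.
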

\noindent Now we set for all $t\geq 0$
\[ \ccE_{0,t}(\hat{\Pi},\xi_0) := \sum_{i\geq 1}|\hat{\Pi}_{0,t}(i)|\delta_{\xi_0(i)} + \big(1-\sum_{i\geq 1}|\hat{\Pi}_{0,t}(i)|\big)\,\ell\;,\]
where $\ell$ is the Lebesgue measure on $[0,1]$. This definition makes sense on the event of $\bbP$-probability one of Proposition \ref{PropRegularityFoP}. On the complementary event, we set any arbitrary values to $(\ccE_{0,t},t\geq 0)$.
\begin{proposition}\label{Corollary}
The process $(\ccE_{0,t}(\hat{\Pi},\xi_0),t\geq 0)$ is a c\`adl\`ag $\Lambda$-Fleming-Viot process. It coincides $\bbP$-a.s. with the process of empirical measures of the lookdown process $(\xi_t(i),t\geq 0),i\geq 1$ introduced in Theorem \ref{ThDK}.
\end{proposition}
\begin{proof}
From Proposition \ref{PropRegularityFoP}, we know that $\bbP$-a.s. $(\ccE_{0,t}(\hat{\Pi},\xi_0),t> 0)$ is a c\`adl\`ag $\ccM_1$-valued process. Moreover for all $t\geq 0$, $\bbP$-a.s. $\ccE_{0,t}(\hat{\Pi},\xi_0)$ coincides with the empirical measure of $(\xi_t(i),i\geq 1)$ (see for instance Lemma 2 of Foucart~\cite{Foucart12}). From Theorem \ref{ThDK}, we know that the process of empirical measures of the lookdown process is a c\`adl\`ag $\Lambda$-Fleming-Viot process. Since two c\`adl\`ag processes that coincide $\bbP$-a.s.~for all rational values are $\bbP$-a.s.~equal, the almost sure equality of the statement follows.
\end{proof}
\begin{remark}\label{RemarkLookdown}
If $\xi_0$ is not independent of the whole flow of partitions but only of $(\hat{\Pi}_{s,t},0 \leq s \leq t)$ then Proposition \ref{Corollary} obviously still holds.
\end{remark}
\begin{remark}
The process $(\ccE_{0,t}(\hat{\Pi},\xi_0),t> 0)$ is even c\`adl\`ag for the total variation distance on $\ccM_1$. This is a consequence of the fact that the atomic support does not evolve in time, together with the continuity of the frequencies of all the blocks and of the dust component.
\end{remark}

\subsubsection{Regularisation}\label{SubsubsectionStoFoPRegul}
We now consider a $\Lambda$ flow of partitions $\hat{\Pi}$ in the sense of Definition \ref{DefStoFoP}. Observe that its trajectories are not necessarily deterministic flows of partitions. Indeed, the cocycle property does not necessarily hold simultaneously for all triplets $r < s < t$ on a same event of probability one. Hence the bijection $J$ cannot be directly applied to obtain lookdown graphs. Below we prove the existence of a modification $\tilde{\hat{\Pi}}$ whose trajectories are genuine deterministic flows of partitions. The reason that motivates this technical discussion is that we will identify in Section \ref{SectionUnification} a stochastic flow of partitions embedded into a flow of bridges from which we will construct a lookdown process.
\begin{proposition}\label{PropRegularisation}
Let $\hat{\Pi}$ be a $\Lambda$ flow of partitions. There exists a collection of random partitions $\tilde{\hat{\Pi}}$ and an event $\Omega_{\hat{\Pi}}$ of probability one such that for all $s \leq t$, almost surely $\hat{\Pi}_{s,t}=\tilde{\hat{\Pi}}_{s,t}$ and such that for all $\omega\in\Omega_{\hat{\Pi}}$, the trajectory $\tilde{\hat{\Pi}}(\omega)$ is a deterministic flows of partitions.
\end{proposition}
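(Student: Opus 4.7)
\medskip\noindent\textbf{Proof plan.} The obstacle is that the cocycle identity of Definition~\ref{DefStoFoP} holds for each fixed triple only up to a triple-dependent null event, so the raw trajectories of $\hat\Pi$ need not satisfy the pathwise cocycle of Definition~\ref{DefDetFoP}. The plan is to enforce all required pathwise properties simultaneously on a countable dense time-grid through a Borel-Cantelli argument, and then to extend to all of $\bbR$ by passing to the limit, working restriction-by-restriction in $[n]$.

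Fix a countable dense set $D\subset\bbR$. By stationarity and the convergence in probability assumed in Definition~\ref{DefStoFoP}, for each $(n,s)\in\bbN\times D$ one picks deterministic sequences $t_k\downarrow s$ and $r_k\uparrow s$ in $D$ with $\sum_k\bbP(\hat\Pi_{s,t_k}^{[n]}\neq\tO_{[n]})<\infty$ and similarly for $\hat\Pi_{r_k,s}^{[n]}$. A countable Borel-Cantelli, together with a countable intersection of the a.s.\ cocycle identities over all triples $r<s<t$ in $D$, produces an event $\Omega_{\hat\Pi}$ of probability one on which both the $D^3$-cocycle and the eventual $\tO_{[n]}$-triviality of $\hat\Pi_{s,t_k}^{[n]}$ and $\hat\Pi_{r_k,s}^{[n]}$ hold. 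Since $\pi^{[n]}$ always refines $\Coag(\pi,\pi')^{[n]}$, the equality $\Coag(\pi,\pi')^{[n]}=\tO_{[n]}$ forces $\pi^{[n]}=\tO_{[n]}$; applied to the cocycle $\hat\Pi_{s,t_k}=\Coag(\hat\Pi_{t,t_k},\hat\Pi_{s,t})$ for $s<t<t_k$ in $D$, and symmetrically from the left via $\hat\Pi_{r_k,s}=\Coag(\hat\Pi_{r,s},\hat\Pi_{r_k,r})$ for $r_k<r<s$ in $D$, this yields a $D$-analogue of Lemma~\ref{LemmaDetFlowAccumulate}: for every $(n,s)\in\bbN\times D$ there is a random $\epsilon(\omega,n,s)>0$ with
\[
\hat\Pi_{s,t}^{[n]}=\hat\Pi_{r,s}^{[n]}=\tO_{[n]}\qquad\text{for all }t\in D\cap[s,s+\epsilon),\ r\in D\cap(s-\epsilon,s].
\]

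For arbitrary real $s\leq t$ I then set on $\Omega_{\hat\Pi}$
\[
\tilde{\hat\Pi}_{s,t}\;:=\;\lim_{\substack{s',t'\in D,\ s'\leq t'\\ s'\downarrow s,\ t'\downarrow t}}\hat\Pi_{s',t'}\, ,
\]
and $\tilde{\hat\Pi}_{s,t}:=\tO_{[\infty]}$ off $\Omega_{\hat\Pi}$. Existence of the limit is checked restriction-by-restriction in $[n]$: the eventual-triviality property above, applied at a rational $(s'_0,t'_0)$ slightly larger than $(s,t)$, combined with the $D^3$-cocycle, shows that $\hat\Pi_{s'',t''}^{[n]}$ is constant for $(s'',t'')$ in a small $D$-neighbourhood of $(s'_0,t'_0)$, so the double limit reduces to an eventually constant sequence in the finite set $\ccP_n$. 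Two verifications then conclude the argument: (a) for fixed real $(s,t)$, the a.s.\ identities $\hat\Pi_{s,t'}=\Coag(\hat\Pi_{t,t'},\hat\Pi_{s,t})$ and $\hat\Pi_{s,t'}=\Coag(\hat\Pi_{s',t'},\hat\Pi_{s,s'})$ combined with the in-probability convergences $\hat\Pi_{t,t'}\to\tO_{[\infty]}$ and $\hat\Pi_{s,s'}\to\tO_{[\infty]}$ and the Lipschitz continuity of $\Coag$ force $\hat\Pi_{s',t'}\to\hat\Pi_{s,t}$ in probability as $(s',t')\downarrow(s,t)$ along $D$, so $\tilde{\hat\Pi}_{s,t}=\hat\Pi_{s,t}$ almost surely; (b) on $\Omega_{\hat\Pi}$, the cocycle for $\tilde{\hat\Pi}$ over every real triple $r<s<t$ is inherited from the $D^3$-cocycle by passing to the limit using the continuity of $\Coag$, while the left and right regularities demanded by Definition~\ref{DefDetFoP} are immediate from the $D$-analogue of Lemma~\ref{LemmaDetFlowAccumulate} applied at a rational time close to $s$.

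The main technical point is the restriction-by-restriction bookkeeping needed to verify that the limits at all real pairs $(s,t)$ coherently assemble into genuine partitions of $\bbN$ on the common event $\Omega_{\hat\Pi}$; this is elementary because each $\ccP_n$ is finite and the restricted partitions are eventually constant in suitable $D$-neighbourhoods, but it is where the bulk of the formal proof lies.
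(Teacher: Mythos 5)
Your overall architecture---establish the cocycle and a non-accumulation property on a countable dense grid on a single full-probability event, define $\tilde{\hat{\Pi}}_{s,t}$ at real times by limits from above along the grid, extend the cocycle by the continuity of $\Coag$, and recover the modification property by combining the fixed-triple cocycle with the convergence in probability to $\tO_{[\infty]}$---is exactly that of the paper (Lemmas \ref{LemmaStoFlowAccumulate}, \ref{LemmaDefModif} and \ref{JumpsLDGraph}). However, there is a genuine gap in the step that is supposed to deliver the regularity. Your Borel--Cantelli argument yields, for each fixed $(n,s)\in\bbN\times D$, a random $\epsilon(\omega,n,s)>0$ such that the restricted partitions are trivial on $D$-pairs within $\epsilon$ of $s$; being countably many statements, these do hold simultaneously. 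But both the existence of your double limit at an arbitrary real pair $(s,t)$ and the left/right regularity required by Definition \ref{DefDetFoP} at \emph{every} real $s$ need non-accumulation at every real time simultaneously: to make $\hat{\Pi}^{[n]}_{s',t'}$ eventually constant as $(s',t')\downarrow(s,t)$ along $D$ you need some $s'_0,t'_0\in D$ with $\hat{\Pi}^{[n]}_{s',s'_0}=\hat{\Pi}^{[n]}_{t',t'_0}=\tO_{[n]}$ for \emph{all} grid points $s'\in(s,s'_0]$ and $t'\in(t,t'_0]$, and this fails precisely when the reproduction events affecting $[n]$ accumulate at the (random, possibly irrational) point $s$ or $t$ from the right. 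Countably many Borel--Cantelli conclusions, each localised at a point of $D$, do not exclude accumulation at a point outside $D$; and your appeal to ``the $D$-analogue of Lemma \ref{LemmaDetFlowAccumulate} applied at a rational time close to $s$'' does not repair this, because the random $\epsilon(\omega,n,s'_0)$ may shrink faster than $s'_0-s$ as $s'_0\downarrow s$.

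The paper closes exactly this gap in Lemma \ref{LemmaStoFlowAccumulate} by a different device: the counts $N_m:=\#\{i\in[2^m]:\hat{\Pi}^{[n]}_{(i-1)2^{-m},i2^{-m}}\neq\tO_{[n]}\}$ are non-decreasing in $m$ (by the grid cocycle and the fact that triviality of $\Coag(\pi,\pi')^{[n]}$ forces triviality of both arguments), the event $\{\sup_m N_m<\infty\}$ depends only on countably many finite-dimensional marginals, and those marginals coincide with the ones of the Poissonian flow $\hat{\Pi}^{\cal P}$, for which $\sup_m N_m<\infty$ is immediate. Boundedness of $N_m$ rules out accumulation at every point, rational or not, and is what makes your limits exist and your regularities hold everywhere. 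You would need to add this argument (or an equivalent quantitative one, e.g.\ $\bbE[N_m]=2^m\,\bbP(\hat{\Pi}^{[n]}_{0,2^{-m}}\neq\tO_{[n]})=O(1)$ using the finiteness of the total coalescence rate on $[n]$) for the proof to go through. A minor further point: the fact you invoke, that $\Coag(\pi,\pi')^{[n]}=\tO_{[n]}$ forces $\pi^{[n]}=\tO_{[n]}$, concerns the first argument of $\Coag$, whereas in $\hat{\Pi}_{s,t_k}=\Coag(\hat{\Pi}_{t,t_k},\hat{\Pi}_{s,t})$ the partition you must control is the second argument $\hat{\Pi}_{s,t}$; the analogous statement for the second argument is also true (because the index of the block of $\pi$ containing $k$ is at most $k$), but it requires its own short justification.
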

\noindent Our strategy of proof is to restrict to the rational marginals of the flow of partitions $\hat{\Pi}$, and to prove that they satisfy the properties of a deterministic flow of partitions up to an event of probability zero. Then, we take right and left limits on this object and prove that we recover a modification of the initial flow. The proof is postponed to Subsection \ref{SubsectionProofRegularisation}.

As a consequence of Proposition \ref{PropRegularisation}, we can set $\cP(\omega):=J(\tilde{\hat{\Pi}}(\omega))$ for all $\omega\in\Omega_{\hat{\Pi}}$. On the complementary event, $\cP(\omega)$ can be set to any arbitrary value. The point process $\cP$ is a $\Lambda$ lookdown graph so that Proposition \ref{PropRegularityFoP} can be applied to $\tilde{\hat{\Pi}}$. This ensures, in particular, that almost surely all the partitions defined by $\tilde{\hat{\Pi}}$ admit asymptotic frequencies.

\section{The Eves}\label{SectionEves}
We start with the classification into four regimes of the $\Lambda$-Fleming-Viot process. In view of the proof of Proposition \ref{PropClassif}, we recall some results of the literature on the behaviour of the $\Lambda$-coalescent, we refer to Pitman~\cite{Pitman99}, Schweinsberg~\cite{Schweinsberg00}, Bertoin and Le Gall~\cite{BertoinLeGall-3}, Gnedin et al.~\cite{GnedIksaMaryn11} and Freeman~\cite{FreemanSingletons} for the proofs. Let $(\Pi_t,t\geq 0)$ be a $\Lambda$-coalescent. We use the regimes introduced in the statement of Proposition \ref{PropClassif} which are characterised in terms of the measure $\Lambda$. In regime \Disc, almost surely for all $t>0$ the partition $\Pi_t$ has dust and finitely many non-singleton blocks. In regime \Dust, almost surely for all $t>0$ the partition $\Pi_t$ has dust and infinitely many non-singleton blocks. In regime \Inf, almost surely for all $t>0$ the partition $\Pi_t$ has no dust and infinitely many non-singleton blocks. Finally in regime \Fin, almost surely for all $t>0$ the partition $\Pi_t$ has no dust and finitely many non-singleton blocks. In the latter regime, we say that the $\Lambda$-coalescent Comes Down from Infinity (CDI).\vspace{6pt}\\
\textit{Proof of Proposition \ref{PropClassif}.} Let $(\rho_t,t\geq 0)$ be a $\Lambda$-Fleming-Viot process. Fix $t>0$. We know that $\rho_t$ has the same distribution on $\ccM_1$ as the r.v.~$\ccE_{0,t}(\hat{\Pi},\xi_0)$ defined in Subsection \ref{SubsubsectionStoFoPPoisson}. By definition of the measure $\ccE_{0,t}(\hat{\Pi},\xi_0)$, the dust and the number of atoms of this random probability measure have the same law as the dust and the number of non-singleton blocks of a $\Lambda$-coalescent taken a time $t$. Using the results on the $\Lambda$-coalescent recalled above, we obtain the asserted classification.\cqfd\\

To study the existence of the Eves we need to deal with a regular version of the $\Lambda$-Fleming-Viot process. Let $\bP$ be the distribution of the $\Lambda$-Fleming-Viot process on the space $\bbD:=\bbD([0,\infty),\ccM_1)$ of c\`adl\`ag $\ccM_1$-valued functions endowed with the usual Skorohod's topology. Recall that $\ccM_1$ is equipped with the topology of the weak convergence of probability measures. We denote by $\ccB(\bbD)$ the Borel sigma-field associated with $\bbD$ augmented with the $\bP$-null sets.
\begin{proposition}\label{PropMeasurabilityEves}
The set
\[ \cO := \{\rho \in \bbD: \rho\text{ admits an infinite sequence of Eves} \}\]
belongs to $\ccB(\bbD)$. Moreover the map
\begin{eqnarray*}
\bbD\cap \cO &\rightarrow& [0,1]^{\bbN}\\
\rho &\mapsto& (\rme^i)_{i\geq 1}
\end{eqnarray*}
is measurable when $[0,1]^\bbN$ is endowed with the product sigma-field.
\end{proposition}
\noindent A consequence of this result is that the existence of the Eves does not depend on the construction of the $\Lambda$-Fleming-Viot process we are using.
\begin{proof}
We start with regimes \Disc, \Dust\ and \Inf. We set for any $k,l,n\geq 1$ and any $\epsilon \in (0,1)$
\[ \cO(k,\epsilon,l,n)\!\!:=\!\!\underset{i_1,\ldots,i_k \in [2^n]}{\bigcup}\,\underset{j\in[k]}{\bigcap}\underset{\substack{t\geq l\\t\in\bbQ}}{\bigcap}\bigg\{\rho_t\Big(\Big[\frac{i_j-1}{2^{n}},\frac{i_j}{2^{n}}\Big)\Big) > (1-\epsilon)\bigg(1-\sum_{m=1}^{j-1}\rho_t\Big(\Big[\frac{i_m-1}{2^{n}},\frac{i_m}{2^{n}}\Big)\Big)\bigg)\bigg\}\]
which is clearly an element of $\ccB(\bbD)$. Subsequently we set
\[ \cO(k) = \underset{\epsilon \in (0,1)\cap\bbQ}{\bigcap}\, \varliminf\limits_{l\rightarrow\infty}\varliminf\limits_{n\rightarrow\infty} \cO(k,\epsilon,l,n).\]
The event $\cO(k)$ is the event where the $k$ first Eves exist. Then $\cO=\lim_{k\rightarrow\infty}\downarrow \cO(k)$ so that $\cO$ belongs to $\ccB(\bbD)$. The event $\{\rme^1 < x \}$ is then obtained by modifying the definition of $\cO(1,\epsilon,l,n)$ by restricting to the $i_1$'s which are smaller than $\inf\{j\in[2^n]:x\leq j2^{-n}\}$ and taking the limit as above. This can be generalised easily to prove the measurability of the whole sequence.\vspace{4pt}\\
We now turn to regime \Fin\ which is more involved. We rely on the following four claims.\vspace{-14pt}
\paragraph{Claim 1.} The set 
\begin{equation*}
C:=\left\{\rho \in \bbD:\begin{array}{l} \forall t\in\bbQ_+^*, \rho_t\text{ is a weighted sum of finitely many atoms}\\
\forall t,s \in\bbQ_+^* \text{ the atoms at time }t+s\mbox{ are a subset of the atoms at time }t.\end{array}\right\}
\end{equation*}
belongs to $\ccB(\bbD)$.\vspace{-8pt}
\paragraph{Claim 2.} On the set $C$, for all $t>0$ the measure $\rho_t$ is a sum of finitely many atoms. Furthermore, for every $s \in \bbQ_+^*$ and every $t > s$ the atoms of $\rho_t$ are necessarily atoms of $\rho_s$.\vspace{6pt}\\
Consequently on $C$ we can define $\#\rho_t$ as the number of atoms of the measure $\rho_t$ for any given time $t>0$. Of course $\rho\mapsto\#\rho_t$ is measurable from $C$ to $\bbN$.\vspace{-5pt}
\paragraph{Claim 3.} For every $i\geq 2$ and every $\rho\in C$ we define $\tau_i :=\inf\{t> 0: \#\rho_t < i\}$. Then $\tau_i$ is an $\ccH_t$-stopping time where $\ccH_t:=\sigma(\rho_s,s\in[0,t])$ augmented with the $\bP$-null sets.\vspace{-8pt}
\paragraph{Claim 4.} The set
\begin{equation*}
C'\!:= \left\{\rho \in \bbD:\begin{array}{l}\forall t\in\bbR_+^*, \rho_t\text{ is a weighted sum of finitely many atoms}\\
\forall t,s \in\bbR_+^* \text{ the atoms at time }t+s\mbox{ are a subset of the atoms at time }t\\
t \mapsto \#\rho_t\text{ is c\`adl\`ag on }(0,\infty)
\end{array}\right\}
\end{equation*}
belongs to $\ccB(\bbD)$.\vspace{8pt}\\
The set $\cO$ is the subset of $C'$ where $t \mapsto \#\rho_t$ makes only jumps of size $-1$ so that $\cO$ belongs to $\ccB(\bbD)$. To end the proof of Proposition \ref{PropMeasurabilityEves} in regime \Fin\ we observe that on the event $\cO$, we have for every $x\in[0,1]$
\[ \{\rme^1\leq x\} = \cO\,\bigcap\bigg(\underset{t\in\bbQ_+}{\bigcup}\{\rho: \#\rho_t=1, \rho_t([0,x])=1\}\bigg)\]
so that the measurability of $\rme^1$ is immediate. This can be generalised easily to prove the measurability of the whole sequence.\\
\textit{Proof of Claim 1.} For every $t,s > 0$ and every $k\geq 1, m\geq 0, n\geq 1$ let
\begin{eqnarray*}
C(k,t,m,s,n):=\!\!\!\!\!\underset{\substack{i_1,\ldots,i_{k+m} \in [2^n]\\i_1,\ldots,i_{k+m}\text{are distinct}}}{\bigcup}\!\!\!\!\!\bigg(\!\!\!\!\!\!\!&&\underset{j\in[k]}{\bigcap}\bigg\{\rho_t\Big(\Big[\frac{i_j-1}{2^{n}},\frac{i_j}{2^{n}}\Big)\Big) > 0;\rho_{t+s}\Big(\Big[\frac{i_j-1}{2^{n}},\frac{i_j}{2^{n}}\Big)\Big)>0\bigg\}\\
\!\!\!\!\!\!\!&&\!\!\!\!\!\underset{k<j\leq k+m}{\bigcap}\!\bigg\{\rho_t\Big(\Big[\frac{i_j-1}{2^{n}},\frac{i_j}{2^{n}}\Big)\Big) > 0;\rho_{t+s}\Big(\Big[\frac{i_j-1}{2^{n}},\frac{i_j}{2^{n}}\Big)\Big)=0\bigg\}\\
\!\!\!\!\!\!\!&&\!\!\!\!\!\!\!\!\!\!\!\!\!\underset{i\in[2^n],i\ne i_1,\ldots,i_{k+m}}{\bigcap}\!\!\!\!\!\Big\{\rho_t\Big(\Big[\frac{i-1}{2^{n}},\frac{i}{2^{n}}\Big)\Big) =\rho_{t+s}\Big(\Big[\frac{i-1}{2^{n}},\frac{i}{2^{n}}\Big)\Big)= 0\Big\}\bigg)
\end{eqnarray*}
which is obviously a measurable set. Then we have
\[ C = \underset{t,s \in \bbQ_+^*}{\bigcap}\underset{k\geq 1}{\bigcup}\underset{m\geq 0}{\bigcup}\varliminf\limits_{n\rightarrow\infty}C(k,t,m,s,n).\]
so that it is a Borel set of $\bbD$. Claim 1 is proved.\\
\textit{Proof of Claim 2.} Fix $t > 0$ and let $s \in (0,t)\cap\bbQ$. We restrict ourselves to $\rho$'s which belong to $C$. We know that for every $\epsilon > 0$ such that $t+\epsilon \in \bbQ$ the atoms of $\rho_{t+\epsilon}$ are atoms of $\rho_s$. Let $S$ be the union of the atoms of $\rho_{t+\epsilon}$ when $\epsilon$ varies. Necessarily $S$ is a finite set so that it is a closed set of $[0,1]$. By the right continuity of $\rho$ we deduce that
\[ \rho_t([0,1]\backslash S) \leq \varliminf\limits_{\epsilon \downarrow 0} \rho_{t+\epsilon}([0,1]\backslash S)=0\]
where implicitly $\epsilon$ is taken such that $t+\epsilon$ belongs to $\bbQ$. This ensures that $\rho_t$ is a sum of finitely many atoms which are also atoms of $\rho_s$. Claim 2 is proved.\\
\textit{Proof of Claim 3.} If $\tau_i \leq t$ then there are two cases. Either at time $t$ the number of atoms is smaller than $i$, this is a measurable event. Or at any time $s\in\{t\}\cup\big((0,t)\cap\bbQ\big)$ the number of atoms is larger than or equal to $i$ and there exists at least one time $r\in(0,t)\backslash\bbQ$ such that $\#\rho_r < i$. Let us show the measurability in the latter case. Fix $m\geq 0$ and consider the largest interval of time $[s_-,s_+)$ on which for all rational $s\in[s_-,s_+)\cap\bbQ$ the measure $\rho_s$ has $m+i$ atoms. We call $a_1 < \ldots <a_{m+i}$ these atoms. By the right continuity of $\rho$ we know that $s\mapsto \rho_s(\{a_1\}),\ldots,\rho_s(\{a_{m+i}\})$ are c\`adl\`ag processes on $[s_-,s_+)$. For each collection $j_1 < \ldots < j_{m+1}$ of $m+1$ indices among $[m+i]$, we introduce the hitting time of $0 \in \bbR^{m+1}$ by $(s_-,s_+)\ni s\mapsto \big(\rho_s(\{a_{j_1}\}),\ldots,\rho_s(\{a_{j_{m+1}}\})\big)$ which is a stopping time in the augmented filtration $\ccH_t$. Then the latter case above coincides $\bP$-a.s.~with the union on $m\geq 0$ and on the $j_1<\ldots<j_{m+1}$'s of the event where this hitting time belongs to $[s_-,s_+)\backslash\{0\}$. The claim follows.\\
\textit{Proof of Claim 4.} The set $C'$ is the subset of $C$ where the times $\tau_i$ are the infimum of the rational times at which $\rho$ has less than $i$ atoms. Therefore
\[ C' := C \,\underset{i\geq 2}{\bigcap}\Big\{\tau_i=\inf\{t\in\bbQ_+^*: \#\rho_t < i\}\Big\}\]
and the claim follows. This ends the proof of the proposition.
\end{proof}

From now on, we rely on the lookdown representation to study the Eves. We consider a probability space $(\Omega,\ccF,\bbP)$ on which is defined a $\Lambda$ flow of partitions $\hat{\Pi}$ arising from the Poissonian construction of Subsection \ref{SubsubsectionStoFoPPoisson} and an independent sequence $\xi_0$ of i.i.d. uniform$[0,1]$ r.v. We then set $\rho_t:=\ccE_{0,t}(\hat{\Pi},\xi_0)$ for all $t\geq 0$. Proposition \ref{Corollary} ensures that $(\rho_t,t\geq 0)$ is a $\Lambda$-Fleming-Viot $\bbP$-a.s. c\`adl\`ag. The study of the existence of the Eves in regime \Fin\ requires a specific technical work. We define $\#\pi$ as the number of non-empty blocks of a partition $\pi$.
\begin{lemma}\label{LemmaMeasurabilityCDI}
In regime \Fin\ we define for every $i\geq 1$ the r.v. $d(i) := \inf\{t> 0: |\hat{\Pi}_{0,t}(i)|=0\}$. Then $\bbP$-a.s. for every $i\geq 1$ we have $d(i)=\inf\{t> 0: \hat{\Pi}_{0,t}(i)=\emptyset \}$. Consequently $\bbP$-a.s.\begin{itemize}
\item for every $i\geq 1$, $d(i) \geq d(i+1)$,
\item for every $i\geq 1$, the block $\hat{\Pi}_{0,t}(i)$ has a strictly positive frequency when $t\in(0,d(i))$ and is empty when $t\in[d(i),\infty)$,
\item $d(i)\downarrow 0$ as $i\rightarrow \infty$.
\end{itemize}
\end{lemma}
\begin{remark}
If we have a strict inequality $d(i) > d(i+1)$ for every $i\geq 1$, then we have existence of an infinite sequence of Eves.
\end{remark}

\begin{proof}
First observe the following deterministic fact. For any two partitions $\pi,\pi' \in \ccP_\infty$, if $\pi'$ has $n$ blocks then $\Coag(\pi,\pi')$ has at most $n$ blocks. Recall that in regime \Fin\ at any time $t > 0$ the exchangeable partition $\hat{\Pi}_{0,t}$ has $\bbP$-a.s.~no dust and finitely many blocks that all have stricly positive asymptotic frequency. Using these two observations and the fact that $\bbP$-a.s.
\[ \rho_t = \sum_{i\geq 1}|\hat{\Pi}_{0,t}(i)|\delta_{\xi_0(i)} + \big(1-\sum_{i\geq 1}|\hat{\Pi}_{0,t}(i)|\big)\ell\]
we easily get that $\bbP(C)=1$ where $C$ is the event introduced in Claim 1 of the proof of Proposition \ref{PropMeasurabilityEves}.\\
We now prove that $\bbP(C')=1$ where $C'$ is the event introduced in Claim 4. For every $i\geq 2$ we define $q_i:=\inf\{t\in\bbQ_+^*: \#\rho_t<i\}$. As $\bbP(C)=1$ we know that $\bbP$-a.s.~the sequence $q_i,i\geq 2$ is non-increasing. Since $\hat{\Pi}_{0,0}=\tO_{[\infty]}$ and $\bbP$-a.s.~for every $t\in\bbQ_+^*$ the partition $\hat{\Pi}_{0,t}$ has a finite number of blocks which have positive asymptotic frequencies we deduce that $\bbP$-a.s.~the sequence $q_i,i\geq 2$ has no positive lower bound and thus converges to $0$. Moreover $\bbP$-a.s.~$q_2<\infty$ since $\bbP$-a.s.~there exists $t\in\bbQ_+^*$ such that $\#\hat{\Pi}_{0,t}=1$. For every $i\geq 2$ and every $\epsilon\in\bbQ_+^*$ we define $\tau_i(\epsilon):=\inf\{t\geq \epsilon: \#\rho_t<i\}$. The right continuity of $\rho$ entails that $\bbP$-a.s.~the measure $\rho_{\tau_i(\epsilon)}$ has less than $i$ atoms. Furthermore the arguments in the proof of Claim 3 above ensure that this is an $\ccH_t$-stopping time. Then Proposition 3.1 in Donnelly and Kurtz~\cite{DK99} yields that the sequence $(\xi_{\tau_i(\epsilon)}(j))_{j\geq 1}$ is exchangeable. Necessarily its empirical measure is $\bbP$-a.s. equal to $\rho_{\tau_i(\epsilon)}$. By de Finetti Theorem (see for instance p.103 in Bertoin~\cite{BertoinRandomFragmentation}) conditionally given $\rho_{\tau_i(\epsilon)}$ the sequence $(\xi_{\tau_i(\epsilon)}(j))_{j\geq 1}$ is i.i.d. with distribution $\rho_{\tau_i(\epsilon)}$. Therefore $\bbP$-a.s.~this sequence takes its values in the set of atoms of $\rho_{\tau_i(\epsilon)}$. This together with the exchangeability of $\hat{\Pi}_{0,t}$ at any given time $t$ ensures the existence of an event $\Omega^*$ of $\bbP$-probability $1$ on which $\hat{\Pi}$ fulfils the regularity properties of Proposition \ref{PropRegularityFoP}, on which $\rho\in C$ and on which for all $t\in\bbQ_+^* \cup\{\tau_i(\epsilon),i\geq 2,\epsilon\in\bbQ_+^*\}$ we have\begin{enumerate}[(a)]
\item $\#\hat{\Pi}_{0,t} = \#\rho_{t}$,
\item The quantity $|\hat{\Pi}_{0,t}(j)|=\rho_{t}(\{\xi_{0}(j)\})$ is strictly positive i.f.f. $j\leq \#\hat{\Pi}_{0,t}$.
\end{enumerate}
We now work deterministically on $\Omega^*$. Fix $i\geq 2$. There exists $\epsilon \in \bbQ_+^*$ such that $\epsilon < q_i$. Necessarily $\tau_i(\epsilon)\leq q_i$. Suppose that $\tau_i(\epsilon)<q_i$. Then by (a) and (b) above and the deterministic fact at the beginning of the proof, we have $\#\rho_t<i$ for all $t\in[\tau_i(\epsilon),\infty)$ which is in contradiction with the definition of $q_i$. Therefore $\tau_i(\epsilon)=q_i$ and for all $\epsilon' \in\bbQ_+^*$ such that $\epsilon'<\epsilon$ we have $\tau_i(\epsilon')=q_i$. Since $\tau_i = \lim_{\epsilon\downarrow 0}\tau_i(\epsilon)$ we deduce that $\tau_i=q_i$. Consequently $\bbP(C')=1$.\\
Set $\tau_1=\infty$. Notice that the right continuity of the asymptotic frequencies of the blocks ensures that $|\hat{\Pi}_{0,d(i)}(i)|=0$. For every $i\geq 1$ the atom $\xi_0(i)$ disappears at time $d(i)$ and thus necessarily $d(i)=\tau_k$ for a certain $k\geq 1$. Let us show that $d(i)=\tau_i$ for every $i\geq 1$. If $d(1) < \infty$ then $k\geq 2$ so that properties (a) and (b) above hold true for $t=d(1)$ and we deduce that $\#\hat{\Pi}_{0,d(1)}$ has no blocks which is not possible for a partition. Consequently $d(1)=\infty$. We now prove by induction that $d(i)=\tau_i \leq d(i-1)$ for every $i\geq 2$. At rank $i=2$, we know that for all $t\in(0,d(2))$ we have $\rho_t(\{\xi_0(j)\})>0$ when $j=1,2$ so that $\#\rho_t \geq 2$ and $\tau_2 \geq d(2)$. Since $\rho_{d(2)}(\{\xi_{0}(2)\})=0$ we deduce using property (b) above that $\tau_2=d(2)$. As $d(1) = \infty$, the inequality $d(2) \leq d(1)$ is trivially satisfied. Suppose now that $d(j)=\tau_{j}$ for all $j\in[i-1]$ with $i\geq 2$ being given. Necessarily $d(i)\leq d(i-1)$. Indeed if $d(i) > d(i-1)$ then $\rho_{d(i-1)}(\{\xi_{0}(i)\}) > 0$ and property (b) would yield that 
$\rho_{d(i-1)}(\{\xi_{0}(i-1)\}) > 0$ which is contradictory. Now observe that for all $t \in (0,d(i))$ we have $\rho_{t}(\{\xi_{0}(j)\})>0$ for $j\in[i]$ so that $\#\rho_t \geq i$. Consequently $d(i)$ is the first time at which $\rho$ has less than $i$ atoms. Hence $d(i)=\tau_i$. The induction is complete. We have shown that for every $i\geq 1$ on $(0,d(i))$ the asymptotic frequency of $\hat{\Pi}_{0,t}(i)$ is strictly positive and that $\hat{\Pi}_{0,t}(i)$ is empty on $[d(i),\infty)$. Furthermore since $d(i)=q_i$ we know that the sequence $d(i),i\geq 2$ is non-increasing and converges to $0$.
\end{proof}
We now proceed to the proof of Proposition \ref{PropEvesLD}. In view of Definition \ref{DefSequenceEves} we will say that we are in the \CDIPE\ when the $\Lambda$-Fleming-Viot process is in regime \Fin\ and admits an infinite sequence of Eves, and we will say that we are in the \BS\ when the $\Lambda$-Fleming-Viot process is in any other regime and admits an infinite sequence of Eves.
\begin{remark}\label{RemarkDistinct}
If we do not assume that $\xi_0$ is a sequence of i.i.d.~uniform$[0,1]$ r.v.~but only a sequence of almost surely distinct values in $[0,1]$ and if $\rho$ is a $\Lambda$-Fleming-Viot process, then Proposition \ref{PropEvesLD} still holds and the proof works verbatim.
\end{remark}
\noindent\textit{Proof of Proposition \ref{PropEvesLD}}. Assume that $\rho$ almost surely admits an infinite sequence of Eves. Consider first the \CDIPE. By construction of $\rho$, we know that $\bbP$-a.s. for all $t\geq 0$
\[ \rho_t = \sum_{i=1}^{\#\hat{\Pi}_{0,t}}|\hat{\Pi}_{0,t}(i)|\delta_{\xi_0(i)}\]
where $\#\pi$ denotes the number of blocks of a partition $\pi$. Therefore, the atoms of $\rho$ are the $\xi_0(i)$'s - which are $\bbP$-a.s.~distinct - and their masses are given by the sizes of the blocks $(|\hat{\Pi}_{0,t}(i)|,t\geq 0)$'s. By Lemma \ref{LemmaMeasurabilityCDI}, the hitting times of $0$ of these blocks are given by the r.v.~$d(i)$'s. The almost sure existence of an infinite sequence of Eves states that almost surely any two atoms of $\rho$ die at distinct times. As we have seen that these atoms are the $\xi_0(i)$'s, the almost sure existence of an infinite sequence of Eves yields that $\bbP$-a.s.~for all $i\ne j$, $\xi_0(i)$ and $\xi_0(j)$ die out at distinct times. This ensures that $\bbP$-a.s.~for all $i\geq 1$ we have $d(i) > d(i+1)$, and $\bbP$-a.s. for all $i\geq 1$ we have $\xi_0(i)=\rme^i$.\\
Consider now the \BS. For all $t\geq 0$, we denote by $a_t(1) \geq a_t(2) \geq \ldots \geq 0$ the masses of $\rho_t$ ranked in the decreasing order. We prove by induction on $i\geq 1$ that
\[ \forall j\in[i],\;\;\bbP(|\hat{\Pi}_{0,t}(j)|=a_t(j))\underset{t\rightarrow\infty}{\longrightarrow}1 \text{ and }\bbP\text{-a.s. }\frac{|\hat{\Pi}_{0,t}(j)|}{1-\sum_{k=1}^{j-1}|\hat{\Pi}_{0,t}(k)|} \underset{t\rightarrow\infty}{\longrightarrow} 1.\]
We start at rank $i=1$. From the definition of $\rho$ and the fact that the $\xi_0(i)$'s are $\bbP$-a.s.~distinct there exists an event of $\bbP$-probability one on which there is a one-to-one correspondence between the blocks of $\hat{\Pi}_{0,t}$ with positive frequency and the masses of $\rho_t$. The definition of $\rme^1$ entails that $\bbP$-a.s. $|\hat{\Pi}_{0,t}(1)|$ converges to either $0$ or $1$ as $t\rightarrow\infty$ and that $a_t(1)$ converges to $1$ as $t\rightarrow\infty$. The exchangeability of the partition $\hat{\Pi}_{0,t}$ implies that $\bbP(|\hat{\Pi}_{0,t}(1)|=a_t(1) \,|\, (a_t(k))_{k\geq 1}) = a_t(1)$. Consequently $\bbP(|\hat{\Pi}_{0,t}(1)|=a_t(1)) \rightarrow 1$ as $t\rightarrow\infty$ by the dominated convergence theorem. Since we know that $\bbP$-a.s.~$|\hat{\Pi}_{0,t}(1)|$ converges towards $0$ or $1$, we deduce that it $\bbP$-a.s.~converges to $1$. The claimed property is proved at rank $i=1$. Assume that the property holds at a given rank $i-1 \geq 1$. The existence of an infinite sequence of Eves yields that $\bbP$-a.s.
\[ \frac{a_t(i)}{1-\sum_{j=1}^{i-1}a_t(j)} \underset{t\rightarrow\infty}{\longrightarrow} 1.\]
The one-to-one correspondence between the atoms of $\rho_t$ and the blocks of $\hat{\Pi}_{0,t}$ with positive frequency along with the induction hypothesis and the existence of an infinite sequence of Eves imply that $\bbP$-a.s.
\[ \frac{|\hat{\Pi}_{0,t}(i)|}{1-\sum_{j=1}^{i-1}|\hat{\Pi}_{0,t}(j)|}\]
converges to either $0$ or $1$ as $t\rightarrow\infty$. The exchangeability of $\hat{\Pi}_{0,t}$ entails that
\[ \bbP\Big(|\hat{\Pi}_{0,t}(i)|=a_t(i) \,\Big|\,\{\forall j\in[i-1],|\hat{\Pi}_{0,t}(j)|=a_t(j)\}; (a_t(k))_{k\geq i}\Big) = \frac{a_t(i)}{1-\sum_{j=1}^{i-1}a_t(j)}.\]
Using the dominated convergence theorem and the arguments above we deduce that $\bbP(\hat{\Pi}_{0,t}(i)=a_t(i))$ converges to $1$. Henceforth $\bbP$-a.s.~$\frac{|\hat{\Pi}_{0,t}(i)|}{1-\sum_{j=1}^{i-1}|\hat{\Pi}_{0,t}(j)|}$ goes to $1$ as $t$ goes to infinity and the induction is complete. Since for all $i\geq 1$, $\rho_t(\{\xi_0(i)\})=|\hat{\Pi}_{0,t}(i)|$ we deduce from the convergence obtained by induction and the uniqueness of the Eves that $\bbP$-a.s. $\xi_0(i)=\rme^i$.\\
In the case where the $\Lambda$-Fleming-Viot process does not admit an infinite sequence of Eves the arguments above still hold for the primitive Eve.\cqfd
\begin{corollary}\label{CorDistribEves}
Consider a c\`adl\`ag $\Lambda$-Fleming-Viot process that admits almost surely a sequence of Eves $(\rme^i,i\geq 1)$. Then the Eves are i.i.d. uniform$[0,1]$ and are independent of the sequence $(\rho_t(\{\rme^i\}),t\geq 0),i\geq 1$.
\end{corollary}
\begin{proof}
First assume that the $\Lambda$-Fleming-Viot process is obtained via the lookdown representation. Recall that the Eves coincide with the initial types thanks to Proposition \ref{PropEvesLD}. Since $(\xi_0(i),i\geq 1)$ is a sequence of i.i.d.~uniform$[0,1]$ r.v.~ which is independent of $\hat{\Pi}$ and since for every $i\geq 1$, $(\rho_t(\{\rme^i\}),t\geq 0)=(\hat{\Pi}_{0,t}(i),t\geq 0)$, the asserted result follows. Now consider any c\`adl\`ag $\Lambda$-Fleming-Viot process $(\rho_t,t\geq 0)$. Proposition \ref{PropMeasurabilityEves} ensures that the Eves are $\sigma(\rho)$-measurable r.v.~and therefore the distribution of $(\rho,(\rme^i)_{i\geq 1})$ coincides with that obtained via the lookdown construction.
\end{proof}

\section{Results on the existence of the Eves}\label{SectionExistence}
The goal of this section is to prove Theorem \ref{ThCDI}, Proposition \ref{PropBS} and Theorem \ref{ThEvesDust}. We consider each of the four regimes separately. Except for regime \Inf, we rely on the lookdown representation of the $\Lambda$-Fleming-Viot process. Let $(\Omega,\ccF,\bbP)$ be a probability space on which is defined a $\Lambda$ flow of partitions arising from the Poissonian construction and an independent sequence $\xi_0=(\xi_0(i),i\geq 1)$ of i.i.d.~uniform$[0,1]$ r.v. We set $\rho_t:=\ccE_{0,t}(\hat{\Pi},\xi_0)$ for all $t\geq 0$. We also rely on the lookdown process $(\xi_t(i),t\geq 0),i\geq 1$ defined from $\hat{\Pi}$ and $\xi_0$ according to Definition \ref{DefLDProcess2}. Let us introduce some technical tools for later use. We define the filtration
\begin{equation}\label{EqFiltration}
\ccF_t:=\sigma(\hat{\Pi}_{0,s},0 \leq s \leq t),\;\;t\geq 0
\end{equation}
associated to the flow of partitions. We augment this filtration with the $\bbP$-null sets.
\begin{lemma}\label{LemmaMarkovFoP}
The process $(\hat{\Pi}_{0,t},t\geq 0)$ is a $\ccP_\infty$-valued Markov process with a Feller semigroup. For any $\ccF_t$-stopping time $\tau$, conditionally given $\{\tau<\infty\}$ the process $(\hat{\Pi}_{\tau,\tau+t},t\geq 0)$ is independent of $\ccF_\tau$ and has the same distribution as $(\hat{\Pi}_{0,t},t\geq 0)$.
\end{lemma}
\begin{proof}
The very definition of stochastic flows of partitions ensures that $(\hat{\Pi}_{0,t},t\geq 0)$ is Markov with a semigroup $Q_t$ defined as follows. For every $\pi \in \ccP_{\infty}$ and every bounded measurable map $f$ on $\ccP_\infty$\
\[ Q_tf(\pi)=\bbE\Big[f(\Coag(\hat{\Pi}_{0,t},\pi))\Big].\]
Consider a bounded continuous map $f$. Since $\Coag$ is a bi-continuous operator (see Section 4.2 of the book of Bertoin~\cite{BertoinRandomFragmentation}), the dominated convergence theorem ensures that $\pi\mapsto Q_tf(\pi)$ is a bounded continuous map. Since $\ccP_\infty$ is a compact metric space, the map $f$ is uniformly continuous. For every $\epsilon > 0$, there exists $n\geq 1$ such that $d_{\ccP}(\pi,\pi')\leq 2^{-n} \Rightarrow |f(\pi)-f(\pi')|<\epsilon$ where $d_{\ccP}$ is the distance introduced in Formula (\ref{EqDistance}). Thus we get
\begin{equation*}
\sup\limits_{\pi\in\ccP_\infty}|Q_tf(\pi)-f(\pi)|\leq \epsilon\,\bbP\big(\hat{\Pi}_{0,t}^{[n]}=\tO_{[n]}\big) + 2\sup_{\pi\in\ccP_\infty}|f(\pi)|\Big(1-\bbP\big(\hat{\Pi}_{0,t}^{[n]}=\tO_{[n]}\big)\Big)
\end{equation*}
Since $\hat{\Pi}_{0,t} \rightarrow \tO_{[\infty]}$ in probability as $t\downarrow 0$, the right member goes to $\epsilon$ as $t\downarrow 0$. This implies the Feller property of $Q$. Let $\tau$ be an $\ccF_t$-stopping time. Let $A\in\ccF_{\tau}$. Let $f_1,\ldots,f_p$ be $p\geq 1$ bounded continuous maps on $\ccP_\infty$. We want to prove that for every $0 \leq t_1 \leq \ldots \leq t_p$
\[ \bbE[\tun_A\tun_{\{\tau<\infty\}} f_1(\hat{\Pi}_{\tau,\tau+t_1})\ldots f_p(\hat{\Pi}_{\tau,\tau+t_p})]=\bbE[\tun_A\tun_{\{\tau<\infty\}}]\bbE[f_1(\hat{\Pi}_{0,t_1})\ldots f_p(\hat{\Pi}_{0,t_p})].\]
We check the result for $p=1$, since the general case can be treated similarly. For every integer $n\geq 1$, let $\tau_n$ be the smallest real number of the form $k/n$ which is strictly larger than $\tau$. The right continuity of the trajectories at the first line, the independence and stationarity of the increments of a flow of partitions at the third and fourth lines ensure that
\begin{eqnarray*}
\bbE[\tun_A\tun_{\{\tau<\infty\}} f_1(\hat{\Pi}_{\tau,\tau+t_1})]&=&\lim\limits_{n\rightarrow\infty}\bbE[\tun_A\tun_{\{\tau<\infty\}} f_1(\hat{\Pi}_{\tau_n,\tau_n+t_1})]\\
&=&\lim\limits_{n\rightarrow\infty}\sum_{k\geq 0}\bbE[\tun_A\tun_{\{(k-1)/n \leq \tau < k/n\}} f_1(\hat{\Pi}_{k/n,k/n+t_1})]\\
&=&\lim\limits_{n\rightarrow\infty}\sum_{k\geq 0}\bbE[\tun_A\tun_{\{(k-1)/n \leq \tau < k/n\}}]\bbE[f_1(\hat{\Pi}_{k/n,k/n+t_1})]\\
&=&\bbE[\tun_A\tun_{\{\tau<\infty\}}]\bbE[f_1(\hat{\Pi}_{0,t_1})].
\end{eqnarray*}
\end{proof}
We also introduce the lowest level associated to an ancestor.
\begin{definition}\label{DefLt}
For all $t\geq 0$ and every $i\geq 1$, we set $L_t(i) := \inf\{j\geq 1: j\in\hat{\Pi}_{0,t}(i)\}$ where $\inf\emptyset =\infty$ by convention. The r.v. $L_t(i)$ is the lowest level at time $t$ that belongs to the progeny of the $i$-th ancestor.
\end{definition}
\noindent In regime \Fin\ and from Lemma \ref{LemmaMeasurabilityCDI} we know that $\bbP$-a.s.~for every $i\geq 1$ we have $L_t(i) < \infty$ when $t\in[0,d(i))$ while $L_t(i)=\infty$ when $t\in[d(i),\infty)$.
\begin{lemma}\label{LemmaAsymptFreq}
Let $\pi$ be a deterministic partition with a unique non-singleton block whose index is $k\geq 1$. Assume that $\pi(k)$ admits an asymptotic frequency, say $u \in (0,1]$. Then for any exchangeable random partition $\pi'$ almost surely the partition $\Coag(\pi,\pi')$ admits asymptotic frequencies and we have
\[ \forall i\geq 1,\;\;|\Coag(\pi,\pi')(i)| = |\pi'(i)|(1-u) + \tun_{\{k\in\pi'(i)\}} u.\]
\end{lemma}
\begin{proof}
Since Lemma 4.6 in the book of Bertoin~\cite{BertoinRandomFragmentation} ensures that $\Coag(\pi,\pi')$ admits asymptotic frequencies almost surely, we only need to prove the asserted formula on the these frequencies. Let $\gamma(n):=\min \pi(n)$ be the smallest integer in the $n$-th block of $\pi$. In particular $\gamma(k)=k$. Since $\pi$ has an infinite number of blocks, $\gamma(n)$ goes to infinity as $n$ tends to infinity. Observe that
\[ \forall n\geq 1,\;\; \gamma(n) -\big(\#\big(\pi(k)\cap[\gamma(n)]\big)-1\big)\vee 0 = n.\]
Since $\pi(k)$ admits an asymptotic frequency equal to $u$, we deduce that $n/\gamma(n)$ goes to $1-u$ as $n$ goes to infinity. From the definition of the coagulation operator for every $i\geq 1$ if $k\in\pi'(i)$ then the block $\Coag(\pi,\pi')(i)$ contains the elements in $\pi(k)$ together with the images through $\gamma$ of the elements in $\pi'(i)$ while if $k\notin\pi'(i)$ then the block $\Coag(\pi,\pi')(i)$ contains only the images through $\gamma$ of the elements in $\pi'(i)$. Therefore we get for every $n\geq k$
\begin{equation*}
\# \Big(\Coag(\pi,\pi')(i)\bigcap[\gamma(n)]\Big) = \#\big(\pi'(i)\cap[n]\big) + \tun_{\{k\in\pi'(i)\}}\Big(\#\big(\pi(k)\cap[\gamma(n)]\big)-1\Big).
\end{equation*}
Notice that the $-1$ in the r.h.s. prevents from counting twice the element $k$. Since the blocks of $\pi'$ have asymptotic frequencies almost surely and since $n/\gamma(n)\rightarrow 1-u$ we get the asserted equality of the lemma by dividing both members of the above formula by $\gamma(n)$ and taking the limit as $n\rightarrow\infty$.
\end{proof}

\subsection{Regime \Disc}
Recall from Proposition \ref{PropEvesLD} that the primitive Eve $\rme^1$ of Bertoin and Le Gall - which does exist without any condition on $\Lambda$ - is almost surely equal to $\xi_0(1)$. Consider the process $t\mapsto\rho_t([0,1]\backslash\{\rme^1\})$. Theorem 4 of Bertoin and Le Gall~\cite{BertoinLeGall-1} shows that this process is Markov with a Feller semigroup. Since we have constructed $\rho$ with the lookdown representation, $t\mapsto\rho_t$ evolves at the reproduction events of the lookdown process. In regime \Disc, these reproduction events are finitely many on any compact interval of time. Therefore we can enumerate $\{(s,\hat{\Pi}_{s-,s}): s > 0, \hat{\Pi}_{s-,s}\ne\tO_{[\infty]}\}$ by increasing time coordinates, say $(t_i,\pi_i),i\geq 1$. For each partition $\pi_i$, we let $u_i$ be the asymptotic frequency of its unique non-singleton block (these asymptotic frequencies are well-defined on a same event of probability one, see the proof of Proposition \ref{PropRegularityFoP}). From Formula (\ref{EqMeasureMu}) we deduce that the $u_i$'s are i.i.d.~with distribution $\nu(du)/\nu([0,1))$. We work with the collection $(t_i,u_i),i\geq 1$. We then set $\rX_0:=1$ and for every $i\geq 1$, $\rX_i:=\rho_{t_i}([0,1]\backslash\{\xi_0(1)\})$. From the arguments above, we deduce that $(\rX_i,i\geq 0)$ is a Markov chain. Let us denote by $\bQ_x$ the distribution of this Markov chain when it starts from $x\in(0,1]$. The following lemma provides the transition probabilities of this chain. 
\begin{lemma}\label{LemmaMarkovChain}
Fix $x\in(0,1]$ and let $u$ be a r.v.~with law $\nu(\cdot)/\nu([0,1))$. Under $\bQ_x$ the r.v.~$\rX_1$ is distributed as follows:
\begin{equation}
\rX_1 = \begin{cases} (1-u)x &\text{ with probability } u + (1-x)(1-u)\\
(1-u)x + u &\text{ with probability } x(1-u).\end{cases}
\end{equation}
\end{lemma}
\begin{proof}
We use the lookdown representation and work under measure $\bbP$ in the proof. Fix $i\geq 1$. We have $\bbP$-a.s. $\rX_{t_i}=1-|\hat{\Pi}_{0,t_i}(1)|$ and $\hat{\Pi}_{0,t_i}=\Coag(\pi_i,\hat{\Pi}_{0,t_{i-1}})$. Let $K$ be the index of the non-singleton block of $\pi_i$. Lemma \ref{LemmaAsymptFreq} entails that $\bbP$-a.s. if $K\in\hat{\Pi}_{0,t_{i-1}}(1)$ then $|\hat{\Pi}_{0,t_i}(1)|=|\hat{\Pi}_{0,t_{i-1}}(1)|(1-u_i) + u_i$ while if $K\notin\hat{\Pi}_{0,t_{i-1}}(1)$ then $|\hat{\Pi}_{0,t_i}(1)|=|\hat{\Pi}_{0,t_{i-1}}(1)|(1-u_i)$. Therefore
\begin{eqnarray*}
\bbP\big(\rX_i=\rX_{i-1}(1-u_i) \,|\, u_i, \rX_{i-1}\big) \!\!\!&=&\!\!\! \bbP(K\in\hat{\Pi}_{0,t_{i-1}}(1)\,|\,u_i,\rX_{i-1})\\
\bbP\big(\rX_i=\rX_{i-1}(1-u_i)+u_i \,|\, u_i,\rX_{i-1}\big) \!\!\!&=&\!\!\! \bbP(K\notin\hat{\Pi}_{0,t_{i-1}}(1)\,|\,u_i,\rX_{i-1})
\end{eqnarray*}
Observe that $\bbP$-a.s. $\{K\in\hat{\Pi}_{0,t_{i-1}}(1)\}=\{\xi_{t_{i-1}}(K)=\xi_0(1)\}$. Since $u_i$ has law $\nu(\cdot)/\nu([0,1))$ and is independent from $\rX_{i-1}$, the proof boils down to determining
\[ \bbP(\xi_{t_{i-1}}(K)=\xi_0(1)\,|\,u_i,\rX_{i-1}).\]
Since $\pi_i$ is an exchangeable random partition independent of the lookdown process up to time $t_{i-1}$ and since $u_i$ is the asymptotic frequency of its unique non-singleton block, we have for all $k\geq 1$
\[ \bbP\big(K=k \,|\, (\xi_{t_{i-1}}(j))_{j\geq 1}, u_i\big)=(1-u_i)^{k-1}u_i.\]
On the event where $K=1$ the parent of the reproduction event is of type $\xi_{t_{i-1}}(1)$. Recall that $\xi_{t_{i-1}}(1)=\xi_0(1)$ since the first particle of the lookdown process is constant. On the event where $K=k\geq 2$ the parent of the reproduction event is of type $\xi_{t_{i-1}}(k)$. Proposition 3.1 of Donnelly and Kurtz~\cite{DK99} ensures that $(\xi_{t_{i-1}}(j),j\geq 1)$ is an exchangeable sequence of r.v. Its empirical measure is $\bbP$-a.s. equal to $\rho_{t_{i-1}}$. Consequently
\[ \forall k\geq 2,\;\;\bbP\big(\xi_{t_{i-1}}(k)=\xi_{t_{i-1}}(1) \,|\, u_i, \rX_{i-1}\big) = 1-\rX_{i-1}.\]
Therefore we get
\begin{eqnarray*}
\bbP(\xi_{t_{i-1}}(K)=\xi_0(1)\,|\,u_i,\rX_{i-1}) \!\!\!&=&\!\!\! \sum_{k=1}^{\infty}\bbP(K=k | (\xi_{t_{i-1}}(j))_{j\geq 1},u_i)\\
&&\hspace{1cm}\times\bbP(\xi_{t_{i-1}}(k)=\xi_{t_{i-1}}(1) | u_i, \rX_{i-1})\\
\!\!\!&=&\!\!\! u_i + (1-\rX_{i-1})(1-u_i).
\end{eqnarray*}
This ends the proof of the lemma.
\end{proof}
\begin{proposition}\label{PropRegime1}
There exists a random time $T > 0$ after which $\bbP$-a.s.~the process $t\mapsto\rho_t(\{\xi_0(1)\})$ makes only positive jumps. In other terms, eventually all the reproduction events choose a parent of type $\xi_0(1)$.
\end{proposition}
\begin{proof}
We work on $(\Omega,\ccF,\bbP)$. We introduce the random variables $\tau_0:=\inf\{i\geq 0: \rX_i<1/2\}$ and recursively for every $n\geq 0$, $r_n:=\inf\{i > \tau_n: \rX_i-\rX_{i-1}>0\}$ and $\tau_{n+1}:=\inf\{i>r_n: \rX_i < 1/2\}$. We use the convention $\inf\emptyset = \infty$. In words, $\tau_0$ is the first time the process $\rX$ hits $(0,1/2)$ and $r_0$ is the first time after $\tau_0$ the process $\rX$ makes a positive jump. Recursively $\tau_{n+1}$ is the first time after $r_n$ the process $\rX$ hits again $(0,1/2)$, and $r_{n+1}$ is the time of the next positive jump. Set ${\cal F}_{k}:=\sigma(\rX_i, 0 \leq i \leq k)$. Recall that the objective is to prove that $\bbP$-a.s.~$\rX$ eventually makes only negative jumps. The proof of the proposition therefore boils down to showing that $\bbP$-a.s.~the sequence $(r_n)_{n\geq 0}$ eventually equals $+\infty$. The transition probabilities of the chain yields that $x\mapsto\bQ_x(\rX\mbox{ makes only negative jumps})$ is decreasing. Thus for all $n\geq 0$, $\bbP$-a.s.
\begin{eqnarray*}
\bbP(r_{n+1} = \infty\, |\, {\cal F}_{\tau_{n+1}})\mathbf{1}_{\{r_n<\infty\}}&=& \bQ_{\rX_{\tau_{n+1}}}(\rX\mbox{ makes only negative jumps})\mathbf{1}_{\{r_n<\infty\}}\\
&\geq& \bQ_{1/2}(\rX\mbox{ makes only negative jumps})\mathbf{1}_{\{r_n<\infty\}}.
\end{eqnarray*}
Consequently for all $n\geq 0$
\[ \bbP(r_n < \infty) \leq \big(1-\bQ_{1/2}(\rX\mbox{ makes only negative jumps})\big)^{n+1} .\]
It is therefore sufficient to show that $\bQ_{1/2}(\rX\mbox{ makes only negative jumps})$ is strictly positive in order to prove the proposition. From the transition probabilities of the chain, this quantity is equal to
\begin{eqnarray*}
\bbE\bigg[\prod_{i\geq 1} \Big(u_i + (1-u_i)\big(1-\frac{1}{2}\prod_{j=1}^{i-1}(1-u_j)\big)\Big)\bigg]
&\geq& \bbE\bigg[\prod_{i\geq 1} \big(1-\frac{1}{2}\prod_{j=1}^{i-1}(1-u_j)\big)\bigg]\\
&\geq&\bbE\bigg[\exp\Big(\sum_{i\geq 1} \log\big(1-\frac{1}{2}\prod_{j=1}^{i-1}(1-u_j)\big)\Big)\bigg]
\end{eqnarray*}
Let us now prove that the negative r.v.~inside the exponential is finite almost surely. Its expectation is given by
\begin{eqnarray}\label{EquationSumProd}
\bbE\bigg[\sum_{i\geq 1} \log\Big(1-\frac{1}{2}\prod_{j=1}^{i-1}(1-u_j)\Big) \bigg] &\geq& \bbE\bigg[-\frac{c}{2}\,\sum_{i\geq 1}\prod_{j=1}^{i-1}(1-u_j) \bigg]\\\nonumber
&\geq& -\frac{c}{2}\, \sum_{i\geq 1} \bbE\bigg[\prod_{j=1}^{i-1}(1-u_j)\bigg]
\end{eqnarray}
where $c$ is a positive constant such that $\log(1-y) \geq -c\,y$ for all $y\in [0,1/2]$. We have used the monotone convergence theorem to go from the first to the second line. We get
\[ \bbE\bigg[\prod_{j=1}^{i-1}(1-u_j)\bigg] = \bigg(\frac{\int_{[0,1)}(1-u)\,\nu(du)}{\int_{[0,1)}\nu(du)} \bigg)^{i-1}.\]
The measure $\nu$ is supported by $(0,1)$ since we are in regime \Disc, consequently the r.h.s.~is strictly smaller than $1$. This ensures that the series at the second line of (\ref{EquationSumProd}) converges, which in turn implies the almost sure finiteness of the negative r.v.~inside the exponential above. Therefore
\[\bQ_{1/2}(\rX\mbox{ makes only negative jumps}) > 0.\]
\end{proof}
\textit{Proof of Theorem \ref{ThEvesDust} for regime \Disc.} We know from Lemma \ref{LemmaAsymptFreq} and Proposition \ref{PropRegime1} that $\bbP$-a.s.~for every $i\geq 1$ if $t_i > T$ then $|\hat{\Pi}_{0,t_i}(1)|=(1-u_i)|\hat{\Pi}_{0,t_{i-1}}(1)|+u_i$ and $|\hat{\Pi}_{0,t_i}(2)|=(1-u_i)|\hat{\Pi}_{0,t_{i-1}}(2)|$. Consequently $\bbP$-a.s.~for every $i\geq 1$ if $t_i > T$ then $\rho_{t_i}([0,1]\backslash\{\xi_0(1)\})=(1-u_i)\rho_{t_{i-1}}([0,1]\backslash\{\xi_0(1)\})$ and $\rho_{t_i}(\{\xi_0(2)\})=(1-u_i)\rho_{t_{i-1}}(\{\xi_0(2)\})$. Moreover Proposition \ref{PropClassif} entails that $\bbP$-a.s.~for all $t\in\bbQ_+$ the measure $\rho_t$ has dust and therefore $\rho_t(\{\xi_0(2)\})<\rho_t([0,1]\backslash\{\xi_0(1)\})$. Consequently $\bbP$-a.s.~the process
\begin{equation}\label{EqProcessRatio}
\frac{\rho_t(\{\xi_0(2)\})}{\rho_t([0,1]\backslash\{\xi_0(1)\})},t\geq 0
\end{equation}
is constant after time $T$ and is strictly lower than $1$ so that the $\Lambda$-Fleming-Viot process does not admit a second Eve.\cqfd

\subsection{Regime \Dust}
Recall that for every $n\geq 2$, $L_t(n)$ is defined as  the smallest integer in $\hat{\Pi}_{0,t}(n)$ - or equivalently as the lowest level with type $\xi_0(n)$ at time $t\geq 0$. For every $t\geq 0$ we let $C_t(n)$ be equal to $1$ if $L_s(n),s\in[0,t]$ has been chosen as the parent of a reproduction event, $0$ otherwise. In other terms $C_t(n)$ equals $1$ if and only if a reproduction event has chosen a parent of type $\xi_0(n)$ on the time interval $[0,t]$. Let us describe the dynamics of the pair $(L_t(n),C_t(n), t\geq 0)$. For every $k\geq 2$ and every $l\geq 1$ we set
\[ \tq(k,k+l) := \sum_{i=2}^{(l+1)\wedge k}\binom{k}{i}\binom{l-1}{l+1-i}\lambda_{k+l,l+1}\]
where $\lambda_{k+l,l+1}$ is defined in Formula (\ref{EqLambdank}).
\begin{lemma}
The process $t\mapsto(L_t(n),C_t(n))$ is a continuous time Markov chain with values in $\bbN\times\{0,1\}$. For every $k\geq 2$ and every $l\geq 1$ the transition rates are given by
\begin{eqnarray*}
(k,0) \rightarrow \begin{cases}(k+l,0) &\text{ at rate } \tq(k,k+l)\\
(k,1) &\text{ at rate } \lambda_{k,1}\end{cases}\;\;\;\text{and}\;\;\;
(k,1) \rightarrow (k+l,1) \text{ at rate } \tq(k,k+l)
\end{eqnarray*}
\end{lemma}
\begin{proof}
This is a consequence of the transitions of the lookdown process given at Definition (\ref{DefDetLDProcess}). Indeed suppose that the process $t\mapsto L_t(n)$ is currently at level $k$. It jumps to a higher level if a reproduction event involves at least two levels among $[k]$. This higher level equals $k+l$ if:\begin{itemize}
\item $i$ levels are involved among the $[k]$ first, with $i\in\{2,\ldots,(l+1)\wedge k\}$,
\item $l+1-i$ levels are involved among $\{k+1,\ldots,k+l-1\}$,
\item level $k+l$ is not involved.
\end{itemize}
This yields the rate $\tq(k,k+l)$. Concerning the second coordinate, observe that level $k$ is chosen as the parent of a reproduction event at rate $\lambda_{k,1}$. Once the second coordinate reaches $1$ it does not evolve any more by definition.
\end{proof}
This lemma has two consequences. First the process $(L_t(n),t\geq 0)$ is a continuous time Markov chain. Second conditionally given this process, the probability that $C_\infty(n):=\lim\limits_{t\rightarrow\infty} C_t(n)$ equals $0$ is given by
\[ \bbP\big(C_\infty(n)=0\,\big|\, (L_t(n),t\geq 0)\big) = \exp\Big(-\int_0^\infty\lambda_{L_t(n),1}dt\Big).\]
Finally let us observe that the map $k\mapsto\lambda_{k,1}$ is decreasing.
\begin{proposition}\label{PropRegime2}
Consider regime \Dust\ and assume that $\int_{[0,1)}x\log\frac{1}{x}\,\nu(dx)<\infty$. Almost surely there exists at least one initial type whose frequency remains null.
\end{proposition}
\begin{remark}
A simple adaptation of the proof actually shows that there exists an infinity of initial types whose frequencies remain null.
\end{remark}
\begin{proof}
The frequency of $\xi_0(n)$ remains null if $L(n)$ is never chosen as the parent of any reproduction event or equivalently if $C_\infty(n)=0$. To prove the proposition, it suffices to show the $\bbP$-a.s. existence of an integer $n\geq 1$ such that $C_{\infty}(n)=0$. First we claim the existence of an integer $n_0\geq 2$ and of a real value $w\in(0,1]$ such that
\begin{equation}\label{EqClaimLtn}
\forall n\geq n_0,\;\; \bbP\big(C_\infty(n)=0\big)=\bbE\bigg[\exp\Big(-\int_0^\infty\!\!\lambda_{L_t(n),1}\,dt\Big)\bigg] \geq w.
\end{equation}
We postpone the proof of this claim below and complete the proof of the proposition. First observe that $\bbP$-a.s.~the sets $\{C_\infty(n)=0\}$ and $\{\forall t\geq 0, \hat{\Pi}_{0,t}(n)\text{ is a singleton}\}$ coincide. We stress that the latter set $\bbP$-a.s.~coincides with $\{\forall t\geq 0,|\hat{\Pi}_{0,t}(n)|=0\}$. One inclusion is trivial since a singleton has null frequency. Let us prove the other inclusion. By exchangeability $\bbP$-a.s.~for all $t\in\bbQ_+^*$ the block $\hat{\Pi}_{0,t}(n)$ is either a singleton or admits a strictly positive asymptotic frequency. Thus  $\bbP$-a.s.~on $\{\forall t\geq 0,|\hat{\Pi}_{0,t}(n)|=0\}$ the partition $\hat{\Pi}_{0,t}(n)$ is a singleton for all rational times $t$. We extend this property to all times $t\geq 0$ thanks to the right continuity of the blocks. The converse inclusion follows. Consequently $\bbP$-a.s.~the set $\{C_\infty(n)=0\}$ coincides with $\{\forall t\geq 0,|\hat{\Pi}_{0,t}(n)|=0\}$.\\
Consider the integer $n_0$ of (\ref{EqClaimLtn}). Let $r_0:=\inf\{t\geq 0: |\hat{\Pi}_{0,t}(n_0)| > 0\}$ be the first time at which the $n_0$-th block gets a positive frequency. On the event where $r_0 < \infty$, we let $n_1:=\inf\{n>n_0: |\hat{\Pi}_{0,r_0}(n)|=0\}$ be the lowest level $n$ such that $\xi_0(n)$ has never reproduced on $[0,r_0]$. Observe that $n_1$ is $\bbP$-a.s. finite on the event $r_0<\infty$ since on this event $\bbP$-a.s.~the partition $\hat{\Pi}_{0,r_0}$ has singleton blocks. Then recursively we define for every $k\geq 1$, $r_k:= \inf\{t\geq 0: |\hat{\Pi}_{0,t}(n_k)| > 0\}$ and on the event where $r_k < \infty$, $n_{k+1}:=\inf\{n>n_k: |\hat{\Pi}_{0,r_k}(n)|=0\}$. Here again on the event where $r_k<\infty$, the r.v.~$n_{k+1}$ is $\bbP$-a.s.~finite thanks to the same argument. To prove the proposition we need to show that $\bbP$-a.s. there exists $k\geq 1$ such that $r_k=\infty$. From (\ref{EqClaimLtn}) we have
\[ \bbP(r_0 =\infty)= \bbP\big(C_\infty(n_0)=0\big) \geq w.\]
For every $k\geq 0$, $r_k$ is a stopping time in the filtration $\ccF_t,t\geq 0$ of the flow of partitions. Lemma \ref{LemmaMarkovFoP} entails that on the event where $r_k < \infty$ the process $(\hat{\Pi}_{r_k,r_k+t},t\geq 0)$ has the same distribution as $(\hat{\Pi}_{0,t},t\geq 0)$ so that (\ref{EqClaimLtn}) does also hold for this process:
\begin{equation*}
\forall n\geq n_0,\;\; \bbP\Big(\forall t\geq 0,\,|\hat{\Pi}_{r_k,r_k+t}(n)|=0\,\big|\, r_k < \infty \Big) \geq w.
\end{equation*}
Thus we get
\[ \bbP(r_{k+1} =\infty | r_k < \infty) \geq w.\]
This easily implies that $\bbP(r_k <\infty) \leq (1-w)^{k+1}$. Since these events are nested we get $\bbP(\cap_{k\geq 0}\{r_k <\infty\}) =0$. The proof is complete.\vspace{4pt}\\
\textit{Proof of (\ref{EqClaimLtn}).} The strategy of the proof is to construct on an auxiliary probability space $(A,\cA,\bQ)$ an integer-valued process $Y=(Y_t,t\geq 0)$ which is stochastically lower than $L(n)$ for every $n\geq n_0$ with $n_0$ suitably chosen. From the description of the process $(L(n),C(n))$, the probability under $\bbP$ of $C_\infty(n)=0$ is bounded below by the expectation under $\bQ$ of $\exp\big(-\int_0^\infty\lambda_{Y_t,1}dt\big)$.\\
Let $u^*$ be a real value in $(0,1)$ such that $\Lambda\big((u^*,1)\big) > 0$ and set $a = \frac{4-u^*}{4-2u^*}$. Notice that $a > 1$. There exists $n_0 \geq 3$ such that
\begin{equation}\label{EqConstants}
0 < 1 -\frac{1-\frac{u^*}{4} - \frac{2}{n_0}}{a} < u^* \;\;\;\;,\;\;\;\;\frac{4a}{(u^*)^2n_0} < 1\;\;\;\;,\;\;\;\;an_0 \geq n_0+1.
\end{equation}
We set $u':=1 - \frac{1-\frac{u^*}{4} - \frac{2}{n_0}}{a}$. Then we claim that for every $n\geq n_0$
\begin{equation}\label{EqBoundRate}
\sum_{l=0}^{\infty}\tq(n,\lfloor an\rfloor+l) \geq \int_{(u',1)}\nu(dx)x^2\sum_{j=\lfloor an\rfloor-1-n}^{\lfloor an\rfloor-3}\binom{\lfloor an\rfloor-3}{j}x^j(1-x)^{\lfloor an\rfloor-3-j}.
\end{equation}
An analytic proof of this claim is given in Subsection \ref{AppendixProofBoundRate} but let us make the following comment. On the left, we have the total rate at which the individual at level $n$ jumps to a level above (or equal to) $\lfloor an\rfloor$ while on the right, we have the rate at which occur reproduction events satisfying:\begin{itemize}
\item The proportion of individuals involved in the reproduction event belongs to $(u',1)$.
\item The two first levels participate to the reproduction event.
\item Among levels $\{3,\ldots,\lfloor an\rfloor-1\}$ the number $j$ of levels that participate is greater than or equal to $\lfloor an\rfloor-1-n$ so that in such a reproduction event the individual at level $n$ jumps to a level above (or equal to) $\lfloor an\rfloor$.
\end{itemize}
Fix $x\in(u',1)$, $n\geq n_0$ and let $B$ be a Binomial r.v. with $\lfloor an\rfloor-3$ trials and probability of success $x$. We observe that for every given $x\in(u',1)$
\[ \sum_{j=\lfloor an\rfloor-1-n}^{\lfloor an\rfloor-3}\binom{\lfloor an\rfloor-3}{j}x^j(1-x)^{\lfloor an\rfloor-3-j}\]
is the probability that $B$ is greater than or equal to $\lfloor an\rfloor-1-n$. From the first expression in (\ref{EqConstants}) and the fact that $x\in(u',1)$ we easily verify that
\[ x(\lfloor an\rfloor-3) -\lfloor an\rfloor+1+n = n-2 -(1-x)(\lfloor an\rfloor-3) \geq n-2 -(1-x)an \geq \frac{u^*}{4}n > 0.\]
Let $\tP$ be the probability distribution of $B$. Using the Bienaym\'e-Chebyshev inequality at the third line, we get
\begin{eqnarray*}
\tP(B < \lfloor an\rfloor-1-n)&=&\tP\big(B-x(\lfloor an\rfloor-3) < \lfloor an\rfloor-1-n-x(\lfloor an\rfloor-3)\big)\\
&\leq& \tP\big(|B-x(\lfloor an\rfloor-3)| > n-2-(1-x)(\lfloor an\rfloor-3)\big)\\
&\leq& \frac{x(1-x)(\lfloor an\rfloor-3)}{\big(n-2-(1-x)(\lfloor an\rfloor-3)\big)^2}\leq \frac{4a}{(u^*)^2n}
\end{eqnarray*}
Putting these arguments together we get
\begin{eqnarray*}
\int_{(u',1)}\!\!\!\nu(dx)x^2\sum_{j=\lfloor an\rfloor-1-n}^{\lfloor an\rfloor-3}\binom{\lfloor an\rfloor-3}{j}x^j(1-x)^{\lfloor an\rfloor-3-j} &\geq& \int_{(u',1)}\nu(dx)x^2\Big(1-\frac{4a}{(u^*)^2n}\Big)\\
&\geq&\Lambda\big((u',1)\big)(1-\frac{4a}{(u^*)^2n_0})=:r.
\end{eqnarray*}
Together with (\ref{EqConstants}) and (\ref{EqBoundRate}) this ensures that the rate at which the process $L$ jumps from $n$ to a level above or equal to $\lfloor an\rfloor$ is greater than or equal to $r > 0$, uniformly for all $n\geq n_0$. We now introduce a process that starts at $n_0$ and only jumps from its current position, say $n$, to $\lfloor an\rfloor$ at rate $r$ so that it is stochastically lower than $L(n_0)$. Consider an auxiliary probability space $(A,\cA,\bQ)$ on which is defined a Poisson process with rate $r$. Denote by $0=t_0 < t_1 < t_2 < \ldots$ the jump times of this Poisson process. Still on $(A,\cA,\bQ)$ we define the process $Y=(Y_t,t\geq 0)$ as follows. Initially $Y_0=n_0$, and $Y$ stays constant on every interval $[t_k,t_{k+1})$ while its transitions are given by $Y_{t_{k+1}}=\lfloor aY_{t_{k}}\rfloor$. We then set $b:=\inf\{\lfloor an\rfloor/n, n\geq n_0\}$. Thanks to (\ref{EqConstants}) we have for every $n\geq n_0$, $\lfloor an\rfloor/n \geq (n+1)/n$. Moreover $\lfloor an\rfloor/n \rightarrow a >1$ as $n\rightarrow\infty$. Consequently $b > 1$ and thus for every $k\geq 0$, $Y_{t_k}\geq b^k$. From the bound obtained on the jump rates, we deduce that the process $Y$ is stochastically lower than the process $L(n)$ for every $n\geq n_0$. Set $w(n):=\bbP\big(C_\infty(n)=0\big)$. For every $n\geq n_0$ the expression for the Laplace transform of the exponential distribution and a simple calculation give
\begin{equation*}
w(n) \geq \bQ\Big[\exp\big(-\sum_{k\geq 0}(t_{k+1}-t_k)\lambda_{Y_{t_k},1}\big)\Big] = \exp\Big(\sum_{k=0}^{\infty}\log\Big(\frac{r}{r+\lambda_{Y_{t_k},1}}\Big) \Big).
\end{equation*}
The r.h.s.~is strictly positive if and only if $\sum_{k\geq 0}\lambda_{Y_{t_k},1} < \infty$. Using the simple inequality
\begin{equation*}
\forall m\geq 1,\;\;\lambda_{m,1}\leq \int_{(0,m^{-\frac{1}{2}}]}\!\!\!x\,\nu(dx) + (1-m^{-\frac{1}{2}})^{m-1}\int_{(0,1)}\!\!\!x\,\nu(dx)
\end{equation*}
we get
\begin{equation*}
\sum_{k= 0}^{\infty}\lambda_{Y_{t_k},1} \leq \sum_{k=0}^{\infty}\int_{\big(0,b^{-\frac{k}{2}}\big]}x\,\nu(dx)+\sum_{k=0}^{\infty}(1-Y_{t_k}^{-\frac{1}{2}})^{Y_{t_k}-1}\int_{(0,1)}\!\!\!x\,\nu(dx)
\end{equation*}
The second sum on the right converges since $Y_{t_k} \geq b^k > 1$. Thus we deduce that $\sum_{k\geq 0}\lambda_{Y_{t_k},1} < \infty$ if
\[\sum_{k=0}^{\infty}\int_{\big(0,b^{-\frac{k}{2}}\big]}x\,\nu(dx) < \infty.\]
Observe that this last quantity is equal to
\begin{eqnarray*}
\sum_{k=0}^{\infty}(k+1)\!\int_{\big(b^{-\frac{k+1}{2}},b^{-\frac{k}{2}}\big]}\!\!x\,\nu(dx)
&=&\sum_{k=0}^{\infty}\!\int_{\big(b^{-\frac{k+1}{2}},b^{-\frac{k}{2}}\big]}\!\!x\,\nu(dx)+\sum_{k=0}^{\infty}k\!\int_{\big(b^{-\frac{k+1}{2}},b^{-\frac{k}{2}}\big]}\!\!x\,\nu(dx)\\
&=&\int_{(0,1)}x\nu(dx) + \frac{2}{\log b}\sum_{k=0}^{\infty}\log\Big(b^{\frac{k}{2}}\Big)\!\!\int_{\big(b^{-\frac{k+1}{2}},b^{-\frac{k}{2}}\big]}\!\!x\,\nu(dx)
\end{eqnarray*}
which is finite since $\int_{[0,1)}x\log\frac{1}{x}\,\nu(dx)<\infty$. Therefore $w:=\inf_{n\geq n_0} w(n) > 0$.
\end{proof}
\textit{Proof of Theorem \ref{ThEvesDust} in regime \Dust.} We know from Proposition \ref{PropRegime2} that $\bbP$-a.s.~there exists $n\geq 2$ such that $\rho_t(\{\xi_0(n)\})=0$ for all $t\geq 0$ so that the $n$-th Eve is not defined.\cqfd

\subsection{Regime \Inf}\label{SubsectionBS}
When $\Lambda$ is the Lebesgue measure on $[0,1]$, one obtains the celebrated Bolthausen-Sznitman coalescent~\cite{BolthausenSznitman98}. Its $\Lambda$-Fleming-Viot counterpart belongs to regime \Inf. The proof of Proposition \ref{PropBS} relies strongly on the connection with measure-valued branching processes obtained by Bertoin and Le Gall~\cite{BertoinLeGall-0} for the Bolthausen-Sznitman coalescent, and by Birkner et al.~\cite{Article7} for all the Beta$(2-\alpha,\alpha)$ coalescents with $\alpha\in(0,2)$. We refer to Dawson~\cite{Dawson93}, Etheridge~\cite{EtheridgeSuperprocesses} and Le Gall~\cite{LeGall99} for further details on measure-valued branching processes. Let us also mention that the existence of an infinite sequence of Eves for the measure-valued branching process with the Neveu branching mechanism can be obtained thanks to the results in~\cite{DuquesneLabbe14,Labbe12}.\vspace{4pt}\\
\textit{Proof of Proposition \ref{PropBS}.} One can construct $\rho$ by rescaling a measure-valued branching process $(\rmm_t,t\geq 0)$ associated with the Neveu branching mechanism $\Psi(u)=u\log u$ as follows:
$$ \rho_t(dx) := \frac{\rmm_t(dx)}{\rmm_t([0,1])},\;\;\forall t\geq 0 $$
This result was initially stated for the Bolthausen-Sznitman coalescent by Bertoin and Le Gall in~\cite{BertoinLeGall-0}, and later on by Birkner et al. for the forward-in-time process in~\cite{Article7}. As a consequence of this connection and using Proposition \ref{PropEvesLD}, we deduce that there exists a r.v. $\rme$ such that almost surely
$$ \frac{\rmm_t(\{\rme\})}{\rmm_t([0,1])} \underset{t\rightarrow\infty}{\longrightarrow} 1 $$
The branching property ensures that the restrictions of $\rmm$ to any two disjoint subintervals of $[0,1]$ are independent. For every integer $n\geq 1$, we divide $[0,1]$ into dyadic subintervals of the form
$$ [0,2^{-n}), [2^{-n}, 2\times 2^{-n}),\ldots, [1-2^{-n},1] $$
and we consider the corresponding restrictions of $\rmm$. Obviously, for each subinterval $[(i-1)2^{-n},i\,2^{-n})$ there exists $e(i,n)$ such that
$$ \frac{\rmm_t(\{e(i,n)\})}{\rmm_t([(i-1)2^{-n},i\,2^{-n}))} \underset{t\rightarrow\infty}{\longrightarrow} 1 $$
Necessarily $\rmm$ restricted to the union of two subintervals indexed by $i\ne j \in [2^{n}]$ admits either $e(i,n)$ or $e(j,n)$ as an Eve and therefore
$$ \lim\limits_{t\rightarrow\infty}\frac{\rmm_t(\{e(i,n)\})}{\rmm_t(\{e(j,n)\})} \in \{0,+\infty\} $$
Hence one can order the $e(i,n), i\in [2^{n}]$ by asymptotic sizes. Using the consistency of the restrictions when $n$ varies, one gets the existence of a sequence $(\rme^{i})_{i\geq 1}$ fulfilling the formula of the statement.\cqfd\\
It is rather unfortunate that this simple argument does not apply to other measures in regime \Inf.

\subsection{Regime \Fin}
In this subsection, we work on the probability space $(\Omega,\ccF,\bbP)$ on which is defined the $\Lambda$ flow of partitions $\hat{\Pi}$ and the sequence of initial types $\xi_0=(\xi_0(i),i\geq 1)$ i.i.d. uniform$[0,1]$. For every $t\geq 0$ we set $\rho_t:=\ccE_{0,t}(\hat{\Pi},\xi_0)$ and we let $(\xi_t(i),t\geq 0),i\geq 1$ be the lookdown process constructed from $\hat{\Pi}$ and $\xi_0$. Recall the filtration introduced in (\ref{EqFiltration}). A classical argument, based on the right continuity and the independence of the increments of the flow of partitions, ensures that $\ccF_{0+}$ is a trivial sigma-field under $\bbP$, that is, for every $A \in \ccF_{0+}$ we have $\bbP(A) \in \{0,1\}$.\\
\textit{Proof of Proposition \ref{PropProbaE}.} Recall from Lemma \ref{LemmaMeasurabilityCDI} that $\bbP$-a.s.~for every $i\geq 1$, $d(i)$ stands for the extinction time of the initial type $\xi_0(i)$ and that it coincides with the first time at which $\hat{\Pi}_{0,t}(i)$ is empty. The r.v.~$d(i)$ is an $\ccF_t$-stopping time. We also know from Lemma \ref{LemmaMeasurabilityCDI} that $\bbP$-a.s. $d(i) \downarrow 0$ as $i\rightarrow\infty$. Then it is easy to check that $\cap_{i\geq 1}\ccF_{d(i)} = \ccF_{0+}$. The event $\tE$ of the statement can be written as follows:
\[ \tE :=\big\{ \exists i\geq 2: d(i) = d(i+1) \big\}.\]
We introduce the following collection of nested events
\[ \tE_i := \big\{\exists j\geq i:d(j) = d(j+1) \big\},\;\;i\geq 1\]
and we set $\tE_\infty := \cap_{i\geq 1}\tE_i$. Since $\tE_i \in \ccF_{d(i)}$, we deduce that $\tE_\infty\in\ccF_{0+}$ so that $\bbP(\tE_\infty)\in\{0,1\}$. To end the proof, we show that $\bbP(\tE_\infty)=\bbP(\tE)$.\\
Suppose that $\bbP(\tE_\infty)=1$. Since $\tE_\infty \subset \tE$, we deduce that $\bbP(\tE)=1$. Suppose now that $\bbP(\tE_\infty)=0$ and assume that there exists $i\geq 1$ and $p\in (0,1]$ such that $\bbP(d(i)=d(i+1))= p$. Recall Definition \ref{DefLt}. For every $k\geq i$, let $\tau_k(i) := \inf\{t\geq 0: L_t(i) \geq k\}$. This is a finite $\ccF_t$-stopping time. By consistency, $\bbP$-a.s. $L_{\tau_k(i)}(i)<L_{\tau_k(i)}(i+1)\leq +\infty$ and we have
\[\{d(i)\! =\! d(i+1)\}\!=\!\big\{(L_{\tau_k(i)+t}(i),t\geq 0) \text{ and } (L_{\tau_k(i)+t}(i+1),t\geq 0) \text{ reach }\infty \text{ simultaneously}\big\}.\]
Since $\tau_k(i)$ is a $\bbP$-a.s.~finite $\ccF_t$-stopping time, Lemma \ref{LemmaMarkovFoP} ensures that $(\hat{\Pi}_{\tau_k(i),\tau_k(i)+s},s\geq 0)$ has the same distribution as $(\hat{\Pi}_{0,s},s\geq 0)$ and is independent of $\ccF_{\tau_k(i)}$. We deduce that
\begin{eqnarray*}
\bbP(d(i) = d(i+1)) &=& \sum_{j' > j \geq k}\bbP\big(L_{\tau_k(i)}(i)=j,L_{\tau_k(i)}(i+1)=j'\big)\bbP\big(d(j) = d(j')\big)\\
&\leq& \sum_{j' > j \geq k}\bbP\big(L_{\tau_k(i)}(i)=j,L_{\tau_k(i)}(i+1)=j'\big)\bbP\big(d(j) = d(j+1)\big)\\
&\leq& \bbP(\tE_k)
\end{eqnarray*}
since $\bbP\big(d(j) = d(j+1)\big) \leq \bbP(\tE_k)$ whenever $j\geq k$. We have proved that for all $k\geq i$, $\bbP(\tE_k) \geq p > 0$. Since the events $\tE_k,k\geq i$ are nested this ensures that $\bbP(\tE_\infty) \geq p > 0$ which contradicts the hypothesis. Therefore for all $i\geq 1$, $\bbP(d(i)=d(i+1))=0$ and consequently $\bbP(\tE)=0$.\cqfd\vspace{4pt}\\
For any $n\geq 1$ and any $c > 0$ we introduce the event $\tE_{n,c} := \cup_{m\geq n}\{d\big(\lfloor(1+c)m\rfloor\big) = d(m)\}$. In order to prove Theorem \ref{ThCDI} we prove the following preliminary result. 
\begin{proposition}\label{PropPEUN}
Suppose that $\bbP(\tE)=1$. There exists $c>0$ such that $\bbP\big(\cap_{n\geq 1}\tE_{n,c})=1$.
\end{proposition}
\noindent This proposition, combined with the fact that $\bbP$-a.s.~$d(m)\downarrow 0$ as $m\rightarrow\infty$, implies that when $\bbP(\tE)=1$, there are infinitely many simultaneous extinctions of large proportions of initial types near time $0$.
\begin{proof}
The equality $\bbP(\tE)=1$ entails the existence of an integer $j\geq 2$ such that $\bbP(d(j)=d(j+1))> 0$. We claim that it implies that $\bbP(d(2)=d(3))>0$. Indeed let $T:=\inf\{t\geq 0: (L_t(2),L_t(3))=(j,j+1)\}$ and observe that $\bbP(T<\infty)>0$. By Lemma \ref{LemmaMarkovFoP} on the event $\{T<\infty\}$ the process $(\hat{\Pi}_{T,T+t},t\geq 0)$ has the same distribution as $(\hat{\Pi}_{0,t},t\geq 0)$ and is independent of $\ccF_T$. We deduce that $\bbP(d(2)=d(3)) \geq \bbP(T<\infty)\bbP(d(j)=d(j+1))$. The claim follows. Recall that $\bbP$-a.s.~$1-|\hat{\Pi}_{0,t}(1)|=\rho_t\big([0,1]\backslash\{\xi_0(1)\}\big)$. We introduce for every $n\geq 1$ the $\ccF_t$-stopping time
\[ \tau_n:=\inf\Big\{t\geq 0: \rho_t\big([0,1]\backslash\{\xi_0(1)\}\big) \leq \frac{1}{n^2}\Big\} \]
which is finite almost surely since the primitive Eve $\rme=\xi_0(1)$ fixes. We first show that $L_{\tau_n}(3)-L_{\tau_n}(2)$ becomes large as $n$ goes to infinity. To that end, we define
\[ Z_{\tau_n}:= \inf\{k>L_{\tau_n}(2): k\notin\hat{\Pi}_{0,\tau_n}(1)\} = \inf\{k>L_{\tau_n}(2): \xi_{\tau_n}(k)\ne \xi_0(1)\}. \]
Hence $L_{\tau_n}(2)$ and $Z_{\tau_n}$ are the two smallest levels at time $\tau_n$ which are not of type $\xi_0(1)$. Necessarily $\xi_{\tau_n}(Z_{\tau_n})$ is either equal to $\xi_0(2)$ or is equal to $\xi_0(3)$ in which case $Z_{\tau_n}=L_{\tau_n}(3)$. In both cases $Z_{\tau_n}$ is lower than or equal to $L_{\tau_n}(3)$. Proposition 3.1 in~\cite{DK99} ensures that the sequence $(\xi_{\tau'}(i))_{i\geq 1}$ is exchangeable with empirical measure $\rho_{\tau'}$ as soon as $\tau'$ is a stopping time in the filtration of $\rho$. This result can be extended easily to show that $(\xi_{\tau'}(i))_{i\geq 2}$ is exchangeable with empirical measure $\rho_{\tau'}$ whenever $\tau'$ is a stopping time in the filtration of the pair $\big(\,\rho,\rho(\{\xi_0(1)\})\,\big)$. We deduce that the sequence $(\xi_{\tau_n}(i))_{i\geq 2}$ is exchangeable with empirical measure $\rho_{\tau_n}$ so that conditionally given $\rho_{\tau_n}$ the $\xi_{\tau_n}(i),i\geq 2$ are i.i.d. with law $\rho_{\tau_n}$. The law of $(L_{\tau_n}(2),Z_{\tau_n})$ can then be characterised as follows.\\
Fix a real value $p\in(0,1)$ and consider an auxiliary probability space $(A,\cA,\bQ)$ on which are defined two independent geometric r.v. $G$ and $G'$ with parameter $p$, that is:
\[ \forall k\geq 1,\, \bQ(G=k)= (1-p)^{k-1}p.\]
Then the distribution under $\bbP$ of $(L_{\tau_n}(2),Z_{\tau_n})$ conditionally given $\rho_{\tau_n}\big([0,1]\backslash\{\xi_0(1)\}\big)=p$ is the distribution under $\bQ$ of $(1+G,1+G+G')$. A simple calculation yields for all $c>0$
\begin{eqnarray}\label{EqGG}
\bQ\Big(\frac{G'}{1+G} \geq c\,;\, G \geq n\Big) &=& \sum_{k=n}^{\infty}\sum_{l=c(k+1)}^{\infty}p^2(1-p)^{k+l-2}\nonumber\\
&=& \frac{p(1-p)^{c-2+n(1+c)}}{1-(1-p)^{1+c}}.
\end{eqnarray}
By the right continuity of $\rho$, $\bbP$-a.s.~$\rho_{\tau_n}\big([0,1]\backslash\{\xi_0(1)\}\big) \leq 1/n^2$. From the dominated convergence theorem and (\ref{EqGG}) we get
\[ \bbP\bigg(\frac{Z_{\tau_n}-L_{\tau_n}(2)}{L_{\tau_n}(2)} \geq c\,;\, L_{\tau_n}(2) \geq n\bigg) \underset{n\rightarrow\infty}{\longrightarrow} \frac{1}{1+c}.\]
Recall that $L_{\tau_n}(3)\geq Z_{\tau_n}$ so that
\begin{eqnarray*}
\varliminf\limits_{n\rightarrow\infty}\bbP\bigg(\frac{L_{\tau_n}(3)-L_{\tau_n}(2)}{L_{\tau_n}(2)} \geq  c\,;\, L_{\tau_n}(2) \geq n\bigg)&\geq&  \varliminf\limits_{n\rightarrow\infty}\bbP\bigg(\frac{Z_{\tau_n}-L_{\tau_n}(2)}{L_{\tau_n}(2)} \geq c\,;\, L_{\tau_n}(2) \geq n\bigg)\\
&=& \frac{1}{1+c}.
\end{eqnarray*}
We now introduce the event
\[ \tB_n := \Big\{ d(2) = d(3) \, ;\, \frac{L_{\tau_n}(3)-L_{\tau_n}(2)}{L_{\tau_n}(2)} \geq c \, ;\, L_{\tau_n}(2) \geq n \Big\}.\]
Let $\eta:=\bbP(d(2)=d(3))$ and recall we showed at the beginning of the proof that this quantity is strictly positive. Take $c > 0$ such that $\frac{1}{1+c}+\eta > 1$. There exists $\epsilon > 0$ such that $\frac{1-\epsilon}{1+c}+\eta > 1$. From the bound on the $\varliminf$ above we deduce the existence of $n_0\geq 1$ such that for all $n\geq n_0$
\begin{equation*}
\bbP\bigg(\frac{L_{\tau_n}(3)-L_{\tau_n}(2)}{L_{\tau_n}(2)} \geq  c\,;\, L_{\tau_n}(2) \geq n\bigg)\geq \frac{1-\epsilon}{1+c}.
\end{equation*}
Using the inequality $\bbP(D\cap D')\geq \bbP(D)+\bbP(D')-1$ that holds for any two events $D,D'$ we get
\begin{eqnarray*}
\bbP(\tB_n) &\geq& \bbP(d(2)=d(3)) + \bbP\bigg(\frac{L_{\tau_n}(3)-L_{\tau_n}(2)}{L_{\tau_n}(2)} \geq  c\,;\, L_{\tau_n}(2) \geq n\bigg) - 1\\
&\geq& \eta + \frac{1-\epsilon}{1+c} - 1 > 0
\end{eqnarray*}
Moreover we have
\begin{eqnarray}\label{EqBnEnc}
\tB_n =\underset{m\geq n}{\bigcup}\left\{\begin{array}{c}L_{\tau_n}(3)\geq (1+c)L_{\tau_n}(2)\,;\,L_{\tau_n}(2)=m\,;\\
(L_{\tau_n+t}(3),t\geq 0)\text{ and }(L_{\tau_n+t}(2),t\geq 0)\text{ reach }\infty\text{ simultaneously}\end{array}\right\}.
\end{eqnarray}
Since $\tau_n$ is a $\bbP$-a.s.~finite $\ccF_t$-stopping time, Lemma \ref{LemmaMarkovFoP} ensures that $(\hat{\Pi}_{\tau_n,\tau_n+s},s\geq 0)$ has the same distribution as $(\hat{\Pi}_{0,s},s\geq 0)$ and is independent of $\ccF_{\tau_n}$. This yields together with (\ref{EqBnEnc}) that for every $n\geq n_0$
\[ \bbP(\tE_{n,c}) \geq \bbP(\tB_n) \geq \eta + \frac{1-\epsilon}{1+c} - 1 > 0.\]
Since the events $\tE_{n,c},n\geq 1$ are nested, $\bbP(\cap_{n\geq 1}\tE_{n,c})=\lim_{n\rightarrow\infty}\bbP(\tE_{n,c})>0$. Finally since $\cap_{n\geq 1}\tE_{n,c}\in \ccF_{0+}$ we deduce that its probability is actually equal to $1$. The proposition is proved.
\end{proof}
\textit{Proof of Theorem \ref{ThCDI}.} Recall the function $\Psi$ from Equation (\ref{EqPsiLambda}). Since $\int^\infty\frac{du}{\Psi(u)}<\infty$, we can introduce the continuous map $t\mapsto v(t)$ as the unique solution of
$$ \int_{v(t)}^{\infty}\frac{du}{\Psi(u)} = t,\;\;\forall t > 0 $$
Proposition 15 in Berestycki et al.~\cite{CouplingBBL13} ensures that for all $\epsilon \in (0,1)$ we have
\begin{equation}\label{EqLimSupLimInf}
\bbP\bigg(\liminf\limits_{t\rightarrow 0}\frac{\#\hat{\Pi}_{0,t}}{v\big(\frac{1+\epsilon}{1-\epsilon}\,t\big)} \geq \frac{1}{1+\epsilon}\;;\;\limsup\limits_{t\rightarrow 0}\frac{\#\hat{\Pi}_{0,t}}{v\big(\frac{1-\epsilon}{1+\epsilon}\,t\big)} \leq \frac{1}{1-\epsilon}  \bigg) = 1
\end{equation}
where $\#\hat{\Pi}_{0,t}$ denotes the number of blocks of $\hat{\Pi}_{0,t}$. Assume that $\Psi$ is regularly varying at $+\infty$ with index $\alpha\in(1,2]$. Then $v$ is itself regularly varying at $0+$ with index $-1/(\alpha-1)$ (see Subsection \ref{ProofRegularVarLambda} for a proof of this fact). Consequently we have for all $\epsilon\in(0,1)$
$$ v\Big(\frac{1+\epsilon}{1-\epsilon}\,t\Big)\Big(\frac{1+\epsilon}{1-\epsilon}\Big)^{1/(\alpha-1)} \underset{t\downarrow 0}{\sim} v\Big(\frac{1-\epsilon}{1+\epsilon}\,t\Big)\Big(\frac{1-\epsilon}{1+\epsilon}\Big)^{1/(\alpha-1)} \underset{t\downarrow 0}{\sim} v(t). $$
Together with (\ref{EqLimSupLimInf}) this yields
\begin{equation*}
\bbP\bigg(\lim\limits_{t\rightarrow 0}\,\frac{\#\hat{\Pi}_{0,t}}{v(t)} = 1 \bigg) = 1.
\end{equation*}
This forces the jumps of $t\mapsto\#\hat{\Pi}_{0,t}$ to be small near $0+$. More precisely for any $c > 0$
\begin{equation}\label{EqJumpsRho}
\bbP\bigg(\limsup\limits_{t\rightarrow 0}\,\frac{\#\hat{\Pi}_{0,t-}-\#\hat{\Pi}_{0,t}}{\#\hat{\Pi}_{0,t}} < c \bigg) = 1.
\end{equation}
Observe that $d\big(\lfloor(1+c)m\rfloor\big)=d(m)=t$ implies that $\#\hat{\Pi}_{0,t-}-\#\hat{\Pi}_{0,t} \geq c\,\#\hat{\Pi}_{0,t}$. Recall that $\bbP$-a.s.~$d(m)\downarrow 0$ as $m\rightarrow\infty$. Using (\ref{EqJumpsRho}) and the fact that the collection of events $\tE_{n,c},n\geq 1$ is nested, we deduce that for any $c>0$
\begin{equation*}
\bbP\big(\underset{n\geq 1}{\cap}\tE_{n,c}) = 0.
\end{equation*}
This identity combined with Proposition \ref{PropPEUN} entails that $\bbP(\tE)$ is not equal to $1$. Proposition \ref{PropProbaE} in turn ensures that $\bbP(\tE)=0$.\cqfd

\section{Unification}\label{SectionUnification}
Let $(\Omega,\ccF,\bbP)$ be a probability space on which is defined a $\Lambda$ flow of bridges $\rF$ according to Definition \ref{DefFoB}. We consider a modification of this flow of bridges that we still denote $\rF$ and such that for every $s\in\bbR$ the process $(\rho_{s,s+t},t\geq 0)$ is a c\`adl\`ag $\Lambda$-Fleming-Viot process. This modification does exist since the $\Lambda$-Fleming-Viot process has a Feller semigroup, see p.278 in~\cite{BertoinLeGall-1} . From now on, we assume that the measure $\Lambda$ is such that the $\Lambda$-Fleming-Viot process admits almost surely an infinite sequence of Eves. Let us emphasise the fact that we only rely on Definition \ref{DefSequenceEves} and do not restrict ourselves to the particular measures $\Lambda$ presented in Proposition \ref{PropBS} and Theorem \ref{ThCDI}.

\subsection{The evolving sequence of Eves}
For each $s\in\bbR$, we define the sequence of Eves $(\rme_s^i,i\geq 1)$ of the $\Lambda$-Fleming-Viot process $(\rho_{s,s+t},t\geq 0)$. Notice that this sequence is defined on an event of $\bbP$-probability $1$ that depends on $s$. For each $s\in\bbR$, outside this event we set an arbitrary value to the sequence $(\rme_s^i,i\geq 1)$. Let us provide a simple description of the connection between the Eves taken at two distinct times. We rely on a key property due to Bertoin and Le Gall~\cite{BertoinLeGall-1} that we now recall. Consider an exchangeable bridge $B$ and an independent sequence $V=(V_i,i\geq 1)$ of i.i.d. uniform$[0,1]$ r.v. We define an exchangeable random partition $\pi=\pi(B,V)$ by setting
\[ i \sim j \Leftrightarrow B^{-1}(V_i) = B^{-1}(V_j)\]
where $B^{-1}$ is the c\`adl\`ag inverse of $B$. For each $j\geq 1$, if the $j$-th block $\pi(j)$ is not empty then we define $V'_j:= B^{-1}(V_i)$ for an arbitrary integer $i\in\pi(j)$. If the number of blocks of $\pi$ is finite, we complete the sequence $V'$ with i.i.d. uniform$[0,1]$ r.v. Then Lemma 2 in~\cite{BertoinLeGall-1} entails that the $(V'_j,j\geq 1)$ are i.i.d. uniform$[0,1]$ and are independent of $\pi$.
\begin{proposition}\label{PropEvesFoB}
Fix $s > 0$. Define the partition $\pi=\pi(\rF_{0,s},(\rme_s^i)_{i\geq 1})$. Then the sequence $(\rme_0^j,j\geq 1)$ is independent of $\pi$ and $\bbP$-a.s. for every $j\geq 1$, if $\pi(j)$ is not empty then $\rme_0^j=\rF_{0,s}^{-1}(\rme_s^i)$ for any $i\in\pi(j)$.
\end{proposition}
\noindent This allows to introduce $(\hat{\Pi}_{s,t},-\infty < s \leq t < \infty)$ by setting for every $s \leq t$
\[ \hat{\Pi}_{s,t} := \pi(\rF_{s,t},(\rme_t^i)_{i\geq 1})\text{ if }s < t\;\;,\;\; \hat{\Pi}_{s,t} := \tO_{[\infty]}\text{ if }s=t.\]
\begin{proof}
Let $\Omega_{0,s}$ be an event of $\bbP$-probability one on which the definitions of the Eves at times $0$ and $s$ hold and on which for all $t\in\bbQ_+$ we have $\rF_{0,s+t}=\rF_{s,s+t}\circ\rF_{0,s}$. Recall that $(\rme_s^i,i\geq 1)$ is a sequence of i.i.d.~uniform$[0,1]$ r.v.~which only depends on $\sigma(\rF_{s,s+t},t\geq 0)$ so that $(\rme_s^i,i\geq 1)$ is independent of $\rF_{0,s}$ from the independence of the increments of the flow of bridges. The sequence $(\rme_s^i,i\geq 1)$ plays the r\^ole of the sequence $(V_i,i\geq 1)$ above. We introduce $K$ as the random number of blocks of $\pi$ which is $\bbP$-a.s. finite in the \CDIPE\ (the bridge $\rF_{0,s}$ has a finite number of jumps and no drift) while it is $\bbP$-a.s. infinite in the \BS\ (the bridge $\rF_{0,s}$ has an infinite number of jumps and possibly a drift). For every $j\in[K]$ we set $V'_j := \rF^{-1}_{0,s}(\rme^{i_j}_s)$ where $i_j:=\min \pi(j)$. If $K$ is finite, we set $V'_j := \rme^{j}_0$ for all $j > K$. To prove the proposition, it remains to show that:
\begin{enumerate}[(i)]
\item $\bbP$-a.s. $\rme^{j}_0 = V'_j$ for every $j \in [K]$,
\item $(\rme^{j}_0)_{j \geq 1}$ is independent of $\pi$.
\end{enumerate}
We start with the first assertion. We argue deterministically on the event $\Omega_{0,s}$ of $\bbP$-probability one. We claim that for every $j \in [K]$ and all $t \in\bbQ_+$
\begin{equation}\label{EquationBounds}
\rho_{s,s+t}\big(\{\rme^{i_j}_s\}\big) \leq \rho_{0,s+t}\big(\{V'_j\}\big) \leq \rho_{s,s+t}\big([0,1]\backslash\{\rme^{1}_s,\ldots,\rme^{i_j-1}_s\}\big).
\end{equation}
Let us prove (\ref{EquationBounds}). Recall that $V'_j$ is the pre-image of $\rme_s^{i_j}$ through $\rF_{0,s}$ so that $V'_j$ is either the location of a jump of the bridge or it is a point of continuous increase. First assume that $V'_j$ is the location of a jump of $\rF_{0,s}$. This yields that $\rF_{0,s}(V'_j) \geq \rme_s^{i_j} > \rF_{0,s}(V'_j-)$. Consequently for all $t\in\bbQ_+$ $\rF_{0,s+t}(V'_j) \geq \rF_{s,s+t}(\rme_s^{i_j}) \geq \rF_{s,s+t}(\rme_s^{i_j}-) \geq \rF_{0,s+t}(V'_j-)$ and the first inequality of (\ref{EquationBounds}) follows. To prove the second inequality, observe that for every $l\in[i_j-1]$, $\rme_s^l$ does not belong to $(\rF_{0,s}(V'_j-),\rF_{0,s}(V'_j)]$. Consequently for all $t\in\bbQ_+$
\[ \rho_{0,s+t}\big(\{V'_j\}\big)=\rho_{s,s+t}\big((\rF_{0,s}(V'_j-),\rF_{0,s}(V'_j)]\big) \leq 1-\sum_{l=1}^{i_j-1}\rho_{s,s+t}(\{\rme_s^{l}\})\]
and the second inequality follows. Assume now that $\rF_{0,s}$ is continuous but increasing at $V'_j$. Then $\rF_{0,s}(V'_j) = \rme_s^{i_j} = \rF_{0,s}(V'_j-)$. Since $\rF_{0,s}$ is increasing at $V'_j$ we have for all $t\in\bbQ_+$ $\rF_{0,s+t}(V'_j-)=\rF_{s,s+t}(\rme_s^{i_j}-)$ and we get $\rho_{0,s+t}\big(\{V'_j\}\big)=\rho_{s,s+t}\big(\{\rme_s^{i_j}\}\big)$. The second equality of (\ref{EquationBounds}) derives from the fact that for every $l\in[i_j-1]$, $\rme_s^{l}\ne\rme_s^{i_j}$ so that
\[ \rho_{0,s+t}\big(\{V'_j\}\big)=\rho_{s,s+t}\big(\{\rme_s^{i_j}\}\big) \leq 1-\sum_{l=1}^{i_j-1}\rho_{s,s+t}(\{\rme_s^{l}\}).\]
Therefore (\ref{EquationBounds}) is proved. We now carry out the proof of Assertion \rm{(i)}, we treat separately two cases. We start with the \CDIPE. The bridge $\rF_{0,s}$ has no drift and $K$ jumps. These $K$ jumps are located on the points $(V'_j)_{j \in [K]}$. Necessarily the $K$ first Eves are the jump locations of $\rF_{0,s}$ so that the sets $\{V'_j,j \in [K]\}$ and $\{\rme^{j}_0,j\in[K]\}$ coincide. By definition of the Eves, the processes $t\mapsto\rho_{s,s+t}([0,1]\backslash\{\rme^{1}_s,\ldots,\rme^{i_j-1}_s\})$ and $t\mapsto\rho_{s,s+t}(\{\rme^{i_j}_s\})$ reach $0$ at the same time. From (\ref{EquationBounds}) we conclude that $t\mapsto\rho_{0,s+t}(\{V'_j\})$ reaches $0$ also at that time. Hence, the collection $(V'_j)_{j\in[K]}$ is ordered by decreasing extinction times and we conclude that $V'_j = \rme^{j}_0$ for every $j\in [K]$.
We turn to the \BS. The definition of the Eves implies that for every $j \geq 1$
\begin{equation*}
\lim\limits_{t\rightarrow\infty}\displaystyle\frac{\rho_{s,s+t}\big(\{\rme^{i_j}_s\}\big)}{\rho_{s,s+t}\big([0,1]\backslash\{\rme^{1}_s,\ldots,\rme^{i_j-1}_s\}\big)}=1
\end{equation*}
The arguments used in the proof of (\ref{EquationBounds}) can be adapted to show that for all $t\in\bbQ_+$
\[ \rho_{s,s+t}\big(\{\rme^{1}_s,\ldots,\rme^{i_j-1}_s\}\big) \leq \rho_{0,s+t}\big(\{V'_1,\ldots,V'_{j-1}\}\big) \]
Together with (\ref{EquationBounds}) this yields
\begin{equation*}
\lim\limits_{\substack{t\rightarrow \infty\\t\in\bbQ}}\frac{\rho_{0,s+t}(\{V'_j\})}{\rho_{0,s+t}\big([0,1]\backslash\{V'_1,\ldots,V'_{j-1}\}\big)} \geq \lim\limits_{\substack{t\rightarrow \infty\\t\in\bbQ}}\frac{\rho_{s,s+t}(\{\rme^{i_j}_s\})}{\rho_{s,s+t}\big([0,1]\backslash\{\rme^{1}_s,\ldots,\rme^{i_j-1}_s\}\big)} = 1
\end{equation*}
By the uniqueness of the Eves we get $V'_j = \rme^{j}_0$ for all $j \geq 1$. The first assertion is proved.\\
We turn to Assertion \rm{(ii)}. In the \BS, this assertion is a consequence of the key property of Bertoin and Le Gall and of the first assertion since $K=\infty$ $\bbP$-a.s. We consider the \CDIPE. Let $\ccG_s:=\sigma(\rF_{r,t},-\infty < r \leq t \leq s)$, implicitly we augment this sigma-field with the $\bbP$-null sets. Observe that on the event $\{K=k\}$, the Eves $(\rme_0^j)_{j>k}$ have extinct progenies at time $s$ so that they are measurable in the sigma-field generated by $(\rF_{0,r},0 \leq r \leq s)$. Consequently $\tun_{\{K=k\}}(\rme_0^j)_{j>k}$ is $\ccG_s$-measurable. For every $k\geq 1$, we define a sigma-field on $\{K=k\}$ by setting
\[ \cB_k := \big\{A\cap\{K=k\}: A \in \sigma(\rF_{0,s},(\rme^j_s)_{j\geq 1})\big\}.\]
We claim that $(\rme^j_0)_{j> k}$ is independent of $\cB_k$. Indeed let $h$ be a bounded measurable map on $\bbD([0,1],[0,1])$ and let $g_0,g_s$ be two bounded measurable maps on $[0,1]^\bbN$ (endowed with the product sigma-field). 
\begin{eqnarray*}
\bbE\Big[g_0\big((\rme^j_0)_{j> k}\big) h(\rF_{0,s})g_s\big((\rme^j_s)_{j\geq 1}\big)\tun_{\{K=k\}} \Big] \!\!\!\!\!\!&=&\!\!\!\!\!\! \bbE\bigg[\bbE\Big[g_0\big((\rme^j_0)_{j> k}\big) h(\rF_{0,s})g_s\big((\rme^j_s)_{j\geq 1}\big)\tun_{\{K=k\}}|\ccG_s \Big]\bigg]\\
\!\!\!\!\!\!&=&\!\!\!\!\!\!\bbE\Big[g_0\big((\rme^j_0)_{j> k}\big) h(\rF_{0,s})\tun_{\{K=k\}}\Big]\bbE\big[g_s\big((\rme^j_s)_{j\geq 1}\big)\big]\\
\!\!\!\!\!\!&=&\!\!\!\!\!\!\bbE\Big[g_0\big((\rme^j_0)_{j> k}\big)\Big]\bbE\Big[h(\rF_{0,s})\tun_{\{K=k\}}\Big]\bbE\big[g_s\big((\rme^j_s)_{j\geq 1}\big)\big]
\end{eqnarray*}
where the second equality comes from the independence of the increments of a flow of bridges and the $\ccG_s$-measurability of $\tun_{\{K=k\}}(\rme^j_0)_{j> k}$, while the third equality comes from Corollary \ref{CorDistribEves} and the fact that on $\{K=k\}$ the bridge $\rF_{0,s}$ only depends on $(\rme_0^i)_{i\leq k}$ and $(\rho_{0,s}(\{\rme_0^i\}))_{i\leq k}$. The claimed independence follows. We now prove the independence of $(\rme_0^j)_{j\geq 1}$ with $\pi$. Let $m\geq 1$ be an integer, $f_1,\ldots,f_m$ be $m$ bounded measurable maps on $[0,1]$ and $1 \leq i_1 < i_2 < \ldots < i_m$ be $m$ integers. Let also $\phi$ be a bounded measurable map on $\ccP_\infty$. Let $\bbP_k:=\bbP(\cdot\,|\,\{K=k\})$. From the claim proved above, for every $k\geq 1$ we have
\[ \bbE_k\Big[\prod_{j: i_j > k} f_j(\rme_0^{i_j})\, | \, \cB_k\Big] = \bbE\Big[\prod_{j: i_j > k} f_j(\rme_0^{i_j})\Big].\]
Since on $\{K=k\}$ the r.v.~$\pi$ and $(\rme_0^{j})_{j\leq k}$ are $\cB_k$-measurable, we obtain
\begin{eqnarray*}
\bbE\Big[\prod_{j=1}^m f_j(\rme_0^{i_j})\phi(\pi)\Big] &=& \sum_{k=1}^\infty \bbE_k\Big[\prod_{j=1}^m f_j(\rme_0^{i_j})\phi(\pi)\Big]\bbP(K=k)\\
&=& \sum_{k=1}^\infty \bbE_k\Big[\bbE_k\Big[\prod_{j=1}^m f_j(\rme_0^{i_j})\phi(\pi)\, | \, \cB_k\Big]\Big]\bbP(K=k)\\
&=& \sum_{k=1}^\infty\bbE\Big[\prod_{j: i_j > k} f_j(\rme_0^{i_j})\Big]\bbE\Big[\prod_{j: i_j \leq k} f_j(\rme_0^{i_j})\phi(\pi)\tun_{\{K=k\}}\Big]
\end{eqnarray*}
Then we apply the key property of Bertoin and Le Gall together with Assertion \rm{(i)} to obtain
\begin{eqnarray*}
\bbE\Big[\prod_{j=1}^m f_j(\rme_0^{i_j})\phi(\pi)\Big] &=& \sum_{k=1}^\infty\bbE\Big[\prod_{j:i_j> k} f_j(\rme_0^{i_j})\Big]\bbE\Big[\prod_{j:i_j \leq k} f_j(\rme_0^{i_j})\Big]\bbE\Big[\phi(\pi)\tun_{\{K=k\}}\Big]\\
&=& \bbE\Big[\prod_{j=1}^m f_j(\rme_0^{i_j})\Big]\bbE\Big[\phi(\pi)\Big]
\end{eqnarray*}
Assertion \rm{(ii)} follows. This ends the proof of the proposition.
\end{proof}
\textit{Proof of Theorem \ref{ThFoP}.}  The cocycle property is trivially fulfilled if $r=s$ or $s=t$ thus we fix $r < s < t$ and prove the cocyle property. We argue deterministically on the event of $\bbP$-probability one on which $\rF_{r,t}=\rF_{s,t}\circ\rF_{r,s}$ and on which the definition of the Eves at times $r$, $s$ and $t$ hold. Fix two integers $i,j$ and let $k_i,k_j$ be the integers such that $\rF_{s,t}^{-1}(\rme_t^i)=\rme_s^{k_i}$ and $\rF_{s,t}^{-1}(\rme_t^j)=\rme_s^{k_j}$. Observe that
\[ \rF_{r,t}^{-1}(\rme_t^i) = \rF_{r,t}^{-1}(\rme_t^j) \Leftrightarrow \rF_{r,s}^{-1}\circ\rF_{s,t}^{-1}(\rme_t^i)=\rF_{r,s}^{-1}\circ\rF_{s,t}^{-1}(\rme_t^j).\]
The r.h.s.~is equivalent with
\[ k_i=k_j \text{ or } \rF_{r,s}^{-1}(\rme_s^{k_i}) = \rF_{r,s}^{-1}(\rme_s^{k_j}).\]
Consequently we have proved that $i$ and $j$ are in a same block of $\hat{\Pi}_{r,t}$ if and only if they are in a same block of $\hat{\Pi}_{s,t}$ or the indices of their respective blocks in $\hat{\Pi}_{s,t}$, say $k_i$ and $k_j$, are in a same block of $\hat{\Pi}_{r,s}$. The cocycle property follows.\\
Theorem 1~\cite{BertoinLeGall-1} ensures that for every $s\in\bbR$, $(\hat{\Pi}_{s-t,s},t\geq 0)$ is an exchangeable coalescent. Since the flow of bridges is associated with the measure $\Lambda$, we deduce that $(\hat{\Pi}_{-t,0},t\geq 0)$ is a $\Lambda$-coalescent. Another consequence of this fact is that $\hat{\Pi}_{s,t}$ is an exchangeable random partition whose distribution only depends of $t-s$.\\
Fix $s_1 < s_2 < \ldots < s_n$. If we prove that $\hat{\Pi}_{s_{n-1},s_n}$ is independent of $\hat{\Pi}_{s_1,s_2},\ldots,\hat{\Pi}_{s_{n-2},s_{n-1}}$, then an easy induction allows to prove the independence of $\hat{\Pi}_{s_1,s_2},\ldots,\hat{\Pi}_{s_{n-1},s_n}$. For every $i\in[n-1]$, $\hat{\Pi}_{s_i,s_{i+1}}$ is a $\sigma(\rF_{s_i,s_{i+1}},(\rme_{s_{i+1}}^{j})_{j\geq 1}))$ measurable r.v. and if $i\in[n-2]$ then $(\rme_{s_{i+1}}^{j})_{j\geq 1}$ is measurable in the sigma-field
\[ \sigma\Big( (\rme_{s_{i+2}}^j)_{j\geq 1}\; , \; (\rF_{s_{i+1},t}, s_{i+1} \leq t \leq s_{i+2}) \Big).\]
Consequently it is sufficient to prove that $\hat{\Pi}_{s_{n-1},s_{n}}$ is independent of
\[\sigma\big((\rme_{s_{n-1}}^{j})_{j\geq 1},(\rF_{s,t},-\infty < s \leq t \leq s_{n-1})\big).\]
Fix $k\geq 1$. Let $\ccG_{s_{n-1}}$ be the sigma-field generated by $(\rF_{s,t},-\infty < s \leq t \leq s_{n-1})$. Let $f_1,\ldots,f_k$ be $k$ bounded measurable maps on $[0,1]$ and let $h$ be a bounded measurable map on $\ccP_\infty$. For all $j_1,\ldots,j_k \geq 1$ and all $A \in \ccG_{s_{n-1}}$ we have
\begin{eqnarray*}
\bbE\Big[\prod_{l=1}^k f_l(\rme_{s_{n-1}}^{j_l})\,\tun_A\, h(\hat{\Pi}_{s_{n-1},s_n})\Big] &=& \bbE\bigg[\bbE\Big[\prod_{l=1}^k f_l(\rme_{s_{n-1}}^{j_l})\,\tun_A\, h(\hat{\Pi}_{s_{n-1},s_n})|\ccG_{s_{n-1}}\Big]\bigg]\\
&=& \bbE\bigg[\tun_{A}\,\bbE\Big[\prod_{l=1}^k f_l(\rme_{s_{n-1}}^{j_l})h(\hat{\Pi}_{s_{n-1},s_n})\Big] \bigg]\\
&=&\bbE\Big[\prod_{l=1}^k f_l(\rme_{s_{n-1}}^{j_l})\Big]\bbE\big[\tun_A\big]\bbE\Big[h(\hat{\Pi}_{s_{n-1},s_n})\Big]
\end{eqnarray*}
where we use the independence of the increments of a flow of bridges at the second line, and Proposition \ref{PropEvesFoB} at the third line. Hence $\hat{\Pi}_{s_{n-1},s_n}$ is independent of
\[\sigma\big((\rme_{s_{n-1}}^{j})_{j\geq 1},(\rF_{s,t},-\infty < s \leq t \leq s_{n-1})\big).\]
The asserted independence follows.\\
Finally the convergence in distribution of $\hat{\Pi}_{0,t}$ towards $\tO_{[\infty]}$ as $t\downarrow 0$ derives from Lemma 1 in~\cite{BertoinLeGall-1}. Since the limit is deterministic and since $\ccP_\infty$ is a metric space, the convergence in probability is immediate.\cqfd\vspace{4pt}\\
The arguments provided in the previous proof actually yield the following result.
\begin{corollary}\label{CorIndep}
For every $s\in\bbR$, the Eves $(\rme_s^i,i\geq 1)$ are independent of $(\hat{\Pi}_{s+r,s+t},0 \leq r \leq t)$.
\end{corollary}

\subsection{Proof of Theorem \ref{ThLDFoB}}
From the $\Lambda$ flow of bridges, we have defined a $\Lambda$ flow of partitions. The trajectories of the latter are not necessarily deterministic flows of partitions. However using the regularisation procedure of Subsection \ref{SubsubsectionStoFoPRegul}, we can consider a modification of this flow of partitions that we still denote $(\hat{\Pi}_{s,t},-\infty < s \leq t < \infty)$ and whose trajectories are deterministic flows of partitions $\bbP$-a.s. At any given time $s\in\bbR$, we use the flow of partitions and the Eves at time $s$ to define a measure-valued process thanks to the lookdown construction, Remark \ref{RemarkLookdown} and Corollary \ref{CorIndep}
\[ t\mapsto \ccE_{s,s+t}(\hat{\Pi},(\rme_s^{i})_{i\geq 1}) := \sum_{i\geq 1}|\hat{\Pi}_{s,s+t}(i)|\delta_{\rme_s^i} + \big(1-\sum_{i\geq 1}|\hat{\Pi}_{s,s+t}(i)|\Big)\,\ell.\]
Fix $s\in\bbR$. Both $(\ccE_{s,s+t}(\hat{\Pi},(\rme_s^{i})_{i\geq 1}),t\geq 0)$ and $(\rho_{s,s+t},t\geq 0)$ are $\bbP$-a.s.~c\`adl\`ag processes. To prove that they coincide $\bbP$-a.s., it suffices to show that for every $t\geq 0$, $\bbP$-a.s. $\ccE_{s,s+t}(\hat{\Pi},(\rme_s^{i})_{i\geq 1})=\rho_{s,s+t}$. Fix $t\geq 0$. Since $\rF_{s,s+t}$ is an exchangeable bridge, we know that $\bbP$-a.s. $\rF_{s,s+t}$ has a collection of jumps and possibly a drift component. Proposition \ref{PropEvesFoB} ensures that $\bbP$-a.s. the jump locations of $\rF_{s,s+t}$ are included in the set $\{\rme_s^i,i\geq 1\}$. Recall the definition of $\hat{\Pi}_{s,s+t}$ from $(\rme_{t+s}^i)_{i\geq 1}$ and $\rF_{s,s+t}$. Since $(\rme_{t+s}^i,i\geq 1)$ is independent of $\rF_{s,s+t}$ $\bbP$-a.s. each block of the partition $\hat{\Pi}_{s,s+t}$ either admits an asymptotic frequency equal to the size of one jump of $\rF_{s,s+t}$ or is a singleton. Since the $i$-th block is the collection of $j\geq 1$ such that $\rF_{s,s+t}^{-1}(\rme_{s+t}^j)=\rme_s^i$, we deduce that $\bbP$-a.s.~for every $i\geq 1$, $\rho_{s,s+t}(\{\rme_s^i\})=|\hat{\Pi}_{s,s+t}(i)|$. Consequently $\bbP$-a.s.
\[ \rho_{s,s+t} = \sum_{i\geq 1}|\hat{\Pi}_{s,s+t}(i)|\delta_{\rme_s^i} + \big(1-\sum_{i\geq 1}|\hat{\Pi}_{s,s+t}(i)|\big)\,\ell = \ccE_{s,s+t}(\hat{\Pi},(\rme_s^i)_{i\geq 1}).\]
The first assertion of Theorem \ref{ThLDFoB} is proved. We turn to the second assertion. Let $\hat{\Pi}'$ be another $\Lambda$ flow of partitions and for every $s\in\bbR$, let $\chi_s=(\chi_s(i),i\geq 1)$ be a sequence of r.v. taking distinct values in $[0,1]$. We assume that for all $s\in\bbR$, $\bbP$-a.s.
\[(\ccE_{s,s+t}(\hat{\Pi},(\rme_s^i)_{i\geq 1}),t\geq 0)=(\ccE_{s,s+t}(\hat{\Pi}',\chi_s),t\geq 0)=(\rho_{s,s+t},t\geq 0).\]
From Proposition \ref{PropEvesLD} and Remark \ref{RemarkDistinct} we deduce that $\bbP$-a.s. for every $i\geq 1$, $\chi_s(i)=\rme_s^i$.\\
Thanks to these almost sure equalities we get $\bbP$-a.s.~for all $i\geq 1$, all $s\in \bbQ$ and all $t\in\bbQ_+$
\begin{equation}\label{EqPiPi'}
\rho_{s,s+t}(\{\rme_s^i\}) = |\hat{\Pi}_{s,s+t}(i)| = |\hat{\Pi}_{s,s+t}'(i)|.
\end{equation}
We now work on an event $\Omega^*$ of $\bbP$-probability one on which (\ref{EqPiPi'}) holds true and on which both $\hat{\Pi}, \hat{\Pi}'$ satisfy the regularity properties of Proposition \ref{PropRegularityFoP}. We argue deterministically on $\Omega^*$. The regularity properties of Proposition \ref{PropRegularityFoP} ensure that the equalities (\ref{EqPiPi'}) do not only hold true at rational times but also at all times. To end the proof, it suffices to obtain that for all $s\in\bbR$, $\hat{\Pi}_{s-,s}=\hat{\Pi}_{s-,s}'$. This is a consequence of the next lemma which concludes the proof of Theorem \ref{ThLDFoB}.
\begin{lemma}\label{LemmaEquivalencePiJumps}
Let $\hat{\Pi}^{\times}$ denote indifferently either $\hat{\Pi}$ or $\hat{\Pi}'$. Let $I$ be a subset of $\mathbb{N}$. The following assertions are equivalent
\begin{enumerate}[(a)]
\item\label{Item1} $\hat{\Pi}^{\times}_{s-,s}$ has a unique non-singleton block $I$.
\item\label{Item2} For every $i\geq 1$, let $b(i)$ be the smallest integer such that $i \!=\! b(i)\! -\!(\#\{I\cap[b(i)]\}\!-\!1)\vee 0$. Then for every $i\ne \min I$, $(|\hat{\Pi}^{\times}_{s-,s+t}(i)|,t\geq 0) \!=\! (|\hat{\Pi}^{\times}_{s,s+t}(b(i))|,t\geq 0)$.
\end{enumerate}
\end{lemma}
\noindent The integer $b(i)$ can equivalently be defined as follows: $b(i)=i$ for every $i\leq \min I$, $b(i)=\inf\{n>b(i-1): n\notin I\}$ for every $i > \min I$.
\begin{proof}
Suppose \textit{(\ref{Item1})}. Recall that $\hat{\Pi}^{\times}_{s-,s+t} = \Coag(\hat{\Pi}^{\times}_{s,s+t},\hat{\Pi}^{\times}_{s-,s})$. Let $i$ be an integer distinct from $\min I$. The definition of the coagulation operator implies that for all $t\geq 0$, $\hat{\Pi}^{\times}_{s-,s+t}(i)=\hat{\Pi}^{\times}_{s,s+t}(b(i))$. Consequently the identity on the asymptotic frequencies follows.\\
Suppose \textit{(\ref{Item2})}. Since the trajectories of $\hat{\Pi}^{\times}$ are deterministic flows of partitions, we know that $\hat{\Pi}^{\times}_{s-,s}$ is a partition with at most one non-singleton block. For $i\ne\min I$ we stress that the equality $(|\hat{\Pi}^{\times}_{s-,s+t}(i)|,t\geq 0) \!=\! (|\hat{\Pi}^{\times}_{s,s+t}(b(i))|,t\geq 0)$ implies that $\hat{\Pi}^{\times}_{s-,s}(i)=\{b(i)\}$. We first prove that $b(i) \in \hat{\Pi}^{\times}_{s-,s}(i)$, let us denote by $J$ the latter block. In the \CDIPE\ for $j< j'$ the process $(|\hat{\Pi}^{\times}_{s,s+t}(j)|,t\geq 0)$ reaches $0$ strictly after $(|\hat{\Pi}^{\times}_{s,s+t}(j')|,t\geq 0)$ so that the process $(|\hat{\Pi}^{\times}_{s-,s+t}(i)|,t\geq 0)$ reaches $0$ at the same time as $(|\hat{\Pi}^{\times}_{s,s+t}(\min J)|,t\geq 0)$ reaches $0$. Consequently $\min J = b(i)$. In the \BS\ for every $j$ the ratio $|\hat{\Pi}^{\times}_{s,s+t}(j)|/\sum_{j' \geq j}|\hat{\Pi}^{\times}_{s,s+t}(j')|$ goes to $1$ as $t\rightarrow\infty$. Consequently we have $|\hat{\Pi}^{\times}_{s-,s+t}(i)| \sim |\hat{\Pi}^{\times}_{s,s+t}(\min J)|$ as $t\rightarrow\infty$. If $\min J < b(i)$ then $|\hat{\Pi}^{\times}_{s,s+t}(b(i))|$ is negligible compared with $|\hat{\Pi}^{\times}_{s-,s+t}(i)|$ as $t\rightarrow\infty$ while if $\min J > b(i)$ the converse occurs. Therefore $\min J = b(i)$. We now prove that $J = \{b(i)\}$. Suppose that there is another element $k\in J$. Then we have $|\hat{\Pi}^{\times}_{s-,s+t}(i)| \geq |\hat{\Pi}^{\times}_{s,s+t}(b(i))|+|\hat{\Pi}^{\times}_{s,s+t}(k)|$ for all $t\geq 0$. The process $(|\hat{\Pi}^{\times}_{s,s+t}(k)|,t\geq 0)$ cannot be null at all times since otherwise the Eve $\rme_s^k$ would not be well-defined. We then deduce that the equality $|\hat{\Pi}^{\times}_{s-,s+t}(i)| = |\hat{\Pi}^{\times}_{s,s+t}(b(i))|$ is not satisfied at all times, which is contradictory. Hence $J = \{b(i)\}$.\\
We have proved that for every $i\ne\min I$, $\hat{\Pi}^{\times}_{s-,s}(i)=\{b(i)\}$. This suffices to recover completely the partition $\hat{\Pi}^{\times}_{s-,s}$ and to assert that $\hat{\Pi}^{\times}_{s-,s}(\min I)=I$.
\end{proof}

\section{Appendix}\label{SectionAppendix}
\subsection{Proof of Proposition \ref{PropRegularityFoP}}\label{AppendixRegularityFoP}
Recall that $\cP$ is a $\Lambda$ lookdown graph. Without loss of generality, we can assume that for every $\omega\in\Omega$, $\cP(\omega)$ is a deterministic lookdown graph. Since $\hat{\Pi}=J^{-1}(\cP)$, we deduce that for every $\omega\in\Omega$, $\hat{\Pi}(\omega)$ is a deterministic flow of partitions. The difficulty of the proof lies in the regularity of the asymptotic frequencies.
\begin{lemma}
$\bbP$-a.s. for all $s\in\bbR$, $\hat{\Pi}_{s-,s}$ admit asymptotic frequencies.
\end{lemma}
\begin{proof}
Fix $n\geq 1$. Consider the set $\{(s,\hat{\Pi}_{s-,s}):\hat{\Pi}_{s-,s}^{[n]}\ne\tO_{[n]}\}$. A simple argument shows that this is a Poisson point process on $\bbR\times\ccP_\infty$ with intensity $dt\times\phi(m_n)$ where $\phi(m_n)$ is the pushforward of the finite measure
\[ m_n:= (\mu_\Lambda+\mu_K)\big(\,.\,\cap\cS_n\big)\]
through the map $\phi:\cS_\infty\rightarrow\cP_\infty$ that associates to an element $v\in\cS_\infty$ the partition with a unique non-singleton block equal to $\{i\geq 1: v(i)=1\}$. If we enumerate the points in $\{(s,\hat{\Pi}_{s-,s}):\hat{\Pi}_{s-,s}^{[n]}\ne\tO_{[n]}\}$ by increasing absolute time coordinate, then we get a collection $(t_i,\pi_i)_{i\geq 1}$ where $(\pi_i)_{i\geq 1}$ is a sequence of i.i.d.~exchangeable $\ccP_\infty$-valued r.v.~with distribution $\phi(m_n)/m_n(\cS_n)$. Consequently almost surely for every $i\geq 1$ the partition $\pi_i$ admits asymptotic frequencies. We deduce that with probability one all the $\hat{\Pi}_{s-,s}$ such that $\hat{\Pi}_{s-,s}^{[n]}\ne\tO_{[n]}$ admit asymptotic frequencies. Taking a countable intersection on $n\geq 1$, we get the asserted result.
\end{proof}
\begin{lemma}\label{LemmaFreqCadlag}
With probability one, for every rational value $s\in\bbQ$ and every integer $i\geq 1$, the process $(|\hat{\Pi}_{s,s+t}(i)|,t\geq 0)$ is well-defined and c\`adl\`ag.
\end{lemma}
\begin{proof}
It is sufficient to show the lemma for $s=0$, and then to take a countable intersection of events of probability one. Let us first consider regime \Fin. Let $\xi_0(i),i\geq 1$ be a sequence of i.i.d.~uniform$[0,1]$ r.v.~and consider the lookdown process $(\xi_t(i),t\geq 0), i\geq 1$ constructed from $\hat{\Pi}$ and $\xi_0(i),i\geq 1$ according to Definition \ref{DefLDProcess2}. Recall that this lookdown process is almost surely equal to that obtained with Definition \ref{DefLDProcess} using the $\Lambda$ lookdown graph $\cP$. Let us use the results recalled in the proof of Theorem \ref{ThDK}. In particular, we use the notation
\[ t\mapsto\Xi_t^{[n]}:= \frac{1}{n}\sum_{i=1}^n\delta_{\xi_t(i)},\]
introduced there. In the proof of Theorem \ref{ThDK}, we saw that almost surely $\Xi^{[n]}$ converges to a process $\Xi$ which is a c\`adl\`ag $\Lambda$-Fleming-Viot process and that for every continuous function $f$ on $[0,1]$, almost surely the process $(\int_{[0,1]}f(x)\Xi_t^{[n]}(dx),t\geq 0),n\geq 1$ is a Cauchy sequence in $\bbD([0,\infty),\bbR)$ endowed with the distance
\[ d_u(X,Y) = \int_{[0,\infty)}\!\!\!e^{-t}\sup_{s\leq t}1\wedge|X_s-Y_s|\,dt .\]
Let $f_k,k\geq 1$ be a sequence of continuous functions from $[0,1]$ into $[0,\infty)$ such that for every integer $m\geq 1$ and every integer $2\leq i < 2^m$, there exists $k\geq 1$ such that the function $f_k$ equals $1$ on $((i-1)2^{-m},i2^{-m}]$ and vanishes on $[0,1]\backslash((i-2)2^{-m},(i+1)2^{-m})$. Taking a countable union of events, we know that $\bbP$-a.s.~for all $k\geq 1$ the sequence of processes $(\int_{[0,1]}f_k(x)\Xi_t^{[n]}(dx),t\geq 0),n\geq 1$ converges for the distance $d_u$. By construction, we have $\bbP$-almost surely for every $i\leq n$ and all $t\geq 0$, $\Xi_t^{[n]}(\{\xi_0(i)\})=|\hat{\Pi}_{0,t}^{[n]}(i)|$. We now argue deterministically on an event of probability one on which the last two assertions hold true. Let $i\geq 1$ and $t_0>0$ be given. Let $i_0$ be the number of blocks in $\hat{\Pi}_{0,t_0}$. There exists $k\geq 1$ such that $f_k(\xi_0(j))=0$ for all $j\in \{1,\ldots,i_0\}\backslash\{i\}$ and $f_k(\xi_0(i))=1$. Consequently for every $t\geq t_0$ $|\hat{\Pi}_{0,t}^{[n]}(i)|=\int_{[0,1]}f_k(x)\Xi_t^{[n]}(dx)$. We deduce the convergence of $(|\hat{\Pi}_{0,t}^{[n]}(i)|,t\geq t_0)$ as $n$ goes to infinity for the metric $d_u$ restricted to $[t_0,\infty)$. Taking $t_0$ arbitrarily small, this yields the convergence of $(|\hat{\Pi}_{0,t}^{[n]}(i)|,t>0)$ for $d_u$ restricted to $(0,\infty)$. Since $\Xi$ is c\`adl\`ag and $\Xi_0$ is the Lebesgue measure, $0=\Xi_0(\{\xi_0(i)\})\geq \varlimsup_{t\downarrow 0}\Xi_t(\{\xi_0(i)\})=\varlimsup_{t\downarrow 0}|\hat{\Pi}_{0,t}(i)|$. Since $\hat{\Pi}_{0,0}(i)=\{i\}$, we deduce that $(|\hat{\Pi}_{0,t}(i)|,t\geq 0)$ is c\`adl\`ag. This ends the proof for regime \Fin.\\

We now turn to regimes \Disc, \Dust\ and \Inf. We need to adapt the proofs of Lemma 3.4 and 3.5 in~\cite{DK99} to our context but the arguments are essentially the same. We fix $i\geq 1$ and we set $\pi([i]):=\pi(1)\cup\pi(2)\ldots\cup\pi(i)$ to denote the union of the $i$ first blocks of a partition $\pi$. Recall that $\pi^{[m]}$ stands for the restriction of a partition $\pi$ to the $m$ first integers. We define $U_t:= \sum_{s\leq t}\sum_{j\geq 1}|\hat{\Pi}_{s-,s}(j)|^2$, that is, the sum of the square of the asymptotic frequencies of the non-singleton blocks of the partitions $\{\hat{\Pi}_{s-,s}:\hat{\Pi}_{s-,s}\ne\tO_{[\infty]}\}$ up to time $t$. Recall the process $(L_t(i),t\geq 0)$ from Definition \ref{DefLt}. For every $m\geq 1$, we set
\[ \cH_t^m := \sigma \{ |\hat{\Pi}_{0,s}^{[m]}([i])|, U_s , L_s(i) ;s \leq t  \}.\]
Let $\epsilon>0$ and let $q\geq 1$ be a positive integer. We stress that there exists $\eta=\eta(\epsilon)>0$ such that for every $m\geq \ell \geq 2q\epsilon^{-1}$ and every $\cH_t^m$-stopping time $\alpha$, we have:
\begin{equation}\label{Eq:BoundStopTime}
\bbP\Big( \big| |\hat{\Pi}_{0,\alpha}^{[m]}([i])| - |\hat{\Pi}_{0,\alpha}^{[\ell]}([i])| \big| \geq \epsilon \,\Big| \, L_{\alpha}(i) \leq q \Big) \leq 2e^{-\eta(\ell-q)}.
\end{equation}
We postpone the proof of this bound below and carry on the proof of the lemma. Let $T>0$. We define a collection of $\cH_t^m$-stopping times to which this estimate can be applied:
\[\alpha_1^m := \inf\{t\geq 0: U_t > \ell^{-4}\} \wedge \ell^{-4} \wedge T,\]
and, recursively for every $k\geq 1$:
\begin{align*}
\alpha_{k+1}^m &:= \inf\{t\geq 0: U_t > U_{\alpha_k^m}+ \ell^{-4}\} \wedge (\alpha_k^m+\ell^{-4}) \wedge T,\\
\tilde{\alpha}_{k}^m &:= \inf\{t\geq \alpha_k^m: \big| |\hat{\Pi}_{0,t}^{[m]}([i])| - |\hat{\Pi}_{0,\alpha_k^m}^{[m]}([i])| \big| \geq 5\epsilon \} \wedge T.
\end{align*}
Let $k_\ell:= 2(c+T)\ell^4$ and observe that $\bbP(\alpha_{k_\ell}^m< T \,,\, U(\alpha_{k_\ell}^m) < c)=0$. Then, we set
\[ H_k^m := \Big| |\hat{\Pi}_{0,\alpha_k^m}^{[m]}([i])| - |\hat{\Pi}_{0,\alpha_k^m}^{[\ell]}([i])| \Big| \vee \Big| |\hat{\Pi}_{0,\tilde{\alpha}_k^m}^{[m]}([i])| - |\hat{\Pi}_{0,\tilde{\alpha}_k^m}^{[\ell]}([i])| \Big|, \]
and we have
\begin{align*}
\bbP(\sup_{k\leq k_\ell} H_k^m \geq \epsilon \,;\, L_{\alpha_{k_l}^m}(i)\leq q) &\leq \bbP\Big(\underset{k\leq k_\ell}{\cup}\{H_k^m \geq \epsilon \,;\, L_{\alpha_{k}^m}(i)\leq q\}\Big)\\
&\leq \sum_{k\leq k_\ell}\bbP\big(H_k^m \geq \epsilon \,|\, L_{\alpha_{k}^m}(i)\leq q\big)\\
&\leq 8(c+T)\ell^4e^{-\eta (\ell-q)},
\end{align*}
where we have bounded $\bbP(L_{\alpha_{k}^m}(i)\leq q)$ by $1$ and we have used (\ref{Eq:BoundStopTime}). Now, we define
\[ K := \max_{k\leq k_\ell} \sup_{\alpha_k^m \leq t < \alpha_{k+1}^m} \Big| |\hat{\Pi}_{0,t}^{[\ell]}([i])| - |\hat{\Pi}_{0,\alpha_k^m}^{[\ell]}([i])| \Big|.\]
This corresponds to the r.v.~$K_1$ in~\cite{DK99}, in our context the r.v.~$K_2$ is not needed since there is no mutation. Observe that $K$ is bounded by the maximum over $k\leq k_\ell$ of $N(\alpha_k^m,\alpha_{k+1}^m)/\ell$, where $N(\alpha_k^m,\alpha_{k+1}^m)$ is the number of new individuals appeared among the $\ell$ first levels on the time interval $(\alpha_k^m,\alpha_{k+1}^m)$. Let us denote by $\zeta_i$ the number of individuals involved among the $\ell$ first levels at the reproduction time $t_i$ and let $p_i$ denote the frequency of the non-singleton block corresponding to this reproduction event. By definition of the stopping times, $\sum_{\alpha_k^m < t_i < \alpha_{k+1}^m} p_i^2 \leq \ell^{-4}$. We have $N(\alpha_k^m,\alpha_{k+1}^m)=\sum_{\alpha_k^m < t_i < \alpha_{k+1}^m}(\zeta_i-1)_+$, and $\bbP(\zeta_i=r\,|\,p_i)=\binom{l}{r} p_i^r(1-p_i)^{\ell-r}$ for every $r\in\{0,\ldots,\ell\}$. Using the inequalities
\begin{align*}
(z-1)_+^2 &\leq z(z-1) \;\;,\;\; (z-1)_+^3 \leq (z-1)_+^4 \leq 3z(z-1)(z-2)(z-3)+3z(z-1),
\end{align*}
and conditioning under the collection of points $(p_i)_i$, a simple calculation yields
\begin{align*}
\bbE[ N(\alpha_k^m,\alpha_{k+1}^m)^4 ] &\leq  \bbE\big[\sum_i(\zeta_i-1)_+^4\big]+ 4 \bbE\big[\sum_i(\zeta_i-1)_+^3\big]\,\bbE\big[\sum_i(\zeta_i-1)_+\big]  \\
&\;\;+ 6\, \bbE\big[\sum_i(\zeta_i-1)_+^2\big]^2 + 6\, \bbE\big[\sum_i(\zeta_i-1)_+^2\big]\,\bbE\big[\sum_i(\zeta_i-1)_+\big]^2\\
&\;\;+ \bbE\big[\sum_i(\zeta_i-1)_+\big]^4 \\
&\leq 84\,\ell^{-2}.
\end{align*}
Putting these arguments together, we get
\[ \bbP(K \geq 2\epsilon ) \leq \sum_{k\leq k_\ell} \frac{\bbE[ N(\alpha_k^m,\alpha_{k+1}^m)^4 ]}{2^4\epsilon^4\ell^4} \leq \frac{84 k_\ell}{2^4 \epsilon^4 \ell^4} \ell^{-2} \leq 11(c+T)\epsilon^{-4}\ell^{-2}.\]
On the event where $\sup_{k\leq k_\ell} H_k^m \leq \epsilon$ and $K \leq 2\epsilon$, we have necessarily for every $k\leq k_\ell$, $\tilde{\alpha}_{k}^m \geq \alpha_{k+1}^m$ and therefore $\sup_{t\in[\alpha_k^m,\alpha_{k+1}^m)}\big| |\hat{\Pi}_{0,t}^{[m]}([i])| - |\hat{\Pi}_{0,\alpha_k^m}^{[m]}([i])| \big| < 5\epsilon$. Consequently for all $m\geq \ell \geq 2q\epsilon^{-1}$:
\begin{align}\label{Eq:Boundml}
&\bbP\Big( \sup_{t\in[0,T]} \big| |\hat{\Pi}_{0,t}^{[m]}([i])| -|\hat{\Pi}_{0,t}^{[\ell]}([i])| \big| \geq 8\epsilon \,;\, U_T < c \,;\, L_T(i) \leq q\Big) \nonumber\\
\leq\; &\bbP\Big( \sup_{t\leq \alpha_{k_\ell}^m} \big| |\hat{\Pi}_{0,t}^{[m]}([i])| -|\hat{\Pi}_{0,t}^{[\ell]}([i])| \big| \geq 8\epsilon \,;\, U_T < c \,;\, L_T(i) \leq q \Big)\nonumber\\
\leq\; &\bbP(\sup_{k\leq k_\ell} H_k^m \geq \epsilon\,;\, L_{\alpha_{k_\ell}^m}(i) \leq q) + \bbP(K\geq 2\epsilon)\nonumber\\
\leq\; & 8(c+T)\ell^4e^{-\eta (\ell-q)} + 11(c+T)\epsilon^{-4}\ell^{-2}=:\delta_\ell.
\end{align}
Let us now show that
\[ \bbP\Big(\varlimsup_{\ell \rightarrow \infty} \underset{m\geq \ell}{\cup} \big\{ \sup_{t\in[0,T]} \big| |\hat{\Pi}_{0,t}^{[m]}([i])| -|\hat{\Pi}_{0,t}^{[\ell]}([i])| \big| > 16\epsilon \,;\, U_T < c \,;\, L_T(i) \leq q\big\}\Big) = 0.\]
Since our processes are c\`adl\`ag, this is equivalent to show that
\[ \bbP\Big(\varlimsup_{\ell \rightarrow \infty} \underset{m\geq \ell}{\cup} \underset{t\in[0,T]\cap\bbQ}{\cup} \big\{ \big| |\hat{\Pi}_{0,t}^{[m]}([i])| -|\hat{\Pi}_{0,t}^{[\ell]}([i])| \big| > 16\epsilon \,;\, U_T < c \,;\, L_T(i) \leq q \big\}\Big) = 0.\]
The exchangeability of the partitions $\hat{\Pi}_{0,t}$ ensures that $\bbP$-a.s.~for all $t\in[0,T]\cap\bbQ$, $|\hat{\Pi}_{0,t}^{[m]}([i])|\rightarrow|\hat{\Pi}_{0,t}([i])|$ as $m\rightarrow\infty$. Therefore the left hand side of the latter equation is bounded above by
\begin{align}\label{Eq:BoundUnifml}
&\bbP\big(\varlimsup_{\ell \rightarrow \infty} \underset{t\in[0,T]\cap\bbQ}{\cup} \big\{ \big| |\hat{\Pi}_{0,t}^{[\ell]}([i])| -|\hat{\Pi}_{0,t}([i])| \big| > 8\epsilon \,;\, U_T < c \,;\, L_T(i) \leq q \big\}\big) \nonumber\\
=\; & \bbP\big(\varlimsup_{\ell \rightarrow \infty} \underset{t\in[0,T]\cap\bbQ}{\cup}\varliminf_{m\rightarrow\infty} \big\{ \big| |\hat{\Pi}_{0,t}^{[\ell]}([i])| -|\hat{\Pi}_{0,t}^{[m]}([i])| \big| > 8\epsilon \,;\, U_T < c \,;\, L_T(i) \leq q \big\}\big)\nonumber\\
\leq\; & \bbP\big(\varlimsup_{\ell \rightarrow \infty} \varliminf_{m\rightarrow\infty} \underset{t\in[0,T]\cap\bbQ}{\cup} \big\{ \big| |\hat{\Pi}_{0,t}^{[\ell]}([i])| -|\hat{\Pi}_{0,t}^{[m]}([i])| \big| > 8\epsilon \,;\, U_T < c \,;\, L_T(i) \leq q \big\}\big)\nonumber\\
\leq\; & \bbP\big(\varlimsup_{\ell \rightarrow \infty} \varliminf_{m\rightarrow\infty}  \big\{ \sup_{t\in [0,T]}\big| |\hat{\Pi}_{0,t}^{[\ell]}([i])| -|\hat{\Pi}_{0,t}^{[m]}([i])| \big| > 8\epsilon \,;\, U_T < c \,;\, L_T(i) \leq q \big\}\big).
\end{align}
By (\ref{Eq:Boundml}), we have $\bbP\big(\varliminf_{m\rightarrow\infty}  \big\{ \sup_{t\in [0,T]}\big| |\hat{\Pi}_{0,t}^{[\ell]}([i])| -|\hat{\Pi}_{0,t}^{[m]}([i])| \big| > 8\epsilon \,;\, U_T < c \,;\, L_T(i) \leq q \big\}\big) \leq \delta_\ell$. Since $\sum_\ell\delta_\ell < \infty$, the Borel-Cantelli lemma shows that the right hand side of (\ref{Eq:BoundUnifml}) vanishes. Consequently, for $\bbP$-almost all $\omega\in\{U_T<c \,;\, L_T(i) \leq q\}$, there exists $\ell_0$ such that for all $m\geq \ell\geq \ell_0$
\[ \sup_{t\leq T} \big| |\hat{\Pi}_{0,t}^{[m]}([i])| -|\hat{\Pi}_{0,t}^{[\ell]}([i])| \big| \leq 16\epsilon.\]
Taking $\epsilon$ arbitrarily small, we deduce that for $\bbP$-almost all $\omega\in\{U_T<c \,;\, L_T(i) \leq q\}$, $(|\hat{\Pi}_{0,t}^{[m]}([i])|,t\in[0,T])_m$ is a Cauchy sequence in the space $\bbD([0,T],\bbR)$ endowed with the uniform norm. This space is complete, therefore this sequence converges. Taking $c$ and $q$ arbitrarily large, we deduce that $\bbP$-almost surely for all $i\geq 1$, the sequences $(|\hat{\Pi}_{0,t}^{[m]}([i])|,t\in[0,T])_m$ converge in $\bbD([0,T],\bbR)$ endowed with the uniform norm. This ensures that $\bbP$-almost surely for every $i\geq 1$, the process $(|\hat{\Pi}_{0,t}(i)|,t\in[0,T])= (|\hat{\Pi}_{0,t}([i])|,t\in[0,T]) - (|\hat{\Pi}_{0,t}([i-1])|,t\in[0,T])$ is a well-defined c\`adl\`ag process. Since $T$ can be taken arbitrarily large, the lemma follows.

\smallskip
It remains to prove (\ref{Eq:BoundStopTime}). Fix $t\geq 0$, $q \in [m]$ and $A\in\cH_t^m$. There exists a measurable map $f$ such that $A=\{f(|\hat{\Pi}_{0,s}^{[m]}([i])|, U_s , L_s(i) ;s \leq t)=1\}$. Let $\sigma$ be a permutation of $\bbN$ that maps $\{q+1,\ldots,m\}$ into itself and is the identity on $\{1,\ldots,q\}\cup\{m+1,m+2,\ldots\}$. For any given $\pi\in\ccP_\infty$, we define $\sigma(\pi)$ as the partition whose blocks are given by the subsets $\{\sigma^{-1}(j):j\in\pi(i)\},i\geq 1$. We first show that, conditionally given $\{L_t(i)=q\}\cap A$, the distribution of $\hat{\Pi}_{0,t}$ is invariant under $\sigma$. To that end, we define on the same probability space a $\Lambda$ flow of partitions $\hat{\Pi}'$ on $[0,t]$ - together with its associated r.v. $U', L'$ and the associated event $A':=\{f(|(\hat{\Pi}')_{0,s}^{[m]}([i])|, U_s' , L_s'(i) ;s \leq t)=1\}$ - such that almost surely $\{L_t(i)=q\}\cap A=\{L'_t(i)=q\}\cap A'$ and such that, on this event, $\sigma(\hat{\Pi}_{0,t})=\hat{\Pi}_{0,t}'$. This will ensure that on $\{L_t(i)=q\}\cap A$, the distribution of $\hat{\Pi}_{0,t}$ is invariant under $\sigma$. For any given $n\geq m$, we let $p$ be the random number of reproduction events affecting at least two levels among the $n$ first on the time interval $[0,t]$ and let $(t_1,\pi_1),\ldots,(t_p,\pi_p) \in [0,t]\times\ccP_\infty$ be these events ordered by increasing times. We also let $t_0=0$ and $t_{p+1}:=\infty$. Set $\sigma_p:=\sigma$. Following the proof of Lemma 4.3 of Bertoin~\cite{BertoinRandomFragmentation}, we define recursively $\sigma_j$ as the permutation of $\bbN$ s.t.~the $i$-th block of $\sigma_{j+1}(\pi_{j+1})$ is $\sigma_{j+1}^{-1}(\pi_{j+1}(\sigma_j(i)))$, for every $j\in\{0,1,\ldots,p-1\}$. For every $s\in(0,t]$, we let $j$ be the unique integer such that $s\in[t_j,t_{j+1})$ and we set
\[ \hat{\Pi}'_{s-,s}:=\sigma_j(\hat{\Pi}_{s-,s}). \]
This definition ensures that the asymptotic frequencies of the reproduction events are not modified, so $U'_s=U_s$ for all $s\in[0,t]$. Since $\sigma_p=\sigma$ is the identity on $\{m+1,m+2,\ldots\}$, we deduce that for every $j\leq p$, $\sigma_j$ is also the identity on $\{m+1,m+2,\ldots\}$, so that the definition of $\hat{\Pi}'$ does not depend on $n\geq m$. Now observe that, for every $j\in \{1,2,\ldots,p\}$, $\sigma_j$ is independent of $(\pi_1,\ldots,\pi_j)$. Since $(\pi_1^{[n]},\ldots,\pi_p^{[n]})$ are $p$ independent exchangeable random partitions of $[n]$, we deduce that the same holds for $(\sigma_1(\pi_1^{[n]}),\ldots,\sigma_p(\pi_p^{[n]}))$ and therefore $\hat{\Pi}'$ has the same distribution as $\hat{\Pi}$ on $[0,t]$. Since $\sigma$ is the identity on $\{1,\ldots,q\}$ and is a permutation of $\{q+1,\ldots,m\}$ into itself, we deduce that on the event $\{L_t(i)=q\}$, we have $|(\hat{\Pi}')_{0,s}^{[m]}([i])|=|\hat{\Pi}_{0,s}^{[m]}([i])|$ and $L'_s(i)=L_s(i)$ for all $s\in[0,t]$. This ensures that almost surely $\{L_t(i)=q\}\cap A=\{L'_t(i)=q\}\cap A'$. By construction, $\sigma(\hat{\Pi}_{0,t})=\hat{\Pi}'_{0,t}$. We have proved that, conditionally given $\{L_t(i)=q\}\cap A$, the distribution of $\hat{\Pi}_{0,t}$ is invariant under $\sigma$.\\
Let $\alpha$ be a discrete $\cH_t^m$-stopping time, that is, an $\cH_t^m$-stopping time that takes a countable collection of values. The above result ensures that conditionally given $L_{\alpha}(i)=q$, the distribution of $\hat{\Pi}_{0,\alpha}$ is invariant under $\sigma$. Using a classical approximation, this remains true if we remove the hypothesis that $\alpha$ is discrete. Therefore, conditionally given $L_{\alpha}(i)=q$, the random variables
\[\tun_{\big\{ q + 1 \,\in\,  \hat{\Pi}_{0,\alpha}^{[m]}([i])\big\}}, \tun_{\big\{ q + 2 \,\in\,  \hat{\Pi}_{0,\alpha}^{[m]}([i])\big\}}, \ldots, \tun_{\big\{ m \,\in\,  \hat{\Pi}_{0,\alpha}^{[m]}([i])\big\}}\]
are exchangeable. Fix $\epsilon>0$. We define $M_k:=k^{-1}\sum_{j=1}^k \tun_{\{ q + j \in  \hat{\Pi}_{0,\alpha}^{[m]}([i])\}}$. On the event $\{L_{\alpha}(i)=q\}$, we apply Lemma A.2 in~\cite{DK99}
which ensures the existence of a constant $\eta=\eta(\epsilon)>0$ such that
\[ \bbP\big( |M_{\ell-q} - M_{m-q}| \geq \epsilon/2 \,|\, L_{\alpha}(i)=q \big) \leq 2 e^{-\eta (\ell-q)} .\]
Since $m|\hat{\Pi}_{0,\alpha}^{[m]}([i])|=q+(m-q)M_{m-q}$ and $\ell|\hat{\Pi}_{0,\alpha}^{[\ell]}([i])|=q+(\ell-q)M_{\ell-q}$, we obtain
\[ \big| |\hat{\Pi}_{0,\alpha}^{[m]}([i])| - |\hat{\Pi}_{0,\alpha}^{[\ell]}([i])| \big| \leq q(\frac{1}{\ell}-\frac{1}{m}) + |M_{\ell-q} - M_{m-q}|.\]
Since $q \leq \frac{\ell\epsilon}{2}$, the first term on the right hand side is smaller than $\epsilon/2$ and therefore (\ref{Eq:BoundStopTime}) follows.
\end{proof}
\noindent To end the proof, we need to distinguish three cases.\\
1- \Fin. The process $(\hat{\Pi}_{t-r,t},r\geq 0)$ starts with infinitely many blocks, and immediately after time $0$, comes down from infinity. Note that this property holds a priori on an event of probability one that depends on $t$, but the cocycle property allows to assert that the coming down from infinity holds for all $t$ simultaneously on a same event of probability one. The jumps of $(\hat{\Pi}_{t-r,t},r\geq 0)$ are finitely many on any compact interval of $(0,\infty)$ since the jump rate of a coalescent with a finite number of blocks is finite. Thus $\bbP$-a.s. for all $s\in\bbR, t> 0$, there exists a rational value $q(\omega)=q \in (s,s+t)$ such that $\hat{\Pi}_{s,s+t}=\hat{\Pi}_{q,s+t}$ and the existence of asymptotic frequencies follows from the rational case. The limit
\begin{equation*}
\lim\limits_{\epsilon\downarrow 0}|\hat{\Pi}_{s+\epsilon,s+t}(i)|=|\hat{\Pi}_{s,s+t}(i)|,\ \forall i\in\mathbb{N}
\end{equation*}
is then obvious. Similarly, there exists a rational value $p(\omega)=p < s$ such that $\hat{\Pi}_{p,s+t}=\hat{\Pi}_{s-,s+t}$. It implies the existence of asymptotic frequencies for the latter along with the limit
\begin{equation*}
\lim\limits_{\epsilon\downarrow 0}|\hat{\Pi}_{s-\epsilon,s+t}(i)|=|\hat{\Pi}_{s-,s+t}(i)|,\ \forall i\in\mathbb{N}
\end{equation*}
The same kind of arguments apply to show the regularity when $t$ varies.\\
Finally the process $(\sum_{i\geq 1}|\hat{\Pi}_{s,s+t}(i)|,t\geq 0)$ is c\`adl\`ag on $(0,\infty)$ since at any time $t > 0$ only finitely many $|\hat{\Pi}_{s,s+t}(i)|$'s are non-null and since each of these are c\`adl\`ag processes in $t$.\\
2- \Disc\ and \Dust. On any compact interval of time, only a finite number of reproduction events choose any given level as the parent (recall that $\int_{(0,1)}\!\!u\,\nu(du)<\infty$), therefore $r\mapsto\hat{\Pi}_{t-r,t}(i)$ evolves at discrete times for any given $i\geq 1$. The arguments of the previous regime can therefore be applied by considering $\hat{\Pi}_{s,t}(i)$ instead of $\hat{\Pi}_{s,t}$ in order to show the regularities in frequency.\\
Showing that $(\sum_{i\geq 1}|\hat{\Pi}_{s,s+t}(i)|,t> 0)$ is c\`adl\`ag on $(0,\infty)$ is more involved. This will be a consequence of a uniform bound on the block frequencies. More precisely we introduce for every $t\geq 0$
\[ N_{0,t}(n) := \sum_{s\in (0,t]}\#\{i\in [n]: v_s(i)=1\}.\]
One can show that the sequence of processes $(N_{0,t}(n)/n,t\geq 0)$ converges $\bbP$-a.s.~to a c\`adl\`ag process $(Y_{0,t},t\geq 0)$ for the uniform norm on $\bbD([0,\infty),\bbR)$. Then a simple argument based on the transitions of the lookdown process ensures that for every $i,n\geq 1$ and every $s,t,\epsilon \geq 0$
\[ \frac{1}{n}\,\big|\sum_{j=1}^{i}\#\{\hat{\Pi}_{s,s+t+\epsilon}(j)\cap[n]\}-\sum_{j=1}^i\#\{\hat{\Pi}_{s,s+t}(j)\cap[n]\}\big| \leq \frac{N_{0,s+t+\epsilon}(n)-N_{0,s+t}(n)}{n}\]
so that for every $i\geq 1$
\[ \Big|\sum_{j=1}^i|\hat{\Pi}_{s,s+t+\epsilon}(j)|-\sum_{j=1}^i|\hat{\Pi}_{s,s+t}(j)|\Big| \leq Y_{0,s+t+\epsilon}-Y_{0,s+t}.\]
Taking the limit as $i\rightarrow\infty$, one gets the asserted right continuity of $(\sum_{i\geq 1}|\hat{\Pi}_{s,s+t}(i)|,t>0)$ whenever $s\geq 0$. This can be extended to all $s\in\bbR$ by taking a countable union of events of probability one. A similar argument yields the existence of left limits.\\
3- \Inf. In this regime, all the partitions have infinitely many blocks and no singleton. Let us show that with probability one for every $s\in\bbQ$ the process $(\sum_{i\geq 1}|\hat{\Pi}_{s,s+t}(i)|,t>0)$ is constant equal to $1$. Let $\xi_0(i),i\geq 1$ be an independent sequence of i.i.d.~uniform$[0,1]$ r.v.~and let $(\Xi_t,t\geq 0)$ be the $\Lambda$-Fleming-Viot process constructed from the $\xi_0(i)$'s and the $\Lambda$ lookdown graph $\cP$, as in Theorem \ref{ThDK}. Recall that $\hat{\Pi}=J^{-1}(\cP)$. By construction, $\bbP$-almost surely for all $t\in\bbQ_+$ and all $i\geq 1$ we have $\Xi_t(\{\xi_0(i)\})=|\hat{\Pi}_{0,t}(i)|$. Moreover, by Lemma \ref{LemmaFreqCadlag}, $\bbP$-almost surely for all $i\geq 1$, the processes $(|\hat{\Pi}_{0,t}(i)|,t\geq 0)$ are c\`adl\`ag. Fix $\eta \in \bbQ_+^*$ and let $a_1,a_2,\ldots$ be the atoms of $\Xi_\eta$ in the decreasing order of their masses. The set $\{a_i,i\geq 1\}$ coincides with the set $\{\xi_0(i),i\geq 1\}$. For every $n\geq 1$ and $i\geq 1$, let $\alpha_{n,i}:=\inf\{t\geq \eta: \sum_{1\leq j \leq i} \Xi_{t}(\{a_j\}) \leq 1-2^{-n}\}$. This is a stopping time in the filtration of $\Xi$, and therefore $\alpha_n:=\sup_{i\geq 1}\alpha_{n,i}$ is also a stopping time in the filtration of $\Xi$. Using Proposition 3.2 in~\cite{DK99}, we deduce that $\xi_{\alpha_n}(i),i\geq 1$ is exchangeable on the event $\{\alpha_n<\infty\}$. By construction, we have $\bbP$-a.s.~$\alpha_n=\inf\{t\geq \eta: \sum_{i\geq 1}|\hat{\Pi}_{0,t}(i)| \leq 1-2^{-n}\}$. Furthermore for every integers $i,j$ we have the equivalence $\xi_{\alpha_n}(i)=\xi_{0}(j) \Leftrightarrow i\in\hat{\Pi}_{0,\alpha_n}(j)$ so that $\hat{\Pi}_{0,\alpha_n}$ is an exchangeable partition on the event $\{\alpha_n<\infty\}$. Since this partition does not have singleton blocks, it admits proper frequencies: $\bbP$-a.s.~$\sum_{i\geq 1}|\hat{\Pi}_{0,t}(i)|=1$ on the event $\{\alpha_n<\infty\}$. Consequently $\bbP(\alpha_n<\infty)=0$. Taking a sequence $\eta_p\downarrow 0$, we deduce that $\bbP$-almost surely $(\sum_{i\geq 1}|\hat{\Pi}_{0,t}(i)|,t>0)$ is constant equal to $1$, and therefore $\bbP$-almost surely for all rational $s$ the process $(\sum_{i\geq 1}|\hat{\Pi}_{s,s+t}(i)|,t>0)$ is constant equal to $1$.\\
We now prove the existence of asymptotic frequencies for $\hat{\Pi}_{s,s+t}$ when $s$ is not rational. Fix $i\geq 1$ and a rational value $p\in (s,s+t)\cap\bbQ$. For every $n\geq i$, we set
\begin{equation*}
\eta(n) := 1-\sum_{l=1}^{n}|\hat{\Pi}_{p,s+t}(l)|
\end{equation*}
From the property proved above $\eta(n) \rightarrow 0$ as $n\rightarrow\infty$. Let us denote by $j_1,j_2,\ldots$ the elements of the block $\hat{\Pi}_{s,p}(i)$. From the cocycle property, we have:
\begin{eqnarray}\label{EqLimInfSup}
\sum_{j_l\leq n}|\hat{\Pi}_{p,s+t}(j_l)| &\leq& \varliminf\limits_{m\rightarrow\infty} \frac{\#\big\{\hat{\Pi}_{s,s+t}(i)\cap[m]\big\}}{m}\\\nonumber
\varlimsup\limits_{m\rightarrow\infty} \frac{\#\big\{\hat{\Pi}_{s,s+t}(i)\cap[m]\big\}}{m} &\leq& \sum_{j_l\leq n}|\hat{\Pi}_{p,s+t}(j_l)| + \eta(n).
\end{eqnarray}
Letting $n$ go to infinity ensures the existence of $|\hat{\Pi}_{s,s+t}(i)|$. The same reasoning applies to $\hat{\Pi}_{s-,s+t}$ and $\hat{\Pi}_{s,s+t-}$. We now prove the regularity properties. Since $p$ is rational, we know that there exists $\epsilon_0(\omega)=\epsilon_0 > 0$ small enough so that
\begin{equation*}
\big||\hat{\Pi}_{p,s+t+\epsilon}(j)|-|\hat{\Pi}_{p,s+t}(j)|\big| \leq \frac{\eta(n)}{n},\; \forall \epsilon < \epsilon_0, \forall j \leq n
\end{equation*}
Therefore
\begin{equation*}
1-\sum_{l=1}^{n}|\hat{\Pi}_{p,s+t+\epsilon}(l)| \leq 2\eta(n),\; \forall \epsilon < \epsilon_0
\end{equation*}
Combined with Equation (\ref{EqLimInfSup}), this ensures the convergence of $|\hat{\Pi}_{s,s+t+\epsilon}(i)|$ towards $|\hat{\Pi}_{s,s+t}(i)|$ when $\epsilon$ goes to $0$. We get similarly the convergence when $t-\epsilon$ goes to $t-$. To prove the convergences when $s-\epsilon$ goes to $s-$ and $s+\epsilon$ goes to $s$, one remarks that there exists $\epsilon_0(\omega)=\epsilon_0 >0$ such that no reproduction events affecting at least two levels among $[n]$ fall in $(s-\epsilon_0,s)$ nor in $(s,s+\epsilon_0)$. Hence $\hat{\Pi}_{s-\epsilon,p}(i)\cap[n]$ and $\hat{\Pi}_{s+\epsilon,p}(i)\cap[n]$ do not vary whenever $\epsilon$ is in $(0,\epsilon_0)$. Similar arguments as above apply.\\
To end the proof, we need to check that $(\sum_{i\geq 1}|\hat{\Pi}_{s,s+t}(i)|,t>0)$ is constant equal to $1$ for all $s\in\bbR$. We have checked it for rational values $s$. Fix $s\in\bbR$ and $t>0$. Take an arbitrary $p\in(s,s+t)\cap\bbQ$. From the cocycle property and Fatou lemma, we know that for all $i\geq 1$ we have
\[ |\hat{\Pi}_{s,s+t}(i)| \geq \sum_{j\in\hat{\Pi}_{s,p}(i)}|\hat{\Pi}_{p,s+t}(j)|\]
so that
\[ \sum_{i\geq 1}|\hat{\Pi}_{s,s+t}(i)| \geq \sum_{i\geq 1}\sum_{j\in\hat{\Pi}_{s,p}(i)}|\hat{\Pi}_{p,s+t}(j)| = \sum_{j\geq 1}|\hat{\Pi}_{p,s+t}(j)|=1.\]
This concludes the proof.\cqfd

\subsection{Proof of Proposition \ref{PropRegularisation}}\label{SubsectionProofRegularisation}
The proof of the proposition relies on three lemmas.
\begin{lemma}\label{LemmaStoFlowAccumulate}
Consider the restriction $(\hat{\Pi}_{s,t},s\leq t\in\mathbb{Q})$ of the flow of partitions to its rational marginals. Then there exists an event $\Omega_{\hat{\Pi}}$ of probability $1$ on which:\begin{itemize}
\item $\forall r < s < t \in \bbQ,\; \hat{\Pi}_{r,t} = \Coag(\hat{\Pi}_{s,t},\hat{\Pi}_{r,s})$
\item For every $n\geq 1$ and every $l\geq 1$, there exists $m_0\geq 1$ and $k\geq 0$ s.t.~for every $m\geq m_0$ the partitions $\hat{\Pi}^{[n]}_{-l+\frac{i-1}{2^m}2l,-l+\frac{i}{2^m}2l}, i\in[2^m]$ have at most one non-singleton block and the number of these partitions that differ from $\tO_{[n]}$ is equal to $k$.
\end{itemize}
\end{lemma}
\noindent This lemma implies that almost surely the trajectories of $(\hat{\Pi}_{s,t},s\leq t\in\mathbb{Q})$ are deterministic flows of partitions.
\begin{proof}
First, one has
\begin{equation}\label{EquationCocycleSimult}
\mathbb{P}\Big(\hat{\Pi}_{r,t} = \Coag(\hat{\Pi}_{s,t},\hat{\Pi}_{r,s}), \forall r < s < t \in \mathbb{Q}^3\Big) = 1
\end{equation}
The second assertion is trivially satisfied by a stochastic flow of partitions $\hat{\Pi}^{\cal P}$ defined from a $\Lambda$ lookdown graph ${\cal P}$. Using Equation (\ref{EquationCocycleSimult}), the fact that the finite-dimensional distributions of $\hat{\Pi}$ and $\hat{\Pi}^{\cal P}$ are equal and the fact that the second assertion only relies on a countable set of marginals, this assertion for $\hat{\Pi}$ follows.
\end{proof}
We now define for every $s\leq t\in \mathbb{R}$ the partition $\tilde{\hat{\Pi}}_{s,t}$ on the event $\Omega_{\hat{\Pi}}$ as follows.
\begin{lemma}\label{LemmaDefModif}
On the event $\Omega_{\hat{\Pi}}$, the following random partition is well-defined.
\begin{equation}
\tilde{\hat{\Pi}}_{s,t} := \begin{cases}\hat{\Pi}_{s,t}&\mbox{ if }s,t \in \bbQ, s < t\\
\tO_{[\infty]}&\mbox{ if }s=t\\
\lim\limits_{v \downarrow t, v \in \mathbb{Q}}\hat{\Pi}_{s,v}&\mbox{ if }s \in \mathbb{Q}, t \notin \bbQ\\
\lim\limits_{r \downarrow s, r \in \mathbb{Q}}\hat{\Pi}_{r,t}&\mbox{ if }s \notin\bbQ, t \in \mathbb{Q}\\
\Coag(\tilde{\hat{\Pi}}_{q,t},\tilde{\hat{\Pi}}_{s,q})&\mbox{ for any arbitrary rational }q \in (s,t)\mbox{ if }s,t \notin\mathbb{Q}
\end{cases}
\end{equation}
Furthermore, for every $r < s < t$, $\tilde{\hat{\Pi}}_{r,t} = \Coag(\tilde{\hat{\Pi}}_{s,t},\tilde{\hat{\Pi}}_{r,s})$.
\end{lemma}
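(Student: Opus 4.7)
The plan is to leverage two ingredients provided by Lemma \ref{LemmaStoFlowAccumulate} on $\Omega_{\hat{\Pi}}$: the cocycle identity for all \emph{rational} triples $r<s<t$, and the local triviality that, for each real $s$ and each $n$, one has $\hat{\Pi}^{[n]}_{p,q}=\tO_{[n]}$ for all rationals $p,q$ in a small one-sided neighbourhood of $s$. Combined with the Lipschitz continuity of $\Coag$ on $(\mathscr{P}_\infty,d_{\mathscr{P}})$, these will reduce the whole statement to manipulations on countably many rational marginals.

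The first step is to observe a unified representation
\begin{equation*}
\tilde{\hat{\Pi}}_{s,t}=\lim_{r\downarrow s,\,v\downarrow t}\hat{\Pi}_{r,v}\qquad (r,v\in\mathbb{Q}),
\end{equation*}
where $r=s$ (resp.\ $v=t$) if $s\in\mathbb{Q}$ (resp.\ $t\in\mathbb{Q}$) and otherwise $r>s$ (resp.\ $v>t$). Existence of this joint limit in $d_{\mathscr{P}}$ follows by fixing $n$, invoking local triviality near $s$ and near $t$, and combining with the rational cocycle to conclude that $\hat{\Pi}^{[n]}_{r,v}$ is eventually constant as $(r,v)$ approaches $(s,t)$. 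Identifying this limit with the piecewise definition is a short case check: cases 1, 3, 4 amount to specialising variables; for case 5 one inserts an arbitrary rational intermediate $q\in(s,t)$ into $\hat{\Pi}_{r,v}=\Coag(\hat{\Pi}_{q,v},\hat{\Pi}_{r,q})$ and passes to the limit in each factor using continuity of $\Coag$, which both identifies the value with $\Coag(\tilde{\hat{\Pi}}_{q,t},\tilde{\hat{\Pi}}_{s,q})$ and simultaneously proves independence of the choice of $q$.

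Given the unified formula, the cocycle property for arbitrary real $r<s<t$ follows by approximation. Choose admissible rational sequences $r_k\to r$, $s_k\to s$, $v_k\to t$ from the right (equal to the target when the target is rational), with $r_k<s_k<v_k$ for $k$ large enough, and write the rational cocycle
\begin{equation*}
\hat{\Pi}_{r_k,v_k}=\Coag(\hat{\Pi}_{s_k,v_k},\hat{\Pi}_{r_k,s_k}).
\end{equation*}
The left-hand side converges to $\tilde{\hat{\Pi}}_{r,t}$, and by continuity of $\Coag$ the right-hand side converges to $\Coag(\tilde{\hat{\Pi}}_{s,t},\tilde{\hat{\Pi}}_{r,s})$, yielding the desired identity for all real triples simultaneously on $\Omega_{\hat{\Pi}}$.

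The main point of care is that the eventual stabilization on each $[n]$ must be available for all real points at once; this is already built into $\Omega_{\hat{\Pi}}$ thanks to the quantifier structure of Lemma \ref{LemmaStoFlowAccumulate}, so no additional null set is produced and the construction stays on the original almost sure event.
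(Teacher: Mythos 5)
Your proof is correct, and it rests on exactly the same ingredients as the paper's: the rational cocycle identity and the one-sided local triviality from Lemma \ref{LemmaStoFlowAccumulate}, together with the (Lipschitz) continuity of $\Coag$ on $(\mathscr{P}_\infty,d_{\mathscr{P}})$. The organizational difference is that you introduce a single unified representation $\tilde{\hat{\Pi}}_{s,t}=\lim_{r\downarrow s,\,v\downarrow t}\hat{\Pi}_{r,v}$ over rational $(r,v)$, whereas the paper treats the mixed rational/irrational cases first, then defines the doubly irrational case through $\Coag(\tilde{\hat{\Pi}}_{q,t},\tilde{\hat{\Pi}}_{s,q})$ and checks independence of $q$ and the all-irrational cocycle by two explicit applications of the associativity identity (\ref{EqCoagAssociativity}). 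Your route buys a cleaner endgame: independence of $q$ is automatic because the value is identified with a $q$-free limit, and the cocycle for an arbitrary real triple $r<s<t$ follows from one passage to the limit in the rational identity $\hat{\Pi}_{r_k,v_k}=\Coag(\hat{\Pi}_{s_k,v_k},\hat{\Pi}_{r_k,s_k})$, with the middle sequence $s_k\downarrow s$ serving simultaneously as right endpoint of the first factor and left endpoint of the second (both of which require approach from the right, so this is consistent). The price is that the joint eventual constancy of $\hat{\Pi}^{[n]}_{r,v}$ near $(s,t)$ must be checked; this does hold, by sandwiching two pairs between a common refinement and using that coagulating with a partition whose restriction to $[n]$ is $\tO_{[n]}$, on either side, does not change the restriction to $[n]$. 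Two cosmetic points: the diagonal $s=t$ should be excluded from the unified formula and kept as the separate convention $\tO_{[\infty]}$, as in the statement; and all of this takes place on $\Omega_{\hat{\Pi}}$ with no new null set, exactly as you note.
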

\begin{proof}
We work on the event $\Omega_{\hat{\Pi}}$ throughout this proof. Recall the cocycle property together with the left and right regularity properties satisfied by the flow restricted to its rational marginals. Fix $s \in \mathbb{Q}$ and let us prove the existence of a limit for $\hat{\Pi}_{s,v}$ when $v$ is rational and goes to a given irrational value $t \in (s,\infty)$. Fix $n\geq 1$. From Lemma \ref{LemmaStoFlowAccumulate} there exists $\epsilon=\epsilon(\omega) > 0$ such that for all rational values $p,q$ in $(t,t+\epsilon)$, $\hat{\Pi}_{p,q}^{[n]}=\tO_{[n]}$. Combined with the cocycle property on rational marginals, this ensures that $v\mapsto \hat{\Pi}_{s,v}^{[n]}$ is constant whenever $v\in(t,t+\epsilon)\cap\bbQ$. The existence of the limit follows. A similar argument shows the existence of a limit for $\hat{\Pi}_{v,t}$ when $v$ is rational and goes to an irrational value $s$ and $t$ is a given rational value.\\
Fix $r < s < t$. If all three are rational, the corresponding cocycle property holds since we are on $\Omega_{\hat{\Pi}}$. Now suppose that either $s$ is rational or both $r$ and $t$ are rational, then we stress that the corresponding cocycle property still holds. Indeed, take a limiting sequence of rational values for which the cocycle property holds and then use the continuity of the coagulation operator (see Lemma 4.2 in Bertoin~\cite{BertoinRandomFragmentation}).\\
Finally, suppose that $s,t \notin \mathbb{Q}$. To verify that our definition of $\tilde{\hat{\Pi}}_{s,t}$ makes sense, we need to show that $\Coag(\tilde{\hat{\Pi}}_{q,t},\tilde{\hat{\Pi}}_{s,q})$ does not depend on the value $q \in (s,t)\cap\mathbb{Q}$. Consider two such values $q,q' \in (s,t)\cap\mathbb{Q}$, suppose that $q < q'$ and use the associativity of the coagulation operator (see Lemma 4.2 in~\cite{BertoinRandomFragmentation}) to obtain
\begin{eqnarray*}
\Coag\big(\tilde{\hat{\Pi}}_{q',t},\tilde{\hat{\Pi}}_{s,q'}\big) \!\!\!&=&\!\!\! \Coag\big(\tilde{\hat{\Pi}}_{q',t},\Coag(\tilde{\hat{\Pi}}_{q,q'},\tilde{\hat{\Pi}}_{s,q})\big)\\\!\!\!&=&\!\!\! \Coag\big(\Coag(\tilde{\hat{\Pi}}_{q',t},\tilde{\hat{\Pi}}_{q,q'}),\tilde{\hat{\Pi}}_{s,q}\big)= \Coag\big(\tilde{\hat{\Pi}}_{q,t},\tilde{\hat{\Pi}}_{s,q}\big)
\end{eqnarray*}
Thus, the definition of $\tilde{\hat{\Pi}}_{s,t}$ does not depend on $q \in (s,t)$.\\
Finally, consider three irrational $r < s < t$, and two rational values $q,q'$ such that $q \in (r,s)$ and $q' \in (s,t)$.
\begin{eqnarray*}
\Coag\big(\tilde{\hat{\Pi}}_{s,t},\tilde{\hat{\Pi}}_{r,s}\big)&=&\Coag\big(\Coag(\tilde{\hat{\Pi}}_{q',t},\tilde{\hat{\Pi}}_{s,q'}),\Coag(\tilde{\hat{\Pi}}_{q,s},\tilde{\hat{\Pi}}_{r,q})\big)\\
&=&\Coag\big(\tilde{\hat{\Pi}}_{q',t},\Coag(\tilde{\hat{\Pi}}_{s,q'},\Coag(\tilde{\hat{\Pi}}_{q,s},\tilde{\hat{\Pi}}_{r,q}))\big) = \tilde{\hat{\Pi}}_{r,t}
\end{eqnarray*}
This concludes the proof.
\end{proof}
On the complement of $\Omega_{\hat{\Pi}}$, set any arbitrary value to $\tilde{\hat{\Pi}}_{s,t}$.
\begin{lemma}\label{JumpsLDGraph}
The collection of partitions $\tilde{\hat{\Pi}}$ is a modification of $\hat{\Pi}$, that is, for every $s \leq t$, a.s. $\tilde{\hat{\Pi}}_{s,t}=\hat{\Pi}_{s,t}$. Furthermore, for each $\omega \in \Omega_{\hat{\Pi}}$, $\tilde{\hat{\Pi}}(\omega)$ is a deterministic flow of partitions.
\end{lemma}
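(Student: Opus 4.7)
The plan is to split the lemma into its two assertions. For the modification claim, I would work through the five clauses in the definition of $\tilde{\hat{\Pi}}_{s,t}$. The rational-rational case is trivial on $\Omega_{\hat{\Pi}}$, and the diagonal case $s=t$ reduces to checking $\hat{\Pi}_{s,s} = \tO_{[\infty]}$ almost surely, which follows from the stationarity of a flow of partitions combined with $\hat{\Pi}_{0,0} = \tO_{[\infty]}$ in Definition \ref{DefStoFoP}.

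For the mixed case $s \in \mathbb{Q}$, $t \notin \mathbb{Q}$, I would fix a rational sequence $v_n \downarrow t$ and invoke the cocycle property $\hat{\Pi}_{s,v_n} = \Coag(\hat{\Pi}_{t,v_n}, \hat{\Pi}_{s,t})$, which holds almost surely for each fixed triplet (and hence simultaneously on a countable intersection of full-measure events as $n$ varies). Stationarity together with the right regularity at zero in Definition \ref{DefStoFoP} gives $\hat{\Pi}_{t,v_n} \to \tO_{[\infty]}$ in probability, so almost surely along a subsequence. Lipschitz continuity of $\Coag$ then yields $\hat{\Pi}_{s,v_{n_k}} \to \hat{\Pi}_{s,t}$ almost surely, and this limit coincides with the pathwise limit defining $\tilde{\hat{\Pi}}_{s,t}$ on $\Omega_{\hat{\Pi}}$. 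The symmetric case $s \notin \mathbb{Q}$, $t \in \mathbb{Q}$ is analogous. The doubly-irrational case reduces to the previous ones by inserting a rational $q \in (s,t)$ and applying the almost sure cocycle property to the fixed triplet $(s,q,t)$.

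For the deterministic-flow claim on $\Omega_{\hat{\Pi}}$, the pathwise cocycle property was already obtained in Lemma \ref{LemmaDefModif}, so only the left and right regularities remain to be verified. I would fix $s \in \mathbb{R}$ and $n \in \mathbb{N}$, and use Lemma \ref{LemmaStoFlowAccumulate} to produce $\epsilon(\omega) > 0$ such that $\hat{\Pi}^{[n]}_{p,q} = \tO_{[n]}$ whenever $p,q \in [s,s+\epsilon) \cap \mathbb{Q}$. Going through the five clauses once more, one checks that $\tilde{\hat{\Pi}}^{[n]}_{r,t} = \tO_{[n]}$ for all $r,t \in [s,s+\epsilon)$: when either coordinate is irrational, the defining limit through rationals is eventually $\tO_{[n]}$, and in the doubly-irrational case the coagulation of two trivial partitions is trivial. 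Right regularity follows by letting $n$ vary, and the analogous statement of Lemma \ref{LemmaStoFlowAccumulate} on the left-hand side yields left regularity in exactly the same way.

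The main bookkeeping obstacle is that the cocycle and right-continuity properties in Definition \ref{DefStoFoP} are stated only in an a.s.\ sense, triplet by triplet, so some care is required to invoke them only for fixed $(s,q,t)$ or countable sequences thereof, and to extract subsequences when passing from convergence in probability to almost sure convergence. Once this is handled consistently inside $\Omega_{\hat{\Pi}}$, the Lipschitz continuity of $\Coag$ and the non-accumulation of elementary reproduction events from Lemma \ref{LemmaStoFlowAccumulate} carry the argument through without further subtlety.
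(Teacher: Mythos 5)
Your proof is correct and follows the same overall decomposition as the paper (case analysis over the five clauses defining $\tilde{\hat{\Pi}}_{s,t}$, then the regularity of the trajectories via Lemma \ref{LemmaStoFlowAccumulate}), but it diverges from the paper in how the modification property is established off the rational grid. For $s\in\mathbb{Q}$ and $t\notin\mathbb{Q}$ the paper observes that $(\tilde{\hat{\Pi}}_{s,t},t\geq s)$ is c\`adl\`ag and appeals to the Feller property of the semigroup of $(\hat{\Pi}_{s,t},t\geq s)$ to identify it a.s.\ with a c\`adl\`ag modification; you instead argue directly from the a.s.\ cocycle identity $\hat{\Pi}_{s,v_n}=\Coag(\hat{\Pi}_{t,v_n},\hat{\Pi}_{s,t})$ along a fixed rational sequence $v_n\downarrow t$, the convergence in probability of small increments to $\tO_{[\infty]}$, and the $1$-Lipschitz continuity of $\Coag$. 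This is more elementary and self-contained, since the Feller property is only asserted parenthetically in the paper; the price is the subsequence extraction you correctly flag. Likewise, for irrational $s$ the paper runs a single probability estimate letting a rational $q\downarrow s$ to cover both rational and irrational $t$ at once, whereas you first settle the two mixed cases and then obtain the doubly-irrational case by composing them through the a.s.\ cocycle at a fixed triplet $(s,q,t)$; both routes are sound, and yours makes the logical dependence between the cases more transparent. The treatment of the diagonal and of the left/right regularity coincides with the paper's.
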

\begin{proof}
Fix $s\in\bbQ$. For every $t\in\bbQ_+$, $\tilde{\hat{\Pi}}_{s,s+t}=\hat{\Pi}_{s,s+t}$ on the event $\Omega_{\hat{\Pi}}$, so it holds a.s. We know that the process $(\hat{\Pi}_{s,s+t},t \geq 0)$ admits a c\`adl\`ag modification from Lemma \ref{LemmaMarkovFoP}. Almost surely for all $t\in\bbQ$ the $t$-marginals of this modification coincide with $\hat{\Pi}_{s,s+t}$ and also with $\tilde{\hat{\Pi}}_{s,s+t}$. Since the process $(\tilde{\hat{\Pi}}_{s,t},t \in [s,\infty))$ is also c\`adl\`ag, we deduce that almost surely it coincides with the modification. Thus it is itself a modification of $(\hat{\Pi}_{s,s+t},t \geq 0)$. Consequently for all $s\in\bbQ$ and all $t\geq 0$ we have almost surely $\tilde{\hat{\Pi}}_{s,s+t}=\hat{\Pi}_{s,s+t}$.\\
Now suppose $s$ irrational, take $t > 0$ and fix $n\in\mathbb{N}$. We have for all $q \in (s,s+t)\cap\mathbb{Q}$
\begin{equation*}
\mathbb{P}\Big(\tilde{\hat{\Pi}}^{[n]}_{s,s+t} = \hat{\Pi}^{[n]}_{s,s+t}\Big) \geq \mathbb{P}\Big(\tilde{\hat{\Pi}}^{[n]}_{q,s+t}=\hat{\Pi}^{[n]}_{q,s+t}\,;\,\hat{\Pi}^{[n]}_{s,s+t}=\Coag(\hat{\Pi}^{[n]}_{q,s+t},\hat{\Pi}^{[n]}_{s,q})\,;\,\hat{\Pi}^{[n]}_{s,q}=\tO_{[n]}\Big)
\end{equation*}
As $q\downarrow s$, $\mathbb{P}(\hat{\Pi}^{[n]}_{s,q}=\tO_{[n]}) \rightarrow 1$ by definition of a stochastic flow of partitions. The cocycle property of a stochastic flow of partitions together with the almost sure identity $\tilde{\hat{\Pi}}_{q,s+t}=\hat{\Pi}_{q,s+t}$ that we have already proved, ensures that the probability of the event on the r.h.s. tends to $1$ as $q \downarrow s$. Thus $\tilde{\hat{\Pi}}_{s,s+t} = \hat{\Pi}_{s,s+t}$ almost surely. Finally, when $t=0$ we know that $\hat{\Pi}_{s,s} = \tO_{[\infty]}$ almost surely by definition. Therefore, $\tilde{\hat{\Pi}}$ is a modification of $\hat{\Pi}$.\\
We need to verify that for all $\omega\in\Omega_{\hat{\Pi}}$, $\tilde{\hat{\Pi}}(\omega)$ is a deterministic flow of partitions. The cocycle property was proved in the preceding lemma. Let us show the right regularity. Fix $s\in\mathbb{R}$ and $n\in\mathbb{N}$. Recall that there exists $\epsilon=\epsilon(\omega) > 0$ such that for all rational $p<q \in (s,s+\epsilon)$, $\tilde{\hat{\Pi}}^{[n]}_{p,q} = \tO_{[n]}$. Letting $p \downarrow s$, we get $\tilde{\hat{\Pi}}^{[n]}_{s,q} = \tO_{[n]}$ for all $q \in (s,s+\epsilon)\cap\mathbb{Q}$. Similarly for all $r\in(s,s+\epsilon)$, we have $\tilde{\hat{\Pi}}^{[n]}_{q,r} = \tO_{[n]}$ as soon as $q \in (s,r)$. Using the fact that $\tilde{\hat{\Pi}}^{[n]}_{s,r} = \Coag(\tilde{\hat{\Pi}}^{[n]}_{q,r},\tilde{\hat{\Pi}}^{[n]}_{s,q})$ we get that $\tilde{\hat{\Pi}}^{[n]}_{s,r} = \tO_{[n]}$ for all $r\in(s,s+\epsilon)$. This in turn implies that $\tilde{\hat{\Pi}}^{[n]}_{s,r} \rightarrow \tilde{\hat{\Pi}}^{[n]}_{s,s}$ as $r\downarrow s$ and the right regularity is proved. The left regularity is obtained similarly.
\end{proof}

\subsection{Proof of Equation (\ref{EqBoundRate})}\label{AppendixProofBoundRate}
Consider the set $S:=\{1,\ldots,\lfloor an\rfloor+l-1\}$. Observe that
\[\sum_{i=2}^{(\lfloor an\rfloor+l+1-n)\wedge n}\binom{n}{i}\binom{\lfloor an\rfloor+l-n-1}{\lfloor an\rfloor+l+1-n-i}\]
is the number of combinations with $\lfloor an\rfloor+l+1-n$ elements among $S$ with at least $2$ elements among $\{1,\ldots,n\}$. This number is greater than the number of combinations with $\lfloor an\rfloor+l+1-n$ elements among $S$ with the constraint that $1$ and $2$ are chosen. The latter is equal to $\binom{\lfloor an\rfloor+l-3}{\lfloor an\rfloor+l-n-1}$. Consequently we can bound the l.h.s.~of (\ref{EqBoundRate}) as follows.
\begin{eqnarray*}
&&\int_{[0,1)}\nu(dx)\sum_{l=0}^{\infty}\sum_{i=2}^{(\lfloor an\rfloor+l+1-n)\wedge n}\binom{n}{i}\binom{\lfloor an\rfloor+l-n-1}{\lfloor an\rfloor+l+1-n-i}x^{\lfloor an\rfloor+l-n+1}(1-x)^{n-1}\\
&\geq& \int_{(u',1)}\nu(dx)\sum_{l=0}^{\infty}x^{\lfloor an\rfloor+l-n+1}(1-x)^{n-1}\binom{\lfloor an\rfloor+l-3}{\lfloor an\rfloor+l-n-1}
\end{eqnarray*}
The binomial factor at the second line corresponds to the number of combinations with $\lfloor an\rfloor+l-n-1$ elements among the set $\{3,\ldots,\lfloor an\rfloor+l-1\}$. Splitting this last set into $\{3,\ldots,\lfloor an\rfloor-1\}$ and $\{\lfloor an\rfloor,\ldots,\lfloor an\rfloor+l-1\}$ we get that the last quantity is equal to
\begin{eqnarray*}
&&\!\!\!\!\!\!\int_{(u',1)}\!\!\!\nu(dx)\sum_{l=0}^{\infty}x^{\lfloor an\rfloor+l-n+1}(1-x)^{n-1}\\
&&\!\!\!\times\sum_{j=\lfloor an\rfloor-n-1}^{(\lfloor an\rfloor+l-n-1)\wedge(\lfloor an\rfloor-3)}\!\!\!\binom{\lfloor an\rfloor-3}{j}\binom{l}{\lfloor an\rfloor+l-n-1-j}\\
&=&\!\!\! \int_{(u',1)}\nu(dx)x^2\sum_{j=\lfloor an\rfloor-n-1}^{\lfloor an\rfloor-3}\binom{\lfloor an\rfloor-3}{j}x^j(1-x)^{\lfloor an\rfloor-3-j}\\
&&\!\!\!\times\sum_{l=j+n+1-\lfloor an\rfloor}^\infty\binom{l}{\lfloor an\rfloor+l-n-1-j}x^{\lfloor an\rfloor-n-1-j+l}(1-x)^{n-\lfloor an\rfloor+2+j}.
\end{eqnarray*}
Finally using the change of variable $p=\lfloor an\rfloor+l-n-1-j$ and setting $k:=n+1+j-\lfloor an\rfloor$, one gets
\begin{eqnarray*}
\sum_{l=j+n+1-\lfloor an\rfloor}^\infty\!\!\!\binom{l}{\lfloor an\rfloor+l-n-1-j}x^{\lfloor an\rfloor-n-1-j+l}(1-x)^{n-\lfloor an\rfloor+2+j}\\
=\frac{(1-x)^{1+k}}{k!}\sum_{p=0}^{\infty}\frac{(p+k)!}{p!}x^p.\end{eqnarray*}
A simple induction on $k$ shows that this last quantity is equal to 1. This ends the proof.

\subsection{Calculations on regular variation}
\label{ProofRegularVarLambda}
Suppose that $\Psi$ is regularly varying at $+\infty$ with index $\alpha\in(1,2]$. Let us prove that the map $v$ defined in the proof of Theorem \ref{ThCDI} is itself regularly varying at $0+$ with index $-1/(\alpha-1)$. We have
\begin{equation*}
\frac{v(t)}{\Psi(v(t))} = \int_0^t \frac{\Psi(v(s))-v(s)\,\Psi'(v(s))}{\big(\Psi(v(s))\big)^2}\,v'(s)\,  ds\underset{t\rightarrow 0}{\sim} (\alpha-1)t
\end{equation*}
In the above calculations, we use the identity $v'(s)=-\Psi(v(s))$ and the regular variation of $\Psi$ at $\infty$ that ensures the convergence $u\Psi'(u)/\Psi(u) \rightarrow \alpha$ as $u\rightarrow\infty$ thanks to Theorem 2 in Lamperti~\cite{Lamperti58}. Therefore we have proved that
$$ \frac{t\,v'(t)}{v(t)} \underset{t\rightarrow 0}{\longrightarrow} \frac{-1}{\alpha-1} $$
This identity implies, thanks to Theorem 2 in~\cite{Lamperti58} again, that $v$ is regularly varying at $0+$ with index $-1/(\alpha-1)$.\cqfd





\ACKNO{This is part of my PhD thesis. I would like to thank my supervisors, Julien Berestycki and Amaury Lambert, for some fruitful discussions throughout this work. I am grateful to Gr\'egory Miermont for his useful remarks that helped me to improve the presentation of this paper. I would also like to express my gratitude to the anonymous referees for their very careful reading and their numerous suggestions and remarks.}


\end{document}